\documentclass[a4paper, 12pt]{amsart}
\parindent0pt
\parskip2pt

\usepackage{amsfonts,amsmath,amssymb}
\usepackage{amssymb}
\usepackage{graphicx}
\usepackage{color}

\oddsidemargin 5pt
\evensidemargin 5pt
\textwidth16.cm
\textheight21.5cm
\flushbottom

\usepackage{ textcomp }
\mathchardef\emptyset="001F

\theoremstyle{plain}

\newtheorem{theorem}{Theorem}[section]
\newtheorem{lemma}[theorem]{Lemma}
\newtheorem{proposition}[theorem]{Proposition}
\newtheorem{corollary}[theorem]{Corollary}
\newtheorem{remark}[theorem]{Remark}
\newtheorem{definition}[theorem]{Definition}

\numberwithin{equation}{section}

\newcommand{\eps}{\epsilon}

\newcommand{\R}{{\mathbb R}}

\newcommand{\Z}{{\mathbb Z}}
\newcommand{\N}{{\mathbb N}}

\newcommand{\ZZ}{{\mathbb Z}}

\newcommand{\NN}{{\mathbb N}}

\newcommand{{\EE}}{{\mathbb E}}

\newcommand{\RR}{{\mathbb R}}

\newcommand{\T}{{\mathbb T}}

\newcommand{\TT}{{\mathbb T}}

\newcommand{\dive}{{\rm div}}

\renewcommand{\div}{\hbox{{\rm div}}}
\newcommand{\grad}{\hbox{{\rm grad}}}

\newcommand{\ut}{\widetilde{u}_{\e}}

\newcommand{\bfe}{{\bf e}}

\newcommand{\caE}{{\mathcal E}}

\newcommand{\caM}{{\mathcal M}}
\newcommand{\caO}{{\mathcal O}}

\newcommand{\caA}{{\mathcal A}}

\newcommand{\caF}{{\mathcal F}}

\def\2s{\stackrel{\rm 2s}{\rightharpoonup}}

\def\ut{{\underline t}}

\def\uv{{\underline v}}

\def\urho{{\underline \rho}}


\title[]
{Crime Pays\,;\
Homogenized Wave Equations for Long Times}

\author[Gr\'egoire Allaire]{Gr\'egoire Allaire$^{1}$}
\author[Agnes Lamacz]{Agnes Lamacz$^{2}$}
\author[Jeffrey Rauch]{Jeffrey Rauch$^{3}$}

\begin{document}
\baselineskip3.15ex
\vskip .3truecm

\begin{abstract}  This article examines the accuracy for large times of  asymptotic
expansions from periodic homogenization of wave equations.
As usual, $\eps$ denotes the small period of the 
coefficients in the wave equation. 
We first
prove that the standard two scale asymptotic expansion 
provides an accurate approximation of the exact solution for  times $t$ 
of order $\eps^{-2+\delta}$ for any $\delta>0$. 
Second, for longer times, we show that a different algorithm, 
that is called criminal because it mixes different powers of $\eps$, 
yields an approximation 
of the exact solution with error $O(\eps^N)$  for times 
$\eps^{-N}$ with $N$ as  large as one likes. 
The criminal algorithm involves high order homogenized equations 
that, 
in the context of the wave equation, were first proposed by Santosa and Symes and 
analyzed by Lamacz. 
The high order homogenized equations yield dispersive corrections
for  moderate wave numbers.
We give a systematic analysis for all time scales and 
all high order corrective  terms.

\small{
\vskip.3truecm
\noindent  {\bf Key words.} homogenization, secular growth, 
dispersive effects, asymptotic crimes, wave equations
\vskip.2truecm
\noindent  {\bf 2010 Mathematics Subject Classification:}

}
\end{abstract}

\maketitle
{
\small
\noindent
$^1$ Centre de Math\'ematiques Appliqu\'ees, \'Ecole Polytechnique, CNRS,
Universit\'e Paris-Saclay, 91128 Palaiseau, France.\\
Email: gregoire.allaire@polytechnique.fr\\
\noindent
\noindent
$^2$ Department of Mathematics, University of Duisburg-Essen,
45127 Essen, Germany.\\
Email: agnes.lamacz@uni-due.de\\
\noindent
\noindent
$^3$ Department of Mathematics, University of Michigan,
Ann Arbor 48109 MI, USA.\\
Email: rauch@umich.edu
}

\maketitle

\section{Introduction}

\subsection{Traditional homogenization, secular growth, and long times}

This paper studies the long time behavior of the
wave equation in an infinite periodic medium,
\begin{equation}
\label{eq:homprob}
\rho(x/\eps) \, \partial^2_t u^\eps \, - \, \big(\dive \, a(x/\eps)\, \grad\big)\, u^\eps 
\ =\
 f(t,x),
\qquad
u^\eps=f=0\ \ {\rm for}\ \ t<0\,,
\end{equation}
where $\rho, a$ are periodic functions.
The unknown $u^\eps$ is real valued as is $\rho\in L^\infty(\TT^d)$
while $a\in L^\infty(\TT^d)$ has values that are real symmetric matrices ($\TT^d$ is the unit torus).
The coefficients are postive definite in the sense that there is a 
constant $m_1>0$ so that for all $\xi\in \RR^d$,
$$
a(y)\xi\cdot \xi\ \ge\ m_1|\xi|^2,
\qquad
{\rm and}
\qquad
\rho(y)\ \ge \ m_1,
\qquad
{\rm for\ a.e.\ \ } y\in\TT^d \,.
$$
The source term $f$ is smooth with 
$\partial_{t,x}^\alpha f \in L^2(\RR^{1+d})$ for all $\alpha\in \NN^{d+1}$ 
and is supported in  $\{0\le t\le 1\}$ until Section \ref{sec.polynom}.

The motivation comes from the articles, in chronological order,
\cite{santosa}, \cite{Lam}, \cite{DLS}, \cite{AP2}, \cite{BG}
that describe the behavior of solutions on the very long 
time scale $t\sim 1/\eps^2$ and beyond (see also
the engineering literature, including \cite{andrianov}, \cite{fish}, 
and the numerical literature \cite{AGS}, \cite{AP}, \cite{AY}). 
The descriptions for these time scales use modifications 
of the traditional two scale homogenization {\it ansatz}.

The traditional {\it ansatz} is 
$U(\eps,t,x,x/\eps)$, where
\begin{equation}
\label{eq:utaylor}
U(\eps, t,x,y) \ \sim\
\sum_{n=0}^\infty
\,
\eps^n\,
u_n(t,x,y)\,,
\qquad
u_n(t,x,y)\quad {\rm periodic\ in \ } y\,.
\end{equation}
The right hand side  is a formal power series in $\eps$.  
No convergence is expected (see Appendix
\ref{sec:maxgrowth}). The sign $\sim$ represents equality  
in the sense of formal power series. 
The coefficient functions $u_n$ belong to the space of 
smooth functions of $(t,x,y)\in\RR\times\RR^d\times\TT^d$ 
supported in $t\ge 0$  that are periodic in $y$.

Section \ref{sec:2minusdelta} proves that the traditional
construction \eqref{eq:utaylor} yields a good approximations
on time intervals $0\le t\le C\,\eps^{-2+\delta}$
with $C$ as large and $0<\delta$ as small as one likes.
The classical approach \cite{BP}, \cite{BLP}, \cite{BFM}, \cite{JKO}, \cite{SP} 
proves that the ansatz \eqref{eq:utaylor} is a good
approximation on bounded time intervals.
It was first observed
by  Santosa and Symes in \cite{santosa}, and then proved in \cite{Lam} 
(see also \cite{DLS}, \cite{AP2}, \cite{BG}), that a different ansatz
that we call  {\it  criminal}, yields a good approximation for 
times of order $\eps^{-2}$.
In the elliptic setting, this criminal ansatz was first proposed 
by Bakhvalov and Panasenko \cite{BP}.

 To analyze the two scale ansatz \eqref{eq:utaylor}, each profile $u_n$ is written 
as the sum of its non oscillating contribution $\pi u_n$ and
its oscillating part $(I-\pi)u_n$, 
defined as
\footnote{This decomposition of the two scale hierarchy emphasizing the
projector $\pi$ follows modern developments in hyperbolic geometric optics,
see \cite{rauch}.} 
$$
u_n(t,x,y) \ =\ \pi u_n + (I-\pi)u_n,
\qquad
(\pi u_n )(t,x) \ :=\ 
\frac1{|\TT^d|}   \,
\int_{\TT^d} u_n(t,x,y)\ dy.$$
Introduce the traditional second order partial differential operators
\begin{equation}
\label{eq:operators}
\begin{aligned}
 \caA_{yy} & := 
 \dive_y \, a(y)\, \grad_y,
 \\  
  \caA_{xx} &:= 
 \dive_x \, a(y)\, \grad_x,
 \\ 
 \caA_{xy} &:= 
 \dive_x  \, a(y)\, \grad_y
+
 \dive_y \, a(y)\, \grad_x
 \,.
\end{aligned}
\end{equation}
Then
\begin{equation*}
\Big[
\rho(x/\eps) \, \partial^2_t \ -\ \dive\, a(x/\eps)\grad\Big]
U(\eps,t,x,x/\eps)
\ \sim\
 W(\eps,t,x,x/\eps)\,.
\end{equation*}
where $W$ is the formal Laurent series in $\eps$ computed from
\begin{equation}
\label{eq:defW}
 \bigg[
   \rho(y) \partial_t^2
    -
  \frac{1}
  {\eps^2}
  \mathcal{A}_{yy}
   - 
   \frac{1}
  {\eps}
  \mathcal{A}_{xy}
  - 
  \mathcal{A}_{xx}
  \bigg]
  U(\eps,t,x,y)
 \sim W(\eps,t,x,y)
 :=
  \sum_{n=-2}^\infty
 \eps^n \,w_n(t,x,y).
\end{equation}

In all constructions below (the classical one, as well as the new one 
proposed in this paper) the $u_n$ are chosen so that up to some precision, 
one has $W=f$. Since the source term $f(t,x)$ does not depend on $y$, it 
has no oscillating part, $(I-\pi)f=0$, and thus it is natural to seek 
the $u_n$ so that $(I-\pi)w_n=0$. The formal power series $U$, satisfying (\ref{eq:defW}), 
for which $(I-\pi)w_n=0$ have a very rigid structure that steers our analysis. 
For $k\geq1$, we introduce differential operators (see Definition \ref{def.chi-k} for details)
\begin{equation}
\label{eq:chi-k}
\chi_k(y,\partial_{t},\partial_x)\ =\ 
\sum_{\alpha\in \NN^{d+1}, \, |\alpha|=k}
c_{\alpha,k}(y)\,\partial_{t,x}^\alpha.
\end{equation}
The coefficients $c_{\alpha,k}$ are solutions of periodic cell problems. 
The coefficients of the pure $x$ derivatives in (\ref{eq:chi-k}) are the classical
$k^{\rm th}$ order correctors in elliptic homogenization \cite{BP}, \cite{BLP}, \cite{SP}.
Theorem \ref{thm:osc} proves that the ansatz $U$, which yield profiles $w_n$ 
satisfying $(I-\pi)w_n=0$, 
are so that  $u_n$ satisfy,
\begin{equation}
\label{eq:radwanska}
\forall n\ge 0,\qquad
 (I-\pi) u_{n}  \ = \ \sum_{k=1}^{n} \chi_k(y,\partial_{t},\partial_x) \  \pi u_{n-k}\,.
\end{equation}
The oscillating part  is given in terms of the 
non-oscillating parts of lower order.

The second structural identity concerning the formal series $U$ 
satisfying $(I-\pi)w_n=0$ is a formula for $\pi w_n$ that
involves 
homogenized differential operators $a_k^*(\partial_t,\partial_x)$ 
with constant coefficients.  
The operator $a_2^*$ is the standard homogenized wave operator 
\cite{BP}, \cite{BLP}, \cite{BFM}, \cite{JKO}, \cite{SP}. 
For $k\geq3$, the $a_k^*$'s are called high order homogenized operators \cite{BP}. 
By establishing a combinatorial formula for the $a_{k}^*$'s, 
 Theorem \ref{thm:recur2}
 proves that the odd order homogenized operators vanish, 
$a_{2n+1}^*=0$. 
This is a classical result for $a_1^*=0$ and $a_3^*=0$ (see e.g. 
\cite{ABV} and references therein).
It was already known for all odd orders in the 
elliptic case \cite{SmCh}.
Theorem \ref{thm:recur1} proves that
\begin{equation}
\label{eq:mildred}
\forall n,\quad
(I-\pi)w_n=0, \ 
\qquad
\Longleftrightarrow
\qquad
\forall n,\quad
\pi w_n \ = \ \sum_{0\le 2j\le n}\,
a^*_{2j+2}(\partial_t,\partial_x)\ \pi  u_{n-2j}  \,.
\end{equation}
Equation \eqref{eq:mildred} expresses $\pi w_n$ in terms 
of $\pi u_m$ with $m\le n$ {\it and having 
the same parity as $n$.}  
This is  {\it   the leap frog structure}.   
 
The traditional algorithm \cite{BP}, \cite{BLP}, \cite{SP} sets $W=f$.
Equivalently,
$w_0 = \pi w_0=f$ and $\pi w_n=0$ for $n\ge 1$.  
The first yields the homogenized wave equation 
$a_2^*(\partial_t,\partial_x) \pi u_0 =f$ whose solution $u_0=\pi u_0$
has energy independent of $t$ for $t$ beyond the support of $f$. 
The leading profile $\pi u_0$ does not grow with time. 
The next equations, $\pi w_n=0$ for $n\ge 1$, lead to 
 wave equations for each $\pi u_n$ with a source term 
given in terms of the preceding  $\pi u_m$, $m<n$, with the same
parity as $n$. One finds $\pi u_n=0$ for $n$ odd.  
One can  quickly assess the rate of growth of the profiles $u_{2n}$ in time.  
This is called {\it secular growth} (see Section \ref{sec:secular}).     
For $u_2$ one has 
$a_2^*(\partial_t,\partial_x)\pi u_2 + a_4^*(\partial_t,\partial_x)\pi u_0 =0$, 
a wave equation for $\pi u_2$
with source that does not grow in time. Therefore $u_2$ cannot
grow faster than $t$. Continuing one finds that  $\pi u_4$ grows
no faster than $t^2$ and $\pi u_{2k}$ no faster than $t^k$.
The leap frog structure shows that $\pi u_n$ grows no faster
than $t^{n/2}$. Without the leap frog structure one would have
found $t^n$. The $(2n)^{\rm th}$ term in the ansatz \eqref{eq:utaylor} 
is of size $\eps^{2n}\, t^n$.
For times $t\sim 1/\eps^2$ the higher order terms can no longer
be understood as correction terms.  The slow secular growth from the leap frog
structure explains
why $t\sim 1/\eps^2$ is a critical time scale for the 
traditional expansion.

The secular growth estimate implies
Theorem \ref{thm:errorest} asserting 
that {\it for any $N,\delta>0$ choosing a sufficiently large number of 
terms in the traditional
ansatz \eqref{eq:utaylor}
guarantees that  the error is $O(\eps^N)$
for times
$t\le 1/\eps^{2-\delta}$}.

Appendix \ref{sec:beyond}
contains an example showing that the classical approximation is
not accurate for times $t\sim 1/\eps^{2+\delta}$ for any 
$\delta>0$.

\subsection{Asymptotic crimes and longer times}
\label{sec.crimintro}
To find approximate solutions for longer times 
we abandon the classical {\it ansatz} that requires
that $w_0=f$ and $w_n=0$ for $n\ne 1$.  In the residual
we do not set the coefficients of $\eps^n$ equal to zero.
This is  an {\it asymptotic crime}.  In addition to the 
motivating examples from homogenization theory,
asymptotic crimes have a long history in fluid dynamics
and geometric optics, see \cite{lannes}.
In order that the computations retain much of the structure
from the traditional {\it ansatz} we  demand that
$(1-\pi)w_n=0$ for all $n$.    
That yields \eqref{eq:radwanska} and \eqref{eq:mildred} and, in particular, 
the leading term is non oscillating, $v_0=\pi v_0$.
To emphasize the fact that the new profiles are 
not the  same as the  old ones we call them
$v_n$ and set
$$
V(\eps, t,x,y) 
\ \sim\
\sum_{n=0}^\infty
\,
\eps^n\,
v_n\,.
$$
Then \eqref{eq:mildred} implies that
the discrepancy $W-f=\pi(W-f)$ is 
equal to the sum of the lines,
\begin{equation}
\label{eq:hierarchy2.3-0}
\begin{aligned}
&\eps^0\big[a^*_2(\partial_t, \partial_x)\pi v_0\ -\ f\big]\ +\   
\cr
&\eps^1\big[a^*_2(\partial_t, \partial_x) \pi v_1 \big]\ +\ 
\cr
&\eps^2\big[a^*_2(\partial_t, \partial_x) \pi v_2  \ +\  a_4^*(\partial_t, \partial_x)\pi v_0\big]\ +\ 
\cr
&\eps^3\big[a^*_2(\partial_t, \partial_x) \pi v_3  \ +\  a_4^*(\partial_t, \partial_x)\pi v_1\big]\ +\ 
\cr
&\eps^4\big[a^*_2(\partial_t, \partial_x) \pi v_4  \  +\   a_4^*(\partial_t, \partial_x)\pi v_2 \ +\  a_6^*(\partial_t,\partial_x) \pi v_0\big]\ +
\cr
&\eps^5\big[a^*_2(\partial_t, \partial_x) \pi v_5  \  +\   a_4^*(\partial_t, \partial_x)\pi v_3 \ +\  a_6^*(\partial_t,\partial_x) \pi v_1\big]\ +\ \cdots\ .
\end{aligned}
\end{equation}
The problems of secular growth came from setting all the rows
equal to zero. 
That yields equations for the corrector terms
that have the preceding profiles as sources. The criminal
strategy requires only that the sum
of the  lines vanishes. That can be achieved setting
$\pi v_n=0$ for all $n>0$ and demanding that
\begin{equation}
\label{eq:julie}
\Big[
a_2^*(\partial_{t,x}) + \eps^2  a_4^*(\partial_{t,x}) + \eps^4 a_6^*(\partial_{t,x})
 +
\cdots
\Big] 
v_0  \ =\  f\,,
\qquad
v_0=0 \quad
{\rm for}
\quad
t<0\,.
\end{equation}
The coefficients in Equation \eqref{eq:julie} 
depend on $\eps$. To solve 
this equation with accuracy $O(\eps^N)$,
$v_0$ 
must depend on 
$\eps$,  $v_0=v_0(\eps, t,x)$.  
Including oscillatory correction terms, the approximation takes the new form
\begin{equation}
\label{eq:crimeansatz}
V(\eps, t,x,y)\ \sim\
\sum_{n=0}^{\infty}
\,
\eps^n\,
v_n(\eps,t,x,y)\,,
\qquad
v_n(\eps,t,x,y)\quad {\rm periodic\ in \ } y\,,
\end{equation}
where each profile depends on $\eps$.
The series is treated as a formal series in $\eps$ whose
coefficients are functions of several variables including $\eps$. 
The summand $\eps^nv_n$ is viewed as a term in $\eps^n$ 
although $v_n$  depends on $\eps$.

To construct the profile $v_0$,
\eqref{eq:julie}  is modified in several
ways.
  The first difficulty is that the terms
$\eps^{2j-2}\,a_{2j}^*(\partial_t,\partial_x)$ are typically
of order  $2j$ in $\partial_t$.   The more terms one
keeps the higher order is the equation in $\partial_t$. 
    The truncated operators
usually  define ill posed initial value problems.
The first thing that we do is perform a normal form transformation
that converts the operators $a_{2j}^*$ with $j\geq 2$ to operators in $\partial_x$ only.  
The normal form removes all the time derivatives other than those
in $a_2^*$.
In Section \ref{sec:elimination} it is proved that there are uniquely determined
homogeneous operators $R_{2j}(\partial_{t,x})$ and $\widetilde a_{2j}(\partial_x)$
of  degree $2j$
so that as formal power series in $\partial_t,\partial_x$,
\begin{equation}
\label{eq:R2j}
\Big[
1+\sum_{j=1}^\infty R_{2j}\big(\partial_{t,x}\big)
\Big]
\Big[
\sum_{j=1}^\infty a_{2j}^*\big(\partial_{t,x}\big)
\Big]
\ =\ 
a_2^*\big(\partial_{t,x}\big) \ + \
\sum_{j=2}^\infty\,
\widetilde a_{2j}\big(\partial_x\big)
\,.
\end{equation}
The operators $R_{2j}$ and $\widetilde a_{2j}$ are 
computable by a rapid recursive algorithm.
This step has no analogue in the elliptic context.
Multiplying \eqref{eq:julie} by $1+\sum_{j=1}^\infty R_{2j}\big(\eps\partial_{t,x}\big)$
yields the equivalent equation
\begin{equation}
\label{eq:disp5}
\Big[
a^*_2(\partial_{t,x} )
 \ +\
 \sum_{j=2}^{\infty}
 \eps^{2j-2}\
\widetilde a_{2j}(\partial_x)
 \Big]
 v_0(\eps,t,x) \ =\ 
 \Big[
 1+\sum_{j= 1}^\infty \eps^{2j} R_{2j}\big(\partial_{t,x}\big)
 \Big]
 f\,.
\end{equation}
One does not need an exact solution.   
The sums in 
 \eqref{eq:disp5}  are first truncated to finite sums.
  The corresponding 
  equation depends on the number of terms retained and 
  the unknown function is denoted
 $\uv_0^k$. The truncated equation of order $k$ is,
 with  $R^k:=\sum_{j=1}^kR_{2j}$,
\begin{equation}
\label{eq:disp}
\Big[
a^*_2(\partial_{t, x})
 \ +\
 \sum_{j=2}^{k+1}
 \eps^{2j-2}\
 \widetilde a_{2j}(\partial_x)
 \Big]
 \uv_0^k(\eps,t,x) =
 \Big[
 1+
 R^k\big(\eps\partial_{t,x}\big)\Big]f,
 \quad
 \uv_0^k=0 \ \ {\rm for}\ \ t<0\,.
\end{equation}

The initial value problem 
\eqref{eq:disp}  is usually ill  posed so does not define
a profile $\uv_0^k$.
For example, it is known \cite{COV2}, \cite{Lam}, \cite{ABV} 
that, at least when $\rho(y)$ is constant, the operator $\widetilde a_4$ 
has the wrong sign so that 
\eqref{eq:disp} is ill posed for $k=2$. 
Surprisingly, that is not a fatal flaw.

The idea to overcome this obstacle is the following. 
The correctors $\eps^{2j-2}\widetilde a_{2j}(\partial_x)$ added to
$a_2^*(\partial_t, \partial_x)$ are only small compared to $a_2^*$ when
applied to functions whose Fourier transform is supported where 
$\eps \xi$ is small ($\xi$ being the Fourier variable).
The idea is to filter the source term.
Choose $\psi_1\in C^\infty_0(\RR^d)$  equal to 1 on a neighborhood
of the origin.  Choose  $0<\alpha<1$.
The operator $\psi_1(\eps^\alpha D)$ is the Fourier multiplier 
 $g\mapsto \caF^{-1}\psi_1(\eps^\alpha \xi)\caF g$.  Equivalently $D:=(1/i)\partial_x$.
The filtered equation  is
\begin{equation}
\label{eq:filtered}
\Big[
a^*_2(\partial_{t,x})
 \ +\
 \sum_{j=2}^{k+1}
 \eps^{2j-2}\
 \widetilde a_{2j}(\partial_{x})
 \Big]
 v_0^k \, =\, 
 \psi_1(\eps^\alpha D)\big(1+R^{k}(\eps \partial_{t,x} ) \big)f,
 \quad
 v_0^k=0 \  {\rm for}\ t<0.
\end{equation}
The right hand side has Fourier transform supported in 
$|\xi|\leq C \eps^{-\alpha} \ll 1/\eps$.

Equation \eqref{eq:filtered} is the one that is solved to determine a profile $v_0^k$.
The filtered equation
\eqref{eq:filtered}  has  a unique tempered solution.
That solution has
spatial Fourier transform  supported in $\eps^{-\alpha}\,{\rm supp}\,\psi_1$.
Energy bounds  like those for $a_2^*$ are proved in Section \ref{sec:stability}.
The operator on the left in \eqref{eq:disp} is the sum of the 
homogenized operator and hopefully small higher order terms. The higher order terms are sometimes thought of 
as dispersive correctors. This is at least the original interpretation of $\widetilde a_4$ in \cite{santosa}.  
The next definition  summarizes  the recipe for the approximate solution.

\begin{definition} [\bf Criminal approximation]
\label{def:criminal}   Fix the choice of 
$\psi_1\in C_0^\infty(\R^d)$, $0<\alpha<1$,  and $0\le k\in \NN$.
Define profiles $v_n^k(\eps, t, x, y)$ for $0\le n\le 2k+2$ as follows.

$\bullet$  Nonoscillatory parts.  For $ 1\le n\le 2k+2$,
$\pi v_n^k=0$.   For $n=0$, $\pi v_0^k$ is  the unique tempered
solution of  the high order homogenized equation \eqref{eq:filtered}.

$\bullet$  Oscillatory parts.   For $1\le n\le 2k+2$,  
$(I-\pi) v_{n}^k   = \chi_n\,\pi v_0^k$, where $\chi_n$ is defined by 
\eqref{eq:chi-k}, and $(I-\pi) v_{0}^k   =0$. 
Equivalently, $v_{0}^k=\pi v_{0}^k$. 
 
Define 
\begin{equation}
\label{eq:defVk}
V^k(\eps,t,x,y)\ :=\ \sum_{n=0}^{2k+2}\,
 \eps^n\, v_n^k(\eps,t,x,y) 
=\Big(I + \sum_{n=1}^{2k+2}\eps^n\chi_n(y,\partial_t,\partial_x)\Big)
v_0^k(\eps,t,x)\, .
\end{equation}
The criminal approximate solution is $V^k(\eps, t,x,x/\eps)$.
\end{definition}

The main result of the present paper is the following approximation theorem.

\begin{theorem}[\bf Criminal error] 
\label{thm:criminal}
Suppose that  $u^\eps$ is the 
exact solution  of \eqref{eq:homprob}
and $V^k$ is
given by \eqref{eq:defVk}.
For each $k\ge2$ there are positive  constants $C,\eps_0$
 so that for $0<\eps <\eps_0$
 and $t\ge 0$,
 the error in energy satisfies
\begin{equation*}
\big\|
\nabla_{t,x} \big(u^\eps(t,x) \ -\ V^k(\eps, t,x,x/\eps)\big)
\big\|_{L^2(\RR^d_x)}
\ \le \ C\, \eps^{2k+1}\, \langle t\rangle\,,
\quad
{\rm with} \quad \langle t\rangle := \sqrt{1+t^2}\,.
\end{equation*}
\end{theorem}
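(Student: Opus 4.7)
The plan is to reduce the bound to a uniform-in-time estimate on the residual. Set $r^\eps(t,x):=u^\eps(t,x)-V^k(\eps,t,x,x/\eps)$, $L^\eps:=\rho(x/\eps)\partial_t^2-\dive(a(x/\eps)\grad)$, and $g^\eps:=f-L^\eps V^k(\eps,\cdot,\cdot,\cdot/\eps)$. Because the constant-coefficient filtered equation \eqref{eq:filtered} is causal and $f$ is supported in $\{t\ge0\}$, $\pi v_0^k$ and hence every $v_n^k=\chi_n\pi v_0^k$ vanish for $t<0$, so $r^\eps$ has zero Cauchy data and satisfies $L^\eps r^\eps=g^\eps$. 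The standard $L^\eps$-energy inequality, uniform in $\eps$ from the positivity of $\rho$ and $a$, yields
\[
\|\nabla_{t,x}r^\eps(t,\cdot)\|_{L^2(\RR^d)}\ \le\ C\int_0^t\|g^\eps(s,\cdot)\|_{L^2(\RR^d)}\,ds,
\]
so the theorem reduces to proving $\|g^\eps(t,\cdot)\|_{L^2}\le C\eps^{2k+1}$ uniformly in $t\ge 0$.

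I would compute $g^\eps$ from the Laurent expansion \eqref{eq:defW}: since $V^k=\sum_{n=0}^{2k+2}\eps^n v_n^k$ is a finite sum, one gets $L^\eps V^k=\sum_{n=-2}^{2k+2}\eps^n w_n^k(t,x,x/\eps)$ with $w_n^k=\rho\partial_t^2 v_n^k-\caA_{xx}v_n^k-\caA_{xy}v_{n+1}^k-\caA_{yy}v_{n+2}^k$ and the convention $v_m^k=0$ outside $0\le m\le 2k+2$. For $-2\le n\le 2k$ all three profiles involved are present, so the construction of the $\chi_n$ in Definition \ref{def:criminal} together with Theorem \ref{thm:osc} enforces $(I-\pi)w_n^k=0$ and, via the leap-frog formula \eqref{eq:mildred} combined with $\pi v_m^k=0$ for $m\ge1$, gives $\pi w_{2i}^k=a_{2i+2}^*(\partial_{t,x})\pi v_0^k$ for $0\le 2i\le 2k$ and $\pi w_n^k=0$ for odd $n\le 2k$. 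At the top indices $n=2k+1,2k+2$ a direct inspection produces, after subtraction of what the leap-frog formula \emph{would} have supplied, a defect equal to $\eps^{2k+1}\caA_{yy}(\chi_{2k+3}\pi v_0^k)+\eps^{2k+2}\bigl[a^*_{2k+4}(\partial_{t,x})\pi v_0^k+\caA_{xy}(\chi_{2k+3}\pi v_0^k)+\caA_{yy}(\chi_{2k+4}\pi v_0^k)\bigr]$ evaluated at $y=x/\eps$, i.e.\ $\eps^{2k+1}$ times a bounded-order differential operator acting on $\pi v_0^k$ with $L^\infty$-coefficients in $y$.

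To collapse the bulk I would invoke the normal-form identity \eqref{eq:R2j}. Setting
\[
\mathbf A_\eps:=\sum_{j=1}^{k+1}\eps^{2j-2}a_{2j}^*(\partial_{t,x}),\ \
\mathbf B_\eps:=a_2^*(\partial_{t,x})+\sum_{j=2}^{k+1}\eps^{2j-2}\widetilde a_{2j}(\partial_x),\ \
R^k(\eps\partial_{t,x}):=\sum_{j=1}^{k}\eps^{2j}R_{2j}(\partial_{t,x}),
\]
a degree-by-degree grouping of \eqref{eq:R2j} gives $\bigl(1+R^k(\eps\partial_{t,x})\bigr)\mathbf A_\eps=\mathbf B_\eps+\eps^{2k+2}\caE_\eps$ for some constant-coefficient operator $\caE_\eps$ of bounded order. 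Applying this identity to $\pi v_0^k$ and using that $\pi v_0^k$ solves \eqref{eq:filtered} gives $(1+R^k)\mathbf A_\eps\pi v_0^k=\psi_1(\eps^\alpha D)(1+R^k)f+\eps^{2k+2}\caE_\eps\pi v_0^k$. Since $\psi_1(\eps^\alpha D)$ acts only in $x$ and $R^k(\eps\partial_{t,x})$ is constant-coefficient, the two commute, and truncating the Neumann series for $(1+R^k)^{-1}$ modulo $\eps^{2k+2}$ produces $\mathbf A_\eps\pi v_0^k=\psi_1(\eps^\alpha D)f+\eps^{2k+2}\caF_\eps\pi v_0^k$ with $\caF_\eps$ of bounded order. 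Combined with the boundary defect of the previous paragraph this gives
\[
g^\eps\ =\ \bigl(1-\psi_1(\eps^\alpha D)\bigr)f\ +\ \eps^{2k+1}\,\caS_\eps(\pi v_0^k)(t,x,x/\eps),
\]
where $\caS_\eps$ is a differential operator in $(t,x)$ of order bounded by some $s_0=s_0(k)$ with $L^\infty$-coefficients in $y$. The first summand is $O(\eps^N)$ for every $N$ because $f$ is Schwartz in $x$; the second is bounded in $L^2_x$ by $C\eps^{2k+1}\|\pi v_0^k(\eps,t,\cdot)\|_{H^{s_0}(\RR^d)}$.

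The main obstacle is therefore the $t$-uniform Sobolev bound $\sup_{t\ge 0}\|\pi v_0^k(\eps,t,\cdot)\|_{H^{s_0}(\RR^d)}\le C(s_0,k,f)$, with $C$ independent of $\eps$, even though the operator on the left of \eqref{eq:filtered} is typically ill-posed for $k\ge 2$ (the first dispersive corrector $\widetilde a_4$ has the wrong sign, \cite{COV2,Lam,ABV}). This is precisely the role of the filter $\psi_1(\eps^\alpha D)$ and the content of the stability analysis announced in Section \ref{sec:stability}: the filter confines the spatial Fourier support of $\pi v_0^k$ to $|\xi|\lesssim\eps^{-\alpha}$, so each corrector $\eps^{2j-2}\widetilde a_{2j}(\partial_x)$ has symbol of size at most $\eps^{(2j-2)(1-\alpha)}$ on that range and is a genuinely small perturbation of the well-posed hyperbolic operator $a_2^*(\partial_{t,x})$; a perturbed-energy argument then yields $H^{s_0}$-bounds controlled by Sobolev norms of $f$, uniform in $(\eps,t)$. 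Inserting these bounds into the residual estimate and then into the energy inequality of the first paragraph delivers the announced $C\eps^{2k+1}\langle t\rangle$ error.
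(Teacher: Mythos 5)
Your proposal follows the same overall architecture as the paper's proof: write the equation for the error $r^\eps = u^\eps - V^k(\cdot,\cdot,\cdot/\eps)$, compute the residual from the Laurent expansion, collapse the bulk via the normal-form identity \eqref{eq:R2j}, and close with an energy estimate plus uniform-in-time bounds for $v_0^k$ from the stability theory of Section \ref{sec:stability}. That much is correct and is indeed the path the paper takes.

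There is, however, a genuine gap in the residual estimate, and it is exactly the place the paper has to work hardest. You assert that the residual is ``$\eps^{2k+1}$ times a bounded-order differential operator acting on $\pi v_0^k$ with $L^\infty$-coefficients in $y$,'' and you then bound its $L^2_x$ norm by $\eps^{2k+1}\|\pi v_0^k\|_{H^{s_0}}$ and feed it into the standard Duhamel energy inequality $\|\nabla_{t,x}r^\eps(t)\|_{L^2}\lesssim\int_0^t\|g^\eps(s)\|_{L^2}\,ds$. This is not available under the paper's hypotheses. The coefficients $\rho,a$ are only $L^\infty(\TT^d)$, so the correctors $c_{\beta,n}$ (Lemma \ref{lem:wellposedchi}) are only in $H^1(\TT^d)$. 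In the top-order residual terms, $\caA_{xy}$ and $\caA_{xx}$ applied to $(I-\pi)v^k_{2k+1}$, $(I-\pi)v^k_{2k+2}$ produce two kinds of terms once evaluated at $y=x/\eps$: (a) terms with coefficients $a(x/\eps)\,\nabla_y c_{\beta,n}(x/\eps)$, which lie only in $L^2(\TT^d)$, not $L^\infty$; and (b) genuine divergence-form terms $\div\bigl[a(x/\eps)c_{\beta,n}(x/\eps)\nabla\partial^\beta v_0^k\bigr]$, which are not functions in $L^2_x$ at all but only distributions of the form $\div g$ with $g\in L^2_x$. For (a) you need the two-scale estimate of Proposition \ref{prop:reslemnew}, which trades $L^2(\TT^d)$ coefficients against $H^s(\RR^d_x)$ regularity of $v_0^k$; this is stronger than a pointwise $L^\infty$ bound on the coefficient and is not a direct consequence of your stated assumption. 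For (b), the standard $L^1_tL^2_x\to L^\infty_t\dot H^1_x$ energy inequality does not apply; the paper instead proves and uses Proposition \ref{prop:energyh-1source}, a refined estimate for sources $f+\div g$ requiring control of $\|f\|_{L^1_tL^2_x}$, $\|\partial_t g\|_{L^1_tL^2_x}$, and $\|g\|_{L^\infty_tL^2_x}$. Lemma \ref{lem:etimatecaE1} sorts the top-order residual into exactly these two pieces. Without this split and the $H^{-1}$-source stability estimate, the proof does not close for $L^\infty$ coefficients.

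Two smaller points. Your explicit formula for the ``boundary defect'' in terms of $\chi_{2k+3},\chi_{2k+4}$ and $a^*_{2k+4}$ is not what the truncated ansatz of Definition \ref{def:criminal} (which stops at $\chi_{2k+2}$) produces; the paper records the defect directly as $\eps^{2k+1}Z_{2k+1}+\eps^{2k+2}Z_{2k+2}$, where $Z_{2k+1}=(\rho\partial_t^2-\caA_{xx})(I-\pi)v^k_{2k+1}-\caA_{xy}(I-\pi)v^k_{2k+2}$ and $Z_{2k+2}=(\rho\partial_t^2-\caA_{xx})(I-\pi)v^k_{2k+2}$, which is cleaner and avoids introducing undefined correctors. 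And your Neumann-series inversion of $1+R^k$ is fine modulo $\caO_{2k+2}$, but the paper packages this more directly as Corollary \ref{cor:RtildeR} (multiplication by the explicitly truncated inverse $1+\widetilde R^k$), which is what makes the error operator's order manifestly bounded without extra bookkeeping.
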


\begin{remark}
{\bf i.}   If one wants the error to decrease as $\eps^{N_1}$ on time intervals $0\leq t\leq C/\eps^{N_2}$, 
it suffices to choose $k$ so that 
$N_1+N_2 \le 2k+1$.
  
{\bf ii.}  The initial value problem defining $v^k_0$ has constant
coefficients. Its spatial Fourier Transform is given by an explicit
formula.    A spectrally accurate approximate solution is 
computable by FFT.
\end{remark}

Writing $u=\int_0^t u_t\,dt$ yields the following  corollary. 

\begin{corollary}
With the assumptions and notations in Theorem \ref{thm:criminal}, 
the error measured in $L^2(\RR^d)$ satisfies
\begin{equation*}
\big\|
u^\eps(t) \ -\ V^k(\eps, t,x,x/\eps)
\big\|_{L^2(\RR^d)}
\ \le \ C\, \eps^{2k+1}\, \langle t\rangle^2\,.
\end{equation*}
\end{corollary}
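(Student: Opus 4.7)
The plan is to deduce the $L^2$ estimate directly from the energy estimate of Theorem \ref{thm:criminal} by integrating in time, following the hint $u = \int_0^t u_t\,dt$.

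First I would check that the error $E^k(t,x) := u^\eps(t,x) - V^k(\eps,t,x,x/\eps)$ vanishes at $t=0$ so that the fundamental theorem of calculus gives
\begin{equation*}
E^k(t,x) \ =\ \int_0^t \partial_s E^k(s,x)\,ds \qquad \text{for } t\ge 0.
\end{equation*}
Since $f=0$ for $t<0$, the exact solution $u^\eps$ satisfies $u^\eps \equiv 0$ for $t\le 0$ by causality. On the approximation side, the profile $v_0^k$ defined by \eqref{eq:filtered} vanishes for $t<0$ by construction, and since the higher profiles $v_n^k = \chi_n(y,\partial_t,\partial_x)\pi v_0^k$ are differential operators applied to $v_0^k$, they also vanish for $t<0$; hence $V^k$ and all its time derivatives vanish at $t=0$, and so does $E^k$.

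Next I would apply Minkowski's inequality under the integral in $x$:
\begin{equation*}
\bigl\| E^k(t,\cdot)\bigr\|_{L^2(\RR^d)}
\ \le\ \int_0^t \bigl\| \partial_s E^k(s,\cdot)\bigr\|_{L^2(\RR^d)}\,ds
\ \le\ \int_0^t \bigl\| \nabla_{t,x} E^k(s,\cdot)\bigr\|_{L^2(\RR^d)}\,ds.
\end{equation*}
By Theorem \ref{thm:criminal}, the integrand is bounded by $C\eps^{2k+1}\langle s\rangle$, so
\begin{equation*}
\bigl\| E^k(t,\cdot)\bigr\|_{L^2(\RR^d)}
\ \le\ C\eps^{2k+1}\int_0^t \langle s\rangle\,ds.
\end{equation*}
Since $\int_0^t\sqrt{1+s^2}\,ds \le \int_0^t(1+s)\,ds = t + t^2/2 \le 2\langle t\rangle^2$, the claimed bound $C\eps^{2k+1}\langle t\rangle^2$ follows (after adjusting the constant).

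There is no real obstacle here: the only point requiring a moment of care is the vanishing of $E^k$ at $t=0$, which is essential for the fundamental theorem of calculus to produce only one extra factor of $\langle t\rangle$ rather than leaving an uncontrolled boundary term. Everything else is Minkowski and a one-line integration.
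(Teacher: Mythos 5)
Your proof is correct and matches the paper's intended argument exactly: the paper simply says ``Writing $u=\int_0^t u_t\,dt$ yields the following corollary,'' which is precisely the fundamental theorem of calculus plus Minkowski computation you carry out, with the additional (correct) care to note that the error vanishes at $t=0$.
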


A more subtle corollary is that the oscillating part of the approximate solution is not necessary 
for the long time asymptotics if one is content with an error of the order of $\eps$.

\begin{corollary}
\label{cor.criminal}
With the assumptions and notations in Theorem \ref{thm:criminal}, 
the error from the leading term $v_0^k$ satisfies
\begin{equation*}
\big\|
u^\eps(t) \ -\ v_0^k(\eps, t,x)
\big\|_{L^2(\RR^d)}
\ \le \ C\, \left( \eps + \eps^{2k+1}\, \langle t\rangle^2 \right) \,.
\end{equation*}
\end{corollary}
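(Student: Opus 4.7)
The plan is a short triangle-inequality reduction to the preceding corollary. Decompose
\[
u^\eps(t,\cdot) - v_0^k(\eps,t,\cdot) \;=\; \big(u^\eps(t,\cdot) - V^k(\eps,t,\cdot,\cdot/\eps)\big) \;+\; \big(V^k(\eps,t,\cdot,\cdot/\eps) - v_0^k(\eps,t,\cdot)\big).
\]
The $L^2(\RR^d)$ norm of the first difference is bounded by $C\eps^{2k+1}\langle t\rangle^2$ thanks to the immediately preceding corollary, which already supplies the second term in the claimed inequality. It therefore suffices to bound the second difference by $C\eps$ with an implicit constant essentially independent of $t$.

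From Definition \ref{def:criminal} and \eqref{eq:defVk} the second difference equals
\[
\sum_{n=1}^{2k+2}\eps^n\,\chi_n(x/\eps,\partial_t,\partial_x)\,v_0^k(\eps,t,x).
\]
By \eqref{eq:chi-k} each $\chi_n$ is a homogeneous order-$n$ differential operator in $(\partial_t,\partial_x)$ whose coefficients are bounded periodic functions of $y$; evaluating at $y = x/\eps$ preserves the $L^\infty$ control, hence
\[
\big\|\eps^n \chi_n(x/\eps,\partial_t,\partial_x) v_0^k\big\|_{L^2(\RR^d)} \;\le\; C_n\, \eps^n\, \|v_0^k\|_{H^n(\RR^{1+d})}.
\]
Provided the Sobolev norm $\|v_0^k\|_{H^n(\RR^{1+d})}$ is bounded uniformly in $\eps$ with at most polynomial growth in $t$, the $n=1$ contribution is $O(\eps)$ and the remaining $n\ge 2$ terms are $O(\eps^n)$, so the sum is $O(\eps)$ as required.

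The remaining ingredient, and the step I expect to require the most care, is precisely the uniform Sobolev control of $v_0^k$. It is supplied by the energy estimates of Section \ref{sec:stability} applied to the filtered high-order homogenized equation \eqref{eq:filtered}: the right-hand side $\psi_1(\eps^\alpha D)(1+R^k(\eps\partial_{t,x}))f$ and all its space-time derivatives are bounded uniformly in $\eps$, because $f$ is smooth with $\partial_{t,x}^\alpha f \in L^2(\RR^{1+d})$ for every multi-index $\alpha$, and because the Fourier multiplier $\psi_1(\eps^\alpha D)$ does not enlarge Sobolev norms. Time derivatives of $v_0^k$ are exchanged for spatial derivatives through the equation itself; the correctors $\eps^{2j-2}\widetilde a_{2j}$ are of lower order than $a_2^*$ on the frequency support $|\xi|\lesssim \eps^{-\alpha}$ of $v_0^k$, so the dominant energy is that of the standard homogenized wave operator $a_2^*$, whose conservation beyond the time support of the source yields the desired $\eps$-uniform bound. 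Combined with the first step this completes the proof.
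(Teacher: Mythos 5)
Your decomposition
\[
u^\eps - v_0^k \;=\; \bigl(u^\eps - V^k\bigr) + \bigl(V^k - v_0^k\bigr)
\]
with the first difference controlled by the preceding corollary, is the intended route, and the identification of the second difference with $\sum_{n=1}^{2k+2}\eps^n\chi_n(x/\eps,\partial_{t,x})v_0^k$ via Definition \ref{def:criminal} is correct. However, there are two gaps in your treatment of that second term.

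First, you claim that the coefficients $c_{\beta,n}(y)$ of $\chi_n$ are \emph{bounded} periodic functions so that evaluating at $y=x/\eps$ preserves $L^\infty$ control. Lemma \ref{lem:wellposedchi} only gives $c_{\beta,n}\in H^1(\TT^d)$, and for $d\ge 2$ this does \emph{not} embed into $L^\infty(\TT^d)$; in general the correctors are unbounded. The estimate
$\|\eps^n\chi_n(x/\eps,\partial_{t,x})v_0^k\|_{L^2}\le C\eps^n\|v_0^k\|_{H^n}$
therefore does not follow. The correct tool is Proposition \ref{prop:reslemnew}, which handles exactly the two-scale product $c(x/\eps)\,v(x)$ with $c\in L^2(\TT^d)$ at the cost of $s>d/2$ extra $\eps$-scaled derivatives of $v$. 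Since $\eps\le 1$, that gives the bound you want once you know all spatial and temporal derivatives of $v_0^k$ are uniformly bounded in $L^2(\RR^d_x)$.

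Second, the proviso ``at most polynomial growth in $t$'' of the Sobolev norm of $v_0^k$ would \emph{not} be enough: if the norm grew like a power of $t$, the $n=1$ contribution would be $O(\eps\,\mathrm{poly}(t))$ rather than $O(\eps)$, and the claimed $C\eps$ would fail for large times. What actually saves you is Corollary \ref{cor:hier}: for $\beta\neq 0$ every derivative $\partial_{t,x}^\beta v_0^k$ is bounded in $L^2(\RR^d)$ \emph{uniformly in $t$}; only $v_0^k$ itself grows, and it never appears in $\chi_n v_0^k$ since each $\chi_n$ with $n\ge 1$ carries at least one derivative. With this replacing the hand-waving and Proposition \ref{prop:reslemnew} replacing the false $L^\infty$ claim, your argument closes correctly.
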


Corollary \ref{cor.criminal} shows that for $N$ as large as one likes, 
if one takes $k\geq N$, then uniformly on 
$0\leq t\leq C/\eps^N$ the $L^2(\RR^d)$-error is smaller than $\eps$ 
using only the leading nonoscillatory term $v_0^k(\eps, t,x)$
in the approximate solution. Corollary \ref{cor.criminal} was 
proved in \cite{BG} in a more general context (almost 
periodic or random coefficients) with a  proof based 
on Bloch waves. Theorem \ref{thm:criminal} 
improves previous results since, not only 
the approximation error is valid for times as large as one wants,
but the error is as high order in $\eps$  as one wants.
Our results improve those of \cite{Lam}, \cite{DLS} 
which were restricted to times of order $1/\eps^2$ with an error 
of order $\eps$. The first paper \cite{Lam} relies on two scale 
asymptotic expansions, while the second one \cite{DLS} uses Bloch 
waves.

The  previous works \cite{Lam}, \cite{DLS}, \cite{BG} 
considered \eqref{eq:homprob} with $f=0$ and nonvanishing
initial data.
 In addition 
 in \cite{DLS} and \cite{BG}, $\partial_t u(0)=0$.
 One of the reasons that we can  push the 
analysis  further is that our choice simplifies some things.
We next expand a little on this choice. 
The solutions of 
\eqref{eq:homprob} with $f$ smooth in time with values in 
$L^2(\RR^d)$ and supported in a compact time interval
satisfy
\begin{equation}
\label{eq:noosc}
\forall j\in\NN,\quad
\sup_{0<\eps<1} \
\sup_{t\in \RR}
\
\big\|
\nabla_{t,x} 
\partial_t^j\,
u^\eps(t)
\big\|_{L^2(\RR^d)}
\ <\ 
\infty.
\end{equation}
The coefficients
vary on the small scale $\epsilon$
but the solutions do not
oscillate in time.   
For smooth solutions with $f=0$ and
 Cauchy data $u^\eps(0), \partial_t u^\eps(0)$
 the initial derivatives satisfy
 \begin{align*}
 \forall j\ge 0, \ \ell\in \{0,1\},\qquad
 \partial_t^{2j+\ell} u^\eps(0) &= 
 \bigg(
 \frac1{\rho(x/\eps)}
 \big(
 \div\, a(x/\eps)\, \grad\big)
 \bigg)^j \partial_t^\ell u^\eps(0)
\,.
 \end{align*}
These yield formulas for 
$
\nabla_{t,x} 
\partial_t^j\,
u^\eps\big|_{t=0}
\ :=\ H_j^\eps\big(u^\eps(0),\partial_t u^\eps(0)\big).
$
The initial data corresponding to solutions satisfying
\eqref{eq:noosc} are those so that 
\begin{equation}
\label{eq:awkward}
\forall j,\qquad
\sup_\eps \big\|H_j^\eps\big(u^\eps(0), \partial_t u^\eps(0) \big) \big\|_{L^2(\RR^d)}
\ <\
\infty\,.
\end{equation}
For the $f=0$ problem with Cauchy data satisfying \eqref{eq:awkward}
the approximation properties
for both classical and criminal strategies are as in our Theorems.
For the $f=0$ problem the accuracy of the approximation
is determined by how well the initial data can be appproximated
by data satisfying \eqref{eq:awkward}.

The condition \eqref{eq:awkward}  is awkward to use.
For example when $\rho,a$ are just $L^\infty(\TT^d)$ and at least
one of them is not constant
it is true but not immediately obvious that no family of initial
data that is independent  of $\eps$ can satisfy this condition.  
Without performing a nontrivial computation it is not
clear that there are initial data given by 
two scale expansions that satisfy this condition.
The solutions from traditional homogenization  with our source term
$f(t,x)$ viewed 
for $t$  beyond ${\rm supp}\,f$ show that there are
many such two scale data. 

Equation \eqref{eq:homprob} shows that 
solutions  that do not oscillate in time 
are important.  Their description via Cauchy data is 
awkward. 
We study the natural problem \eqref{eq:homprob}.

\subsection{Outline of the present paper}

In Section \ref{sec.twoscale} the classical two scale asymptotic expansion 
for wave equations
is analysed.
Theorem \ref{thm:recur2} proves that 
the odd order homogenized operators vanish.
Theorem \ref{thm:secular}  shows that secular growth of the  profiles 
is half as fast as one might expect.
Remark \ref{rem:BP1} is a first version of the criminal 
path.

Section \ref{sec:2minusdelta} studies the accuracy of the 
classical
expansion. Classical proofs show that for for bounded time
and any $N$ the error is  $O(\eps^N)$.
We  prove that taking more corrector terms one has
  $O(\eps^N)$ accuracy for times of order 
$\eps^{-2+\delta}$, for any $\delta>0$.

Section \ref{sec.crim} presents the details of  the derivation of 
the criminal asymptotic 
expansion and proves Theorem \ref{thm:criminal}.  

Section \ref{sec.polynom} shows that our results
for  sources $f$ compactly supported in time suffice, by a simple argument,
to treat sources that
 grow at most polynomially in time

Section \ref{sec.system}  discusses second order systems 
 including linear elasticity and Schr\"odinger's equation.  The place where
the argument is not automatic  is  the proof that the 
odd order homogenized operators vanish in the systems case
(Theorem \ref{thm:rec2}).

Appendix \ref{sec:maxgrowth} 
gives an example in dimension $d=1$ for which
the upper bound on the  the secular growth predicted in 
Section \ref{sec.twoscale} is attained. 
For the same example, 
 the classical two 
 scale asymptotic expansion \eqref{eq:utaylor} does not
yield a good approximation  for times  $t\sim 1/\eps^{2+\delta}$. 

Appendix \ref{sec.estimate} provides a classical a priori estimate for two scale oscillating 
functions.

Appendix \ref{sec.stability} proves that solutions of the  wave equation
have finite energy for sources less regular in $x$  but more regular in $t$ 
than the standard condition $f\in L^1_{loc}(\RR\,;\, L^2(\RR^d))$.

\section{Analysis of the two scale ansatz \eqref{eq:utaylor} }
\label{sec.twoscale}
Revisit the standard method of two scale asymptotic expansions for the wave equation 
\eqref{eq:homprob}. We depart from the textbooks \cite{BP}, \cite{BLP}, \cite{SP} in 
several ways. First, in those
 books the method is usually applied to an elliptic equation and the wave 
equation is only said to be treated similarly. Second, we do not content ourselves with 
computing the first two or three terms and giving a recurrence for the other ones.
Exact combinatorial formulas are given for terms of all orders  in the {\it ansatz} \eqref{eq:utaylor}.

Infinite order asymptotic expansions require that the source term $f$ be 
infinitely smooth. The periodic coefficients $\rho(y)$ and $a(y)$ are 
assumed only to be in 
$L^\infty(\TT^d)$. 
 
\subsection{Ansatz and first hierarchy}
Let $\caA_{yy}, \caA_{xy},\caA_{xx}$ be the second order partial differential operators 
defined in \eqref{eq:operators}.  
Consider the two scale power series \eqref{eq:utaylor}, 
and the corresponding formula
\eqref{eq:defW} for the right hand side.
All terms $u_n(t,x,y)$ and $w_n(t,x,y)$ are periodic in $y$,
equivalently defined for $y\in\TT^d$. 
The relation 
\eqref{eq:defW} 
 is equivalent to
\begin{equation}
\label{eq:mary}
\bigg[
	 \rho(y) \partial_t^2
  -\frac{1}
  {\eps^2}
  \mathcal{A}_{yy}
  \ -\ 
   \frac{1}
  {\eps}
  \mathcal{A}_{xy}
  \ -\ 
  \mathcal{A}_{xx}
  \bigg]
  \sum_{n=0}^\infty
  \eps^n\, u_n(t,x,y)
  \ =\
  \sum_{n=-2}^\infty
 \eps^n \,w_n(t,x,y)
\end{equation}
as formal  Laurent series in $\eps$.
 Equation  \eqref{eq:mary}  at order $\eps^n$ reads 
\begin{equation}
 \begin{aligned}
 \label{eq:hierarchy1}
&\eps^{-2}:&
-\caA_{yy} u_0 =w_{-2} \,,&
\cr
&\eps^{-1}:&
-(\caA_{yy} u_1+\caA_{xy}) u_0 =w_{-1}\,,&
\end{aligned}
\end{equation}
and, for $k\geq0$, the  coefficient of $\eps^k$ is
\begin{align}
 \label{eq:epsk}
\rho(y) \partial_t^2 u_k-
 ( \mathcal{A}_{yy} u_{k+2}
 \ +\ 
 \mathcal{A}_{xy} u_{k+1} 
 \ +\ 
 \mathcal{A}_{xx} u_k  )
  \ =\
  w_k\,.
\end{align}
In both classical and  criminal strategies we construct
profiles so that  the  $w_k$ do not depend 
on the fast variable $y$. The equation \eqref{eq:homprob} 
corresponds to $w_0=f$ and  $w_k=0$, $k\ne 0$.

\subsection{Projections and the hierarchy}
The analysis of \eqref{eq:hierarchy1}, \eqref{eq:epsk} pivots around the 
second order symmetric elliptic operator $\mathcal{A}_{yy}:H^1(\TT^d)\to H^{-1}(\TT^d)$.
Denote by $\pi$ the $L^2(\TT^d)$ orthogonal projection on constants,
$$
  \pi g \ :=\ \frac{1}{|\TT^d|} \int_{\TT^d} g(y)\ dy\,.
$$
This operator $\pi$ coincides with the action of $g$ as a distribution on the test
function $1$.  It is therefore a well defined operator on all 
periodic distributions. 
This operator extends to functions of $t,x,y$ by acting only on the last variable,
$$
(\pi g)(t,x) \ :=\ \frac{1}{|\TT^d|} \int_{\TT^d} g(t,x,y)\ dy\,.
$$

\begin{lemma}[\bf Cell Problem]
\label{lem:cell}
The operators in \eqref{eq:operators}  satisfy
$$
\pi \mathcal{A}_{yy}=0 \, , \quad \mathcal{A}_{yy}\pi =0
\quad \mbox{ and } \quad \pi\mathcal{A}_{xy}\pi=0 \, .
$$
The nullspace of $\mathcal{A}_{yy}$ is equal to the space of constant functions, 
i.e. $\pi H^1(\TT^d)$.  The image,  ${\rm Range} \,\mathcal{A}_{yy}$, is the 
subspace of mean zero functions, i.e. $(I-\pi) H^{-1}(\TT^d)$. 
Therefore $\mathcal{A}_{yy}$ is a bijection 
$
 (I-\pi) H^{1}(\TT^d) 
 \ \to\
(I-\pi) H^{-1}(\TT^d)  
$.   
$\mathcal{A}_{yy}^{-1}$ denotes its inverse.
\end{lemma}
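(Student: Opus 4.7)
The three operator identities follow by unpacking definitions and exploiting the divergence structure in $y$. For the first, $\pi \caA_{yy} g = |\TT^d|^{-1}\int_{\TT^d} \dive_y(a(y)\grad_y g)\,dy$ vanishes because the integrand is a $y$-divergence on the torus (no boundary terms). The second is immediate: constants in $y$ have vanishing $\grad_y$. For the third, given $g(t,x)$ independent of $y$, the term $\dive_x(a(y)\grad_y g)$ vanishes and the remaining term $\dive_y(a(y)\grad_x g)$ is again a $y$-divergence, so its $\pi$-average is zero. In each case I would verify the identities first for $g\in \cinfty(\TT^d)$ and then extend by density to $H^1$ or to distributions, remembering that $\pi$ makes sense distributionally as pairing against the constant test function $1$.

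For the nullspace characterization, I would test the equation $\caA_{yy}u = 0$ against $u\in H^1(\TT^d)$ itself. Using the symmetry of $a(y)$ and integration by parts on the torus gives
\begin{equation*}
0 \ =\ -\langle \caA_{yy}u,u\rangle \ =\ \int_{\TT^d} a(y)\grad_y u\cdot \grad_y u\,dy \ \ge\ m_1 \int_{\TT^d}|\grad_y u|^2\,dy,
\end{equation*}
and the coercivity hypothesis on $a$ forces $\grad_y u = 0$, hence $u\in \pi H^1(\TT^d)$. The reverse inclusion is trivial.

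For the range, one inclusion is just the first identity: $\pi \caA_{yy}u = 0$ implies ${\rm Range}\,\caA_{yy}\subset (I-\pi)H^{-1}(\TT^d)$. For the other direction I would invoke Lax--Milgram on the quotient space $\V := (I-\pi)H^1(\TT^d)$, equipped with the norm $\|\grad_y\cdot\|_{L^2}$. Poincaré--Wirtinger on the torus gives equivalence with the $H^1$ norm on $\V$, and the bilinear form $B(u,v):=\int_{\TT^d} a(y)\grad_y u\cdot\grad_y v\,dy$ is continuous and coercive on $\V$ by the bounds on $a$. Hence for every $w\in (I-\pi)H^{-1}(\TT^d)$ there is a unique $u\in \V$ with $\caA_{yy}u = w$, and this defines $\caA_{yy}^{-1}$ as a bounded linear map in the opposite direction. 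Combining, $\caA_{yy}:\V\to (I-\pi)H^{-1}(\TT^d)$ is a bijection.

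The main (and only slight) subtlety is making sense of everything with coefficients only in $L^\infty(\TT^d)$: all computations should be phrased variationally on $H^1(\TT^d)$ rather than using classical derivatives, and the quotient-space Poincaré inequality is what converts $L^\infty$ ellipticity into coercivity on $\V$. Once this framework is set up, the three identities and the Fredholm-type structure follow without further work.
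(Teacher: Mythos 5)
Your proof is correct and takes essentially the same route as the paper, whose entire proof is the one-line remark that this is ``a classical application of the Lax--Milgram Lemma''; you simply fill in the standard details (divergence structure for the three identities, coercivity for the nullspace, Lax--Milgram on $(I-\pi)H^1(\TT^d)$ with Poincar\'e--Wirtinger for the range and the inverse).
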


\noindent
{\bf Proof.}   A classical
application of the Lax-Milgram Lemma.
\hfill
$\Box$
\vskip.3cm

 To solve 
\eqref{eq:hierarchy1} and \eqref{eq:epsk},
 these equations will be projected 
by $\pi$ (yielding the non-oscillatory hierarchy) and $(I-\pi)$ (leading to 
the oscillatory hierarchy) and solved separately.

\subsubsection{The oscillatory hierarchy}

Consider power series $U$ and $W$ for which $(I-\pi)w_n=0$ for all $n\geq -2$. 
Equations \eqref{eq:hierarchy1} and \eqref{eq:epsk} are multiplied on the left by $(I-\pi)$. 
Using $(I-\pi)w_{-2}=0$ the first line  of \eqref{eq:hierarchy1} becomes 
$$
0=(I-\pi)\mathcal{A}_{yy}u_0=(I-\pi)\mathcal{A}_{yy}(I-\pi)u_0.
$$
Lemma \ref{lem:cell} shows that this is equivalent to $(I-\pi)u_0=0$,
The oscillatory part of $u_0$ vanishes.

Since $\pi \caA_{xy} \pi =0$, one has $\pi \caA_{xy}u_0 = \pi \caA_{xy}\pi u_0=0$. 
Thus, the second line of 
\eqref{eq:hierarchy1}  shows that $(I-\pi)w_{-1}=0$  if and only if
\begin{equation}
\label{eq:oscillu1}
(I-\pi)u_1=  -\caA_{yy}^{-1}\,(I-\pi)\,\caA_{xy} \pi u_0 = 
-\caA_{yy}^{-1}\,\caA_{xy} \pi u_0
\ :=\ 
\chi_1(y, \partial_x) \pi u_0
\,.
\end{equation}
Next, derive analogous formulas expressing $(I-\pi)u_k$
in terms of the  $\pi u_j$ with $j<k$. 
Since by assumption $(I-\pi)w_k=0$ for all $k\geq 0$, \eqref{eq:epsk} leads to 
\begin{align*}
(I-\pi)
\Big[ \rho(y)\partial_t^2 u_k -
\mathcal{A}_{yy}u_{k+2} \ -\ 
 \mathcal{A}_{xy} u_{k+1} 
 \ -\ 
 \mathcal{A}_{xx} u_k 
  \Big]
  \ =\ 0\,.
\end{align*}
By Lemma \ref{lem:cell} this is equivalent to
\begin{align}
\label{eq:epsorthog}
(I-\pi)u_{k+2}
\ =\ 
-\,\mathcal{A}_{yy}^{-1}
\,
\big(
I-\pi
\big)
\Big[  
 \mathcal{A}_{xy} u_{k+1} 
 \ +\ 
 (\mathcal{A}_{xx}   
-\rho(y)\partial_t^2) u_k
  \Big]
  \,.
\end{align}
Equation \eqref{eq:epsorthog} expresses the oscillatory
part of $u_{k+2}$ in terms of earlier profiles. It can be further simplified 
by rewriting the earlier profiles as $u_j=\pi u_j + (1-\pi)u_j$, the sum of 
non oscillatory and oscillatory parts. Then express the $(1-\pi)u_j$ parts in terms
of still earlier profiles, and so on.  In this way
the oscillatory parts can be eliminated yielding a relation
determining the oscillatory parts in terms of the nonoscillatory parts
that is made explicit  
 in Theorem \ref{thm:osc}. In \eqref{eq:oscillu1} 
an operator $\chi_1$ was introduced.  This definition is now
extended to higher order. 

\begin{definition}
\label{def.chi-k}
Set $\chi_{-1} :=  0$, $\chi_0  :=  I$. 
For $k\ge 1$ define operators mapping functions of 
$t,x$ to functions of $t,x,y$   by
\begin{equation}
\begin{aligned}
\label{eq:recur1}
 \chi_k(y,\partial_t,\partial_x)  \ :=\  - \caA_{yy}^{-1} (I-\pi) 
\Big[ \caA_{x y} \chi_{k-1}\ +\  (\caA_{xx}-\rho(y)\partial_t^2) \chi_{k-2} \Big] 
\ =\ (I-\pi)\chi_k
.
\end{aligned}
\end{equation}
\end{definition}

This recovers the previous
definition of $\chi_1= - \caA_{yy}^{-1} \caA_{x y}= - \caA_{yy}^{-1} (I-\pi)\caA_{x y}$, where the last equality follows from Lemma \ref{lem:cell}. 
The operators $\chi_k$ depend on $y$. 
The $y$-dependence arises only from the coefficients $a(y), \rho(y)$.
To show that the above definition makes sense, it suffices to prove that for any 
smooth function $\varphi\in C^\infty(\mathbb{R}^{1+d})$ and every $(t,x)\in\mathbb{R}^{1+d}$
the argument of $\caA_{yy}^{-1}$, namely
\begin{equation*}
(I-\pi) 
\Big[ \caA_{x y} \chi_{k-1}\ +\  (\caA_{xx}-\rho\partial_t^2) \chi_{k-2} \Big]\varphi(t,x)\, ,
\end{equation*}  
belongs to the range of $\caA_{yy}$. This is verified in the next Lemma.

\begin{lemma}
\label{lem:wellposedchi}
For all $k\geq 1$ the following holds. For every function 
$\varphi\in C^\infty(\mathbb{R}^{1+d})$ and every $(t,x)\in\mathbb{R}^{1+d}$ one has that
$$
\Big[ \caA_{x y} \chi_{k-1}\ +\  (\caA_{xx}-\rho\partial_t^2) \chi_{k-2} \Big]\varphi(t,x) 
$$
belongs to $H^{-1}(\TT^d)$. In particular $\chi_k\varphi(t,x)\in (I-\pi)H^1(\TT^d)$. 
Furthermore, for any $k\geq1$, there exist coefficients $c_{\beta,k}\in (I-\pi) H^1(\TT^d)$ 
 such that
\begin{equation}
\label{eq:chikcalpha}
\chi_k(y,\partial_t,\partial_x) \ =\ 
\sum_{|\beta|=k}
c_{\beta,k}(y)\,\partial_{t,x}^\beta \, .
\end{equation}
In particular, $\chi_k$ is a homogeneous operator of degree $k$ in
$\partial_{t,x}$. 
\end{lemma}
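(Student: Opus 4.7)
The plan is a straightforward induction on $k$, with Lemma \ref{lem:cell} providing the workhorse: any $y$-dependent element of $(I-\pi)H^{-1}(\TT^d)$ is mapped by $\caA_{yy}^{-1}$ to an element of $(I-\pi)H^1(\TT^d)$. The base case $k=1$ is immediate since the bracket reduces to $\caA_{xy}\varphi = \dive_y(a(y)\grad_x\varphi(t,x))$ (the $\dive_x a(y)\grad_y$ piece kills $\varphi(t,x)$), which is the $y$-divergence of an $L^2(\TT^d)^d$ vector field and therefore a mean-zero element of $H^{-1}(\TT^d)$. Applying $-\caA_{yy}^{-1}(I-\pi)$ delivers $\chi_1\varphi(t,x)\in(I-\pi)H^1(\TT^d)$ together with the representation $\sum_{|\beta|=1}c_{\beta,1}(y)\partial_{t,x}^\beta\varphi$.

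For the inductive step, assume \eqref{eq:chikcalpha} with $c_{\beta,j}\in(I-\pi)H^1(\TT^d)$ for every $1\le j\le k-1$. The essential point is to verify that $\caA_{xy}\chi_{k-1}\varphi(t,x)$ and $(\caA_{xx}-\rho\partial_t^2)\chi_{k-2}\varphi(t,x)$ both belong to $H^{-1}(\TT^d)$ at each fixed $(t,x)$. Using the induction hypothesis (and treating $\chi_0=I$ directly for $k=2$), these expressions become finite sums of $c_{\beta,j}(y)$ times pure $t,x$-derivatives of $\varphi$ evaluated at $(t,x)$. For the $\caA_{xx}$ and $\rho\partial_t^2$ contributions the operators only act in $t,x$, so at fixed $(t,x)$ each summand is an $L^\infty(\TT^d)$ factor times an element of $H^1(\TT^d)\subset L^2(\TT^d)$, which sits in $H^{-1}(\TT^d)$. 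For $\caA_{xy}$, distributing $\dive_x a(y)\grad_y+\dive_y a(y)\grad_x$ across $c_{\beta,k-1}(y)\partial_{t,x}^\beta\varphi$ yields two kinds of terms: first, $a(y)\grad_y c_{\beta,k-1}(y)\cdot\grad_x\partial_{t,x}^\beta\varphi(t,x)$, which lies in $L^2(\TT^d)$ because $\grad_y c_{\beta,k-1}\in L^2$ and $a\in L^\infty$; and second, the $y$-divergence of the $L^2(\TT^d)^d$ function $a(y)c_{\beta,k-1}(y)\grad_x\partial_{t,x}^\beta\varphi(t,x)$, which is in $H^{-1}(\TT^d)$ by the very definition of the negative Sobolev space. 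Lemma \ref{lem:cell} then produces $\chi_k\varphi(t,x)\in(I-\pi)H^1(\TT^d)$.

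To read off the explicit decomposition, note that every operation in $\chi_k=-\caA_{yy}^{-1}(I-\pi)[\caA_{xy}\chi_{k-1}+(\caA_{xx}-\rho\partial_t^2)\chi_{k-2}]$ is linear in the $\partial_{t,x}^\beta\varphi$, which factor through to the right. Collecting coefficients yields $\chi_k=\sum_{|\beta|=k}c_{\beta,k}(y)\partial_{t,x}^\beta$, where each $c_{\beta,k}$ is a finite linear combination of $\caA_{yy}^{-1}$ applied to $L^2(\TT^d)$ and $\dive_y L^2(\TT^d)^d$ expressions built from $a$, $\rho$ and the earlier $c_{\beta',j}$; by Lemma \ref{lem:cell} these lie in $(I-\pi)H^1(\TT^d)$. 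Homogeneity of degree $k$ is then bookkeeping: $\caA_{xy}$ raises the degree in $\partial_{t,x}$ by $1$ and $\caA_{xx}-\rho\partial_t^2$ by $2$, while $\caA_{yy}^{-1}$ acts purely in $y$ and preserves the degree, so both contributions arrive with total degree exactly $k$. The only mildly subtle point in the whole argument is keeping the distributional identities clean when $a$ and $\rho$ are merely $L^\infty$, but the two facts that $\dive_y$ maps $L^2(\TT^d)^d$ into $H^{-1}(\TT^d)$ and that $L^\infty\cdot H^1\subset L^2$ on $\TT^d$ make the induction go through without further difficulty.
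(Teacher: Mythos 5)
Your proof is correct and follows essentially the same inductive strategy as the paper: show by induction that the bracket lands in $(I-\pi)H^{-1}(\TT^d)$ so that $\caA_{yy}^{-1}(I-\pi)$ from Lemma \ref{lem:cell} returns an $(I-\pi)H^1(\TT^d)$-valued quantity, then read off the coefficient structure and the degree by bookkeeping. You supply somewhat more detail than the paper does (tracking $L^\infty\cdot H^1\subset L^2$ and $\dive_y L^2\subset H^{-1}$ explicitly, and distributing $\caA_{xy}$ term by term), but the decomposition and the key lemma are identical.
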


{\bf Proof.} 
The proof is  by induction on $k$. 
For $k=1$, $\chi_1$ is a first-order operator in $x$ with $(I-\pi) H^1(\TT^d)$ 
coefficients, since $\caA_{x y}\varphi$ belongs to $H^{-1}(\TT^d)$. 

Assume the statement for $k\geq1$ and prove it for $k+1$. 
For a function $\varphi\in C^\infty(\mathbb{R}^{1+d})$ compute
\begin{equation}
\begin{aligned}
\label{eq:wellposedchi}
\big[
\caA_{xy}\chi_k + (\caA_{xx}  -  \rho & (y)\partial_t^2) \chi_{k-1}
\big]
\varphi(t,x)
\cr
\ =\ 
 \dive_x  \big(a  & (y)\grad_y\chi_k \varphi(t,x)\big) +
  \dive_y \big(a(y)\grad_x\chi_k\varphi(t,x)\big)
\cr
&\ +\
\dive_x \big( a(y)\grad_x\chi_{k-1}\varphi(t,x)\big)-\rho(y)\partial_t^2\big(\chi_{k-1}
\varphi(t,x)\big)\,.
\end{aligned}
\end{equation}  
By the induction hypothesis $\chi_k\varphi(t, x)$ and $\chi_{k-1}\varphi(t,x)$ are 
in $H^1(\TT^d)$.  Therefore,
all terms on the right hand side of \eqref{eq:wellposedchi} are 
 in $H^{-1}(\TT^d)$. 
In particular $\chi_{k+1}\varphi(t,x)\in H^1(\TT^d)$, which is the claimed result. 

Since the operator $\caA_{x y}$ is homogeneous of degree one and $(\caA_{xx}-\rho\partial_t^2)$ is homogeneous
of degree two,  it follows that $\chi_{k+1}$ is homogeneous of degree $k+1$. 
\hfill
\qed

\begin{remark} 
\label{rem:rhoconstant1}
{\bf i.}   If $\rho$ is independent of $y$, the fact that 
$(I-\pi)\chi_0=0$ implies that 
$\chi_2(y,\partial_t,\partial_x)$ does not depend on $\partial_t$. 
{\bf ii.}  In this case an induction 
on $k$ shows that  
for any $k\geq1$,
$\chi_k(y,\partial_t,\partial_x)$
contains only time derivatives of order   $\le k-2$. 
\end{remark}

The first structural result concerns  formal power series $U$
for which the oscillatory parts $(I-\pi)w_n$ vanish.

\begin{theorem}  
\label{thm:osc}
Fix $k\in\mathbb Z$ with $k\geq -2$. 
For a formal power series $U$ and corresponding $W$ the following are
equivalent.

{\bf i.}  
For $-2 \le j\le k$ one has
\begin{equation}
(I-\pi)w_j =0
\,.
\end{equation}

{\bf ii.}  For $0 \le \ell\le k+2$ one has
\begin{equation}
\label{eq:osc}
 (I-\pi) u_{\ell}  \ = \ \sum_{n=1}^{\ell} \chi_n   \pi u_{\ell-n} \,.
\end{equation}
\end{theorem}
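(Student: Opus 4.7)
The plan is to proceed by induction on $k\ge -2$. Write $P_k$ for statement (i) at level $k$ and $Q_k$ for statement (ii) at level $k$. Because $P_k$ adds the single constraint $(I-\pi)w_k=0$ to $P_{k-1}$, and $Q_k$ adds the single constraint $(I-\pi)u_{k+2}=\sum_{n=1}^{k+2}\chi_n\pi u_{k+2-n}$ to $Q_{k-1}$, it suffices, granted $P_{k-1}\Leftrightarrow Q_{k-1}$, to prove that these two new constraints are equivalent under the common hypothesis.

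For the base case $k=-2$, the first line of \eqref{eq:hierarchy1} gives $w_{-2}=-\caA_{yy}u_0$, so $(I-\pi)w_{-2}=0$ reads $\caA_{yy}(I-\pi)u_0=0$ after using $\pi\caA_{yy}=\caA_{yy}\pi=0$ from Lemma \ref{lem:cell}. The bijectivity of $\caA_{yy}$ on $(I-\pi)H^1(\TT^d)$ then gives $(I-\pi)u_0=0$, which is exactly $Q_{-2}$ since the sum in (ii) for $\ell=0$ is empty.

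For the inductive step, apply $(I-\pi)$ to \eqref{eq:epsk} and invoke Lemma \ref{lem:cell} to rewrite $(I-\pi)w_k=0$ as
\begin{equation*}
(I-\pi)u_{k+2}\ =\ -\caA_{yy}^{-1}(I-\pi)\bigl[\caA_{xy}u_{k+1}+(\caA_{xx}-\rho(y)\partial_t^2)u_k\bigr].
\end{equation*}
Now use $Q_{k-1}$: with the convention $\chi_0=I$, the inductive formula says $u_j=\sum_{n=0}^{j}\chi_n\pi u_{j-n}$ for $j\le k+1$. Substitute into the right hand side above, interchange the order of summation, and reindex by $m=n+1$ in the sum arising from $\caA_{xy}$ and by $m=n+2$ in the sum arising from $\caA_{xx}-\rho\partial_t^2$. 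Using $\chi_{-1}=0$ to include the $m=1$ term in the second sum, the coefficient of $\pi u_{k+2-m}$ becomes exactly
\begin{equation*}
-\caA_{yy}^{-1}(I-\pi)\bigl[\caA_{xy}\chi_{m-1}+(\caA_{xx}-\rho(y)\partial_t^2)\chi_{m-2}\bigr]\ =\ \chi_m,
\end{equation*}
by Definition \ref{def.chi-k}, for each $1\le m\le k+2$. This recovers the desired formula for $(I-\pi)u_{k+2}$, and every algebraic manipulation above is reversible, establishing the equivalence at level $k$.

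The only delicate point is bookkeeping: one must track the shifted indices carefully when the inductive expansions are inserted and verify that the operator emerging from the $m$-th column matches precisely the recursive definition of $\chi_m$, in particular taking care of the $m=1$ and $m=2$ boundary cases via the convention $\chi_{-1}=0$. There is no analytical obstacle, because Lemma \ref{lem:wellposedchi} guarantees that the argument of $\caA_{yy}^{-1}$ lies in $(I-\pi)H^{-1}(\TT^d)$ at every step, so the inversion is legitimate throughout.
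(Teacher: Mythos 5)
Your proof is essentially the paper's proof, presented with somewhat cleaner bookkeeping: by writing $u_j=\sum_{n=0}^{j}\chi_n\pi u_{j-n}$ (using $\chi_0=I$) and invoking $\chi_{-1}=0$, you reduce the paper's two-step regrouping (peel off $\chi_1\pi u_{k+1}$ and $\chi_2\pi u_k$, then collect the remaining terms into $\chi_{n+2}$) to a single reindexing $m=n+1$, $m=n+2$. The computation that results is the same, and your observation that the manipulations are reversible because $\caA_{yy}$ is a bijection on mean-zero functions is exactly what the paper uses.

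There is one small gap. Your inductive step applies $(I-\pi)$ to \eqref{eq:epsk}, but that equation is stated only for $k\ge 0$, so your argument does not cover the transition from $k=-2$ to $k=-1$. For $k=-1$ the relevant identity is the second line of \eqref{eq:hierarchy1}, which lacks the $(\caA_{xx}-\rho\partial_t^2)$ term; equivalently one must adopt the convention $u_{-1}=0$ for your substitution and reindexing to go through. This is why the paper treats $k=-1$ as a second base case before starting the induction at $k\ge 0$. You should either add that base case or note explicitly that your inductive step degenerates correctly at $k=-1$ once $u_{-1}$ is set to zero.
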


{\bf Proof.}  
For $k=-2$ the statement follows directly by recalling that $(I-\pi)w_{-2}=0$ if and only if $(I-\pi)u_0=0$. For $k=-1$ one has
\begin{equation}
\label{eq:IminusPiw-1}
(I-\pi)w_{-1}=-(I-\pi)\Big(\caA_{yy}u_1 + \caA_{x y} u_0\Big)\,.
\end{equation}
Lemma \ref{lem:cell}  implies  that
\begin{equation}
\label{eq:AyyIminusPi} 
\pi\caA_{yy}=0,\quad\quad
{\rm and},
\qquad(1-\pi)\caA_{yy}u_{k} = \caA_{yy}(1-\pi)u_{k}\quad\text{for all }k\geq 0.
\end{equation} 
Using \eqref{eq:AyyIminusPi} along with $u_0=\pi u_0$ and multiplying \eqref{eq:IminusPiw-1} by $-\mathcal{A}_{yy}^{-1}$  yields 
\begin{equation*}
-\caA_{yy}^{-1}(I-\pi)w_{-1}
\ =\
(I-\pi)u_1 + \caA_{yy}^{-1}(I-\pi)\caA_{x y} \pi u_0
\  =\
(I-\pi)u_1 -\chi_1\pi u_0\, .  
\end{equation*}
Since $(I-\pi)w_{-1}=0$ is equivalent to $-\caA_{yy}^{-1}(I-\pi)w_{-1}=0$, this 
proves the case 
$k=-1$ of the Theorem. 

For $k\geq 0$  reason by induction. 
Assume the case $k-1$ and prove the case $k$. The induction hypothesis is
\begin{equation}
\label{eq:recur4b}
(I-\pi) u_{\ell}  \ = \ \sum_{n=1}^{\ell} \chi_n   \pi u_{\ell-n}\quad\text{for } 0\le \ell\le k+1
\,,
\end{equation}
if and only if $(I-\pi)w_j=0$ for $-2\le j\le k-1$. For the inductive step 
 need  to treat  $j=k$ and $l=k+2$. For $k\geq 0$ one has 
\begin{equation}
\begin{aligned}
\label{eq:recur3}
(I-\pi)w_k\ =\ -(I-\pi)\Big( \caA_{yy}  u_{k+2} \ + \ \caA_{x y}  u_{k+1} 
\ + \ (\caA_{xx} - \rho(y)\partial_t^2)  u_k \Big) \,.
\end{aligned}
\end{equation}
Exploiting \eqref{eq:AyyIminusPi} and multiplying by $-\mathcal{A}_{yy}^{-1}$ yields  
\begin{equation}
\label{eq:recur4}
-\caA_{yy}^{-1}(I-\pi)w_{k}\ =\
(I-\pi)  u_{k+2} \ + \  \caA_{yy}^{-1} (I-\pi) 
\Big( \caA_{x y} u_{k+1} \ +\ (\caA_{xx} - \rho(y)\partial_t^2)  u_{k} \Big) \, .
\end{equation}
Expressing each profile in \eqref{eq:recur4} as a sum of its oscillatory and non-oscillatory part and recalling the definition
$\chi_1=-\caA_{yy}^{-1}(I-\pi )\caA_{xy}$,
yields
\eqref{eq:recur4} as
\begin{align}
\label{eq:walter}
-\caA_{yy}^{-1}(I-\pi)w_{k}& =
(I-\pi)  u_{k+2} - \chi_1\pi u_{k+1} + \caA_{yy}^{-1} (I-\pi)\Big((\caA_{xx} - \rho(y)\partial_t^2) \pi u_{k}\Big)
\cr
 &+\caA_{yy}^{-1} (I-\pi)\Big( \caA_{x y}(I-\pi) u_{k+1} \ +\ (\caA_{xx} - \rho(y)\partial_t^2) (I-\pi) u_{k} \Big)\,.
\end{align}
Use that $(I-\pi)w_{k}=0$ if and only if $\caA_{yy}^{-1}(I-\pi)w_{k}=0$. 
Thus $(I-\pi)w_k=0$ if and only if
the right hand side of \eqref{eq:walter} vanishes.  Using the induction hypothesis, \eqref{eq:recur4b} holds for 
$(I-\pi) u_{k+1}$ and $(I-\pi) u_{k}$.   This yields 
\begin{align*}
(I-\pi) u_{k+2}
\ &=\
\chi_1\pi u_{k+1} - \caA_{yy}^{-1} (I-\pi)\Big((\caA_{xx} - \rho(y)\partial_t^2) \pi u_{k}\Big)\\
&\quad -\caA_{yy}^{-1} (I   -\pi)\left( \caA_{x y}\sum_{n=1}^{k+1} \chi_n   \pi u_{k+1-n} \ +
 (\caA_{xx} - \rho(y)\partial_t^2) \sum_{n=1}^{k} \chi_n   \pi u_{k-n} \right)\,.\\
\ &=\ \chi_1\pi u_{k+1} - \caA_{yy}^{-1}(I-\pi)\Big((\caA_{xx} - \rho(y)\partial_t^2) \pi u_{k}+\caA_{x y}\chi_1\pi u_k\Big) \\
&\quad -\caA_{yy}^{-1} (I-\pi)\left(
\caA_{x y}\sum_{n=2}^{k+1} \chi_n   \pi u_{k+1-n} + (\caA_{xx} - 
\rho(y)\partial_t^2)\sum_{n=1}^{k} \chi_n   \pi u_{k-n}\right).
\end{align*}
By definition $- \caA_{yy}^{-1} (I-\pi)\Big((\caA_{xx} - \rho(y)\partial_t^2) \pi u_{k}+\caA_{x y}\chi_1\pi u_k\Big)=\chi_2\pi u_k$.
Therefore
\begin{align*}
(I-\pi) &u_{k+2}\\
=&\chi_1\pi u_{k+1} + \chi_2 \pi u_k - \caA_{yy}^{-1} (I-\pi)\Big(
\caA_{x y}\sum_{n=2}^{k+1} \chi_n   \pi u_{k+1-n} + (\caA_{xx} - 
\rho(y)\partial_t^2)\sum_{n=1}^{k} \chi_n   \pi u_{k-n}\Big)\\
=& \chi_1\pi u_{k+1} + \chi_2 \pi u_k + \sum_{n=1}^k
\Big(-\caA_{yy}^{-1} (I-\pi)\Big)\Big[\caA_{x y}\chi_{n+1} + (\caA_{xx} - 
\rho(y)\partial_t^2)\chi_n\Big]\pi u_{k-n}\\
=&\chi_1\pi u_{k+1} + \chi_2 \pi u_k + \sum_{n=1}^k\chi_{n+2}\pi u_{k-n}
=\sum_{n=1}^{k+2}\chi_n\pi u_{k+2-n}\, ,
\end{align*}
where  the last line  uses the definition (\ref{eq:recur1}) 
of $\chi_{n+2}$. The last identity is the  desired formula for $(I-\pi) u_{k+2}$.
The proof is complete.
\hfill
\qed
\vskip.2cm

\begin{remark}
\label{rem:BP1}
Theorem \ref{thm:osc} has a particularly elegant form
for profiles so that $(1-\pi)w_\ell=0$ for all $\ell$.  This holds if 
and only if 
the  formal power series in $\eps$ for $u$ is given in terms of the 
series 
for $\pi u$ by 
$$
\sum_{n=0}^\infty \eps^n\, u_n\ =\ 
\Big(\sum_{\ell=0}^\infty \eps^\ell \chi_\ell\Big)
\,   
\Big( \sum_{k=0}^\infty\eps^k \pi u_k\Big)\,.
$$
The elliptic analogue was observed
by Bakhvalov and Panasenko \cite{BP}.
\end{remark}

\subsubsection{The nonoscillatory hierarchy} 
\label{sec:nonoscillatory}
Next analyse the equations determining the 
non oscillatory parts $\pi u_n$ of the profiles.
Equations \eqref{eq:hierarchy1} and \eqref{eq:epsk} are multiplied on the left by $\pi$. 
Since $\pi \mathcal{A}_{yy}=0$ and $\pi\mathcal{A}_{xy}\pi=0$, one has $\pi w_{-2}=0$ and $\pi w_{-1}=0$. 
For $k\geq 0$ the $\mathcal{A}_{yy}$ terms are eliminated and \eqref{eq:epsk} with $k\geq 0$ simplifies to
\begin{equation}
\label{eq:piwk1}
\pi w_k=
\pi
\Big[\rho(y)\partial_t^2 u_k  
\ -\ 
 \mathcal{A}_{xx} u_k - 
 \mathcal{A}_{xy} u_{k+1} 
  \Big].
\end{equation}

Exploiting \eqref{eq:piwk1} with $k=0$, 
writing $u_1=\pi u_1 + (1-\pi)u_1$ and using the recurrence 
from \eqref{eq:osc} yields 
\begin{equation*}
\pi \caA_{xy}u_1\ 
\stackrel{\pi\mathcal{A}_{xy}\pi=0}{=}
\
\pi \caA_{xy}(I-\pi)u_1
\stackrel{\eqref{eq:osc}}{=}
\pi \caA_{xy}\chi_1\pi u_0
\ \stackrel{\eqref{eq:recur1}}{=}\ 
-\,\pi 
\caA_{xy}\,
\caA_{yy}^{-1}\, (I-\pi)\,\caA_{xy} \pi u_0\, . 
\end{equation*}
Since  $u_0=\pi u_0$,  this yields
\begin{equation*}
\pi w_0=a_2^*(\partial_t,\partial_x) \pi u_0
\end{equation*}
with the homogenized wave operator defined as
\begin{equation}
\label{eq:Tom}
a_2^*(\partial_t,\partial_x) \ :=\ (\pi \rho)\partial_t^2
\ -\ 
\dive_x\,( \pi a ) \, \grad_x  \ +\ 
\pi \,
\caA_{xy}(I-\pi)
\caA_{yy}^{-1}(I-\pi)\,\caA_{xy}\,\pi\,.
\end{equation}

\begin{remark}
The homogenized wave operator $a^*_2$ coincides with the
 formula from classical homogenization theory 
\cite{BP}, \cite{BLP}, \cite{JKO}, \cite{SP}.
\end{remark}


\begin{definition} 
Scalar partial differential operators
$a^*_n(\partial_t,\partial_x)$ mapping functions of $t,x$ to functions of
$t,x$
 are defined for $n\geq1$ by
\begin{equation}
\label{eq:recur33}
a^*_n(\partial_t,\partial_x) \ = \ \pi \Big( 
\big(\rho(y)\partial_t^2-\caA_{xx}\big)\chi_{n-2}-
\caA_{x y}\chi_{n-1}\Big) . 
\end{equation}
\end{definition}

\begin{remark}
{\bf i.}  The operators $a_n^*$ have constant coefficients.  
{\bf ii.}  The operator 
 $a^*_n$ is homogeneous of degree $n$.
{\bf iii.}  The symbol  $a_n^*(\partial_t,\partial_x)$  
contains only even powers of $\partial_t$. 
{\bf iv.}  The definitions of $\chi_0,\chi_{-1}$ imply that $a_1^*=0$.
\end{remark}

\begin{theorem}
\label{thm:recur1} 
Suppose that the formal power series $U$ and corresponding $W$ 
satisfy the conditions of Theorem \ref{thm:osc} for some $k\in\mathbb{Z}$ with $k\geq -2$. 
Then $\pi w_{-2}=\pi w_{-1}=0$ and  
for $0\le j\le k+1$,
\begin{equation}
\begin{aligned}
\label{eq:recur2}
\pi w_j \ =\ 
\sum_{n=0}^{j}\,
a^*_{n+2}(\partial_t,\partial_x) \pi  u_{j-n}  
\,.
\end{aligned}
\end{equation}
\end{theorem}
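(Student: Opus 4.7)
The statement has two pieces. The claim $\pi w_{-2}=\pi w_{-1}=0$ is an immediate algebraic consequence of Lemma \ref{lem:cell} together with the fact (established in the $k=-2$ step of Theorem \ref{thm:osc}) that $u_0=\pi u_0$. The formula for $\pi w_j$ with $0\le j\le k+1$ is obtained by applying $\pi$ to the reduced identity \eqref{eq:piwk1}, substituting the oscillatory profiles by the representation \eqref{eq:osc}, and then recognizing the collected coefficients as the operators $a^*_{n+2}$ defined in \eqref{eq:recur33}. No deep obstacle is expected; this is essentially a bookkeeping step on top of Theorem \ref{thm:osc}.

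\textbf{Step 1: the negative orders.} Apply $\pi$ to the two lines of \eqref{eq:hierarchy1}. Since $\pi\mathcal{A}_{yy}=0$ the contributions of $\mathcal{A}_{yy}u_0$ and $\mathcal{A}_{yy}u_1$ drop out, leaving $\pi w_{-2}=0$ and $\pi w_{-1}=-\pi\mathcal{A}_{xy}u_0$. Because $u_0=\pi u_0$ and $\pi\mathcal{A}_{xy}\pi=0$, the remaining term vanishes as well.

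\textbf{Step 2: reduction for $j\ge 0$.} Starting from \eqref{eq:piwk1} and writing $u_j=\pi u_j+(I-\pi)u_j$ and $u_{j+1}=\pi u_{j+1}+(I-\pi)u_{j+1}$, the identity $\pi\mathcal{A}_{xy}\pi=0$ kills the term $\pi\mathcal{A}_{xy}\pi u_{j+1}$. Since $j\le k+1$ forces $j+1\le k+2$, the hypothesis \eqref{eq:osc} of Theorem \ref{thm:osc} applies at indices $\ell=j$ and $\ell=j+1$, giving
\begin{equation*}
(I-\pi)u_j=\sum_{n=1}^{j}\chi_n\,\pi u_{j-n},\qquad (I-\pi)u_{j+1}=\sum_{n=1}^{j+1}\chi_n\,\pi u_{j+1-n}.
\end{equation*}
Substituting into \eqref{eq:piwk1} and using $\chi_0=I$ to rewrite $\pi u_j$ as $\chi_0\pi u_j$ produces
\begin{equation*}
\pi w_j=\sum_{n=0}^{j}\pi\bigl(\rho(y)\partial_t^2-\mathcal{A}_{xx}\bigr)\chi_n\,\pi u_{j-n}\;-\;\sum_{n=1}^{j+1}\pi\mathcal{A}_{xy}\chi_n\,\pi u_{j+1-n}.
\end{equation*}

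\textbf{Step 3: reindexing and identification.} Shift the second sum via $n\mapsto n+1$ so that both sums run over $n=0,\ldots,j$ (the $n=j+1$ term in the original is really the $n=j$ term after the shift). Collecting factors of $\pi u_{j-n}$ yields
\begin{equation*}
\pi w_j=\sum_{n=0}^{j}\pi\Bigl[\bigl(\rho(y)\partial_t^2-\mathcal{A}_{xx}\bigr)\chi_n-\mathcal{A}_{xy}\chi_{n+1}\Bigr]\,\pi u_{j-n}.
\end{equation*}
Comparing with the definition \eqref{eq:recur33} of $a^*_{n+2}$ (where $n-2$ is replaced by $n$ and $n-1$ by $n+1$) identifies the bracket as $a^*_{n+2}(\partial_t,\partial_x)$, which is exactly \eqref{eq:recur2}. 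The verification for $j=0$ recovers the formula $\pi w_0=a_2^*(\partial_t,\partial_x)\pi u_0$ derived above, providing a sanity check.

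\textbf{Main difficulty.} There is no conceptual hurdle; the only care needed is to verify that the upper limit $k+1$ on $j$ in the conclusion corresponds correctly to the upper limit $k+2$ on $\ell$ in the hypothesis of Theorem \ref{thm:osc} (so that both $(I-\pi)u_j$ and $(I-\pi)u_{j+1}$ are available as explicit expressions), and to handle the reindexing of the $\mathcal{A}_{xy}$ sum without off-by-one errors.
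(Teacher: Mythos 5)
Your proof is correct and follows the same route as the paper's: apply $\pi$ to \eqref{eq:piwk1}, kill $\pi\mathcal{A}_{xy}\pi u_{j+1}$, substitute \eqref{eq:osc} for the oscillatory parts at indices $j$ and $j+1$, reindex the $\mathcal{A}_{xy}$ sum, and recognize the collected operator as $a^*_{n+2}$ from \eqref{eq:recur33}. The only cosmetic slip is the phrase ``shift the second sum via $n\mapsto n+1$'' — the reindexing is $n\mapsto n-1$ — but your parenthetical makes the intent unambiguous and the resulting formula is right.
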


\begin{remark}
\label{rem:BP2}
The result is particularly elegant for profiles so that $(1-\pi )w_n=0$ for all 
$n$.  In that case the formal power series in $\eps$ for the residual
is given in terms of the nonosicllating parts by
$$
\sum_{j=0}^\infty \eps^{j} \pi w_j 
 \ =\ 
\Big( \sum_{n=0}^\infty \eps^n a_{n+2}^*\Big)\Big(\sum_{m=0}^\infty\eps^m \pi u_m\Big)\,.
$$
The elliptic analogue was observed in \cite{BP}.
\end{remark}

{\bf Proof.} The cases $k=-2$ and $k=-1$ have already been discussed above. Let $k\geq 0$ and fix 
$0\leq j\leq k+1$. 
Using 
 $\pi \caA_{xy} \pi = 0$ and $\pi \caA_{yy}=0$ provides
\begin{equation}
\label{eq:recur6}
\pi w_{j} \ =\ 
\pi \Big((\rho(y)\partial_t^2-\caA_{xx})\pi  u_{j} + (\rho(y)\partial_t^2-\caA_{xx})(I-\pi) u_{j}
 -\caA_{xy} (I-\pi)  u_{j+1} 
\Big)  \, .
\end{equation}
Since we assumed that the conditions of Theorem \ref{thm:osc} hold for $k$ and since $j, j+1\leq k+2$, 
we can replace $(I-\pi)u_l$ in (\ref{eq:recur6}), for $l=j, j+1$ according to 
formula \eqref{eq:osc}. This yields
\begin{align*}
\pi w_j \ &=\ 
\pi (\rho(y)\partial_t^2-\caA_{xx}) \pi  u_{j}+
\pi\Big((\rho(y)\partial_t^2-\caA_{xx})\sum_{n=1}^j\chi_n\pi u_{j-n}
-\caA_{xy}\sum_{n=1}^{j+1}\chi_n\pi u_{j+1-n}\Big) \\
\ &=\ \pi\Big((\rho(y)\partial_t^2-\caA_{xx}) -\caA_{xy}\chi_1\Big)\pi u_j 
+ \pi \Big(\sum_{n=1}^j(\rho(y)\partial_t^2-\caA_{xx})\chi_n\pi u_{j-n}-
\sum_{n=2}^{j+1}\caA_{xy}\chi_n \pi u_{j+1-n}\Big)
\end{align*}
Regrouping terms and recalling that $\chi_0=I$ yields
\begin{equation}
\label{eq:piwk}
\pi w_j\ =\ 
\pi \sum_{n=0}^{j} \Big((\rho(y)\partial_t^2-\caA_{xx})\chi_{n}-\caA_{xy} \chi_{n+1}\Big) 
\pi u_{j-n}   \, .
\end{equation}
By definition of the effective operators $a^*_{n+2}$, Equation \eqref{eq:piwk} is equivalent to
\begin{equation}
\label{eq:alla}
\pi w_j\ =\ 
\sum_{n=0}^{j} a^*_{n+2}(\partial_t,\partial_x) \pi  u_{j-n}
 \, .
\end{equation}
This completes the proof. 
\hfill
\qed
\vskip.2cm

\begin{remark}  $a^*_n$ is 
a homogeneous polynomial of degree $n$ in $(\partial_t,\partial_x)$
Formula (\ref{eq:recur33}) shows that  the highest degree 
of $\partial_t$ in $a^*_n$ comes from $\chi_{n-1}$ or $\partial^2_t\chi_{n-2}$. 
When $\rho$ is independent of $y$, Remark \ref{rem:rhoconstant1} yields that $\chi_{n-1}$ and $\chi_{n-2}$ are of 
degree $\leq n-3$ and $\leq n-4$, respectively, with respect to time $t$. Therefore, when
$\rho$ is constant,
$a^*_n$ is of  order $\le n-2$ in $\partial_t$ for $n>2$.
\end{remark}

The next result shows that the equation \eqref{eq:alla} has half
as many terms as it seems. The proof depends on   a precise
combinatorial formula for
$\chi_k$. The elliptic analogue of Theorem \ref{thm:recur2}
was proved  
by a quite different variational argument in \cite{SmCh}. 

\begin{theorem}
\label{thm:recur2}
For any odd $n\ge 1$, the homogenized operator $a_n^*$  vanishes. 
That is  for $m\ge 0$, $a^*_{2m+1}=0$.
\end{theorem}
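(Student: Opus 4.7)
The plan is to combine the combinatorial expansion of $\chi_k$ with the self-adjointness of the three building blocks $\caA_{yy}$, $\caA_{xy}$ and $\caA_{xx}-\rho(y)\partial_t^2$ to show that $a^*_n$ is a self-adjoint constant-coefficient differential operator in $(\partial_t,\partial_x)$; since $a^*_n$ is simultaneously homogeneous of degree $n$, this self-adjointness forces $a^*_n=0$ whenever $n$ is odd.

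First I would unroll the recursion of Definition~\ref{def.chi-k}. Setting
\[
M := -\caA_{yy}^{-1}(I-\pi),\quad X_1 := \caA_{xy},\quad X_2 := \caA_{xx}-\rho(y)\partial_t^2,
\]
it reads $\chi_k=M\bigl[X_1\chi_{k-1}+X_2\chi_{k-2}\bigr]$ with $\chi_{0}=I$ and $\chi_{-1}=0$. An easy induction on $k$ then gives the closed form
\[
\chi_k \;=\; \sum_{(s_1,\ldots,s_r)\in C(k)} (MX_{s_1})(MX_{s_2})\cdots(MX_{s_r}),
\]
where $C(k)$ denotes the set of compositions of $k$ into parts in $\{1,2\}$. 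Substituting into formula \eqref{eq:recur33}, rewritten as $a^*_n=-\pi\bigl[X_1\chi_{n-1}+X_2\chi_{n-2}\bigr]$, yields
\[
a^*_n \;=\; -\sum_{c\in C(n)}\pi\,T(c),\qquad T(c)\;:=\;X_{s_0}(MX_{s_1})\cdots(MX_{s_r}),
\]
where the outer sum ranges over compositions $c=(s_0,s_1,\ldots,s_r)$ of $n$ with parts in $\{1,2\}$.

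Second, I would verify the self-adjoint structure. On $L^2(\RR^{1+d}\times\TT^d)$ the operators $\caA_{yy}$, $\caA_{xy}$, $\caA_{xx}$ and $\rho\partial_t^2$ are each formally self-adjoint, by symmetry of $a(y)$, reality of $\rho(y)$ and integration by parts; the orthogonal projection $I-\pi$ and the partial inverse $\caA_{yy}^{-1}$ on $(I-\pi)H^{-1}(\TT^d)$ are self-adjoint as well. Hence $M$, $X_1$ and $X_2$ are formally self-adjoint, and the adjoint of the composition reverses the order of the factors:
\[
T(c)^* \;=\; X_{s_r}(MX_{s_{r-1}})\cdots(MX_{s_0}) \;=\; T(c^R),\qquad c^R:=(s_r,\ldots,s_0).
\]
Pairing against a test function $\psi(t,x)$, viewed as a $y$-constant element of $L^2(\RR^{1+d}\times\TT^d)$, and integrating over $y$ promotes this into the identity $(\pi T(c))^*=\pi T(c^R)$ between operators on $L^2(\RR^{1+d})$. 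Third, the reversal $c\mapsto c^R$ is an involution on $C(n)$, so
\[
\Bigl(\sum_{c\in C(n)}\pi\,T(c)\Bigr)^* \;=\; \sum_{c}\pi\,T(c^R) \;=\; \sum_{c}\pi\,T(c),
\]
showing that $a^*_n$ is self-adjoint as a constant-coefficient differential operator on functions of $(t,x)$. On the other hand, being homogeneous of degree $n$ with constant coefficients, its formal adjoint equals $(-1)^n a^*_n$. Combining these two facts gives $a^*_n=(-1)^n a^*_n$, which forces $a^*_n=0$ whenever $n$ is odd.

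The main obstacle I anticipate is the bookkeeping of Step~2: the unbounded operators $T(c)$ must act on a common dense subspace of test functions for which the formal self-adjoint manipulations are legitimate. Lemma~\ref{lem:wellposedchi} already shows that $\chi_k\varphi(t,x)\in(I-\pi)H^1(\TT^d)$ for smooth $\varphi$; a parallel verification that $T(c)\varphi$ lives in a space stable under the relevant integrations by parts, applied to Schwartz data in $(t,x)$, makes the adjoint identity $T(c)^*=T(c^R)$ rigorous, after which the rest of the argument is purely algebraic.
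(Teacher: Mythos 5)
Your argument is correct, and it amounts to a cleaner repackaging of the same core idea as the paper's proof (combinatorial expansion of $\chi_k$ into words plus a transpose/palindrome argument), with one meaningful difference in the bookkeeping. The paper takes the $L^2(\TT^d)$-adjoint (integration by parts in $y$ only), so $\caA_{xy}^T=-\caA_{xy}$ is skew while $\caA_{yy}$, $\caA_{xx}-\rho\partial_t^2$ are symmetric; the factor $(-1)^k$ then falls out from the observation that the number of $\caA_{xy}$ letters in each word of $W_k$ has the parity of $k$, and the paper needs the auxiliary identities $C_jG=GD_j$ (the ``$G$-algebra'') to rewrite the adjointed integrand back into the defining form of $a^*_k$. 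You instead take the full $L^2(\RR^{1+d}\times\TT^d)$-adjoint, under which $\caA_{xy}$, $\caA_{xx}-\rho\partial_t^2$, $\caA_{yy}^{-1}(I-\pi)$ are all self-adjoint, so the word sum $a^*_n=-\sum_c\pi\,T(c)$ is manifestly symmetric under reversal $c\mapsto c^R$ and hence self-adjoint on $L^2(\RR^{1+d})$; the sign $(-1)^n$ then comes purely from the $(t,x)$-adjoint of a constant-coefficient operator homogeneous of degree $n$. This eliminates the $G$-algebra step entirely and makes the parity mechanism transparent. Two small points to tighten: the identity $(\pi T(c))^*=\pi T(c^R)$ uses crucially that both test functions are $y$-constant so the $(I-\pi)$ part of $T(c^R)\psi$ drops out under the $y$-average --- this is worth stating explicitly; and the domain issue you flag is genuine but harmless, since the argument of Lemma \ref{lem:wellposedchi} applies verbatim to each partial word $T(c)\varphi$, not only to $\chi_k\varphi$, because each application of $M$ lands in $(I-\pi)H^1(\TT^d)$ and each $X_i$ maps $H^1(\TT^d)$ into $H^{-1}(\TT^d)$.
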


{\bf Proof.}
Introduce
$$
C_1\ :=\  - \caA_{yy}^{-1} (I-\pi) \caA_{x y} \quad \mbox{ and } \quad 
C_2\ 
:=\  -\caA_{yy}^{-1} (I-\pi) (\caA_{xx}-\rho(y)\partial_t^2) \,.
$$
The operator $\caA_{yy}^{-1}(I-\pi)$  acts only on the $y$ variable and 
is continuous from $H^{-1}(\TT^d)\to H^{1}(\TT^d)$.
The operators $C_j$   are homogeneous polynomials
of degree $j$ in $(\partial_t,\partial_x)$, whose coefficients are operators in $y$. 
In the proof we integrate by parts with respect to $y$ and not with respect to
$t,x$.  With these $C_j$,
\ref{eq:recur1} yields 
\begin{equation}
\label{eq:albert}
\chi_k\ =\  C_1 \chi_{k-1} + C_2 \chi_{k-2}\,,
\qquad
 k\geq1 
 \,.
\end{equation}
Replace $\chi_{k-1}$ and $\chi_{k-2}$
using the two earlier instances of the recurrence. 
Continuing, leads to an expression
$$
\chi_k\ =\ W_k \chi_0,
$$
where only the earliest operator $\chi_0$ appears.
Equation \eqref{eq:albert} implies that 
\begin{equation}
\label{eq:recur7}
\begin{aligned}
W_k & \ =\  
C_1 W_{k-1} + C_2 W_{k-2}\ 
\,.
\end{aligned}
\end{equation}
{\sl Equation \eqref{eq:recur7} implies that $W_k$
 is the sum 
of all words written with the two "letters" $C_1$ and $C_2$ 
such that the number of letters satisfies $\# C_1 + 2\,\# C_2 = k$. }
Each word is a homogeneous differential operator of degree $k$ in $\partial_{x}$. 
Separating the words into two groups, those that end in $C_1$
and those that end in $C_2$ implies that 
\begin{equation*}
W_k
\ =\
W_{k-1} C_1 + W_{k-2} C_2\,. 
\end{equation*}

Denote with an exponent $T$ the $L^2(\TT^d)$ adjoint.  
Integration by parts in $y$ shows that $\caA_{xy}^T=-\caA_{x y}$, while $\caA_{yy}$ and $\caA_{xx}$ are selfadjoint. 
Define  operators
\begin{align*}
D_1& := - \caA_{x y} \caA_{yy}^{-1} (I-\pi) = - C_1^T\,,
\cr
D_2 & := -(\caA_{xx}-\rho(y)\partial_t^2) \caA_{yy}^{-1} (I-\pi) =  C_2^T \, .
\end{align*}
An induction shows that
$$
W_k^T = (-1)^k Z_k \quad \mbox{ with } \quad 
Z_k = D_1 Z_{k-1} + D_2 Z_{k-2} \,.
$$
{\sl
Therefore $Z_k$ is the sum of all words written 
with the two letters $D_1$ and $D_2$ 
such that the number of letters satisfy $\# D_1 + 2\,\#D_2 = k$. }

Introduce $G := \caA_{yy}^{-1} (I-\pi)$ that satisfies
$$
C_1G = G D_1, \qquad C_2 G = G D_2 , \qquad 
W_k G = G Z_k \, . 
$$
Since $\chi_0(y) =I$, definition (\ref{eq:recur33}) can be rewritten, 
by using $\caA_{ x y}^T=-\caA_{xy}$, $(\caA_{xx}-\rho\partial_t^2)^T = (\caA_{xx}-\rho\partial_t^2)$ 
and $W_k^T = (-1)^k Z_k$, as 
$$
\begin{aligned}
a^*_k(\partial_t,\partial_x) & = \int_{\T^d} \Big( (\rho\partial^2_t-\caA_{xx})W_{k-2} \chi_0(y)
-\caA_{x y} W_{k-1} \chi_0(y) \Big)\chi_0(y)\, dy 
\cr
& = \int_{\T^d} \Big(W_{k-2} \chi_0(y)(\rho\partial^2_t-\caA_{xx})\chi_0(y)
+ W_{k-1} \chi_0(y) \caA_{xy} \chi_0(y)\Big)  dy 
\cr
& = (-1)^k \int_{\T^d} \chi_0(y) \Big(
Z_{k-2}(\rho\partial^2_t-\caA_{xx})\chi_0(y)
- Z_{k-1} \caA_{xy} \chi_0(y) \Big) dy \, .
\end{aligned}
$$
The properties of $Z_k$ and $W_k$ imply that 
$$
\begin{aligned}
-Z_{k-1} \caA_{x y} & =\  \big(D_1 Z_{k-2} + D_2 Z_{k-3}\big) \caA_{x y} 
\cr
& =\big( (\rho(y)\partial_t^2-\caA_{xx}) G Z_{k-3}-\caA_{xy} G Z_{k-2}\big)\caA_{x y} 
\cr
& =\big((\rho(y)\partial_t^2-\caA_{xx}) W_{k-3}G-\caA_{xy} W_{k-2}G \big)\caA_{x y} 
\cr
& = \
\big(\rho(y)\partial_t^2-\caA_{xx}\big) W_{k-3} C_1-\caA_{xy} W_{k-2} C_1.
\end{aligned}
$$
Similarly, 
$$
\begin{aligned}
Z_{k-2}\big(\rho(y)\partial^2_t-\caA_{xx}\big) & =\
\big(D_1 Z_{k-3} + D_2 Z_{k-4}\big)\big(\rho(y)\partial^2_t-\caA_{xx}\big)
\cr
& = 
((\rho(y)\partial^2_t-\caA_{xx})G Z_{k-4}-\caA_{xy} G Z_{k-3}) (\rho(y)\partial^2_t-\caA_{xx})
\cr
& = ((\rho(y)\partial^2_t-\caA_{xx})W_{k-4}G -\caA_{x y} W_{k-3} G )
 (\rho(y)\partial^2_t-\caA_{xx})
\cr
& = \
(\rho(y)\partial^2_t-\caA_{xx})W_{k-4}C_2-\caA_{x y} W_{k-3} C_2\,.
\end{aligned}
$$
Summing yields 
\begin{align*}
Z_{k-2}(\rho(y)\partial^2_t-&\caA_{xx}) -Z_{k-1} \caA_{x y}=(\rho(y)\partial^2_t-\caA_{xx})(W_{k-3} C_1+W_{k-4}C_2)\\
&-
\caA_{x y}( W_{k-2} C_1 + W_{k-3} C_2)
 + (\rho(y)\partial^2_t-\caA_{xx})W_{k-2}-\caA_{x y}W_{k-1}.
\end{align*}
Therefore   
\begin{align*}
a^*_k(\partial_t,\partial_x) &= (-1)^k \int_{\T^d} \chi_0(y) \Big(
(\rho\partial_t^2-\caA_{xx}) W_{k-2} \chi_0(y) 
 -\caA_{x y} W_{k-1} \chi_0(y) 
\Big) dy
\cr &= (-1)^k a^*_k(\partial_t,\partial_x) \, .
\end{align*}
For odd $k$, this implies $a_k^*(\partial_t,\partial_x)=0$.
\hfill
\qed
\vskip.2cm
Consider the homogenization problem \eqref{eq:homprob}. 
The goal  is to describe the 
behavior of the solution $u^\eps$ by investigating formal power series $U$ as in \eqref{eq:utaylor}. 
The classical algorithm is to choose the series $U$ such that $W-f\sim 0$, i.e to choose the profiles $u_n$ such that for all $t,x,y$
$$
w_0(t,x,y)=f(t,x)\,,
\qquad
\forall \, {0\neq n \ge -2},
\quad
w_n=0\,.
$$
The equation $W-f\sim 0$ is satisfied if and only if $\pi(W-f)\sim 0$ and $(I-\pi)(W-f)\sim 0$. 
The source term $f(t,x)$ is smooth and non oscillatory, $(I-\pi)f=0$. For the power series $U$ 
it follows that
 $(I-\pi)w_k=0$ for all $k\geq -2$. According to Theorem \ref{thm:osc} such a power series satisfies 
\begin{equation}
\label{eq:i-piuell1}
(I-\pi) u_{\ell}  \ = \ \sum_{n=1}^{\ell} \chi_n   \pi u_{\ell-n}\quad\text{for all }l\geq 0. 
\end{equation} 
Next analyse the equations determining the non oscillatory parts $\pi u_n$ of the profiles.  
Theorem \ref{thm:recur1} shows that if the oscillatory parts satisfy
\eqref{eq:i-piuell1}, then
\begin{equation*}
\pi w_j \ =\ 
\sum_{n=0}^{j}\,
a^*_{n+2}(\partial_t,\partial_x) \pi  u_{j-n}\,.
\end{equation*}
Theorem \ref{thm:recur2} implies that only terms of the same parity appear,  
$$
\pi
w_j
\ =\
a^*_2(\partial_t,\partial_x) \pi u_j
\ +\
\sum_{2\le n\atop 2n +k=j+2}
 a_{2n}^*(\partial_t,\partial_x) \pi u_k
\,.
$$
The classical algorithm is to set $\pi w_0=f$ and $\pi w_j=0$ for $-2\leq j\neq 0$, which yields
the following hierarchy of equations for the 
$\pi u_k$,
\begin{equation}
\label{eq:hierarchy2}
\begin{aligned}
\eps^0: \qquad a^*_2(\partial_t,\partial_x) \pi u_0  \ &=\ f
\cr
\eps^1: \qquad a^*_2(\partial_t,\partial_x) \pi u_1  \ &=\ 0
\cr
\eps^2: \qquad a^*_2(\partial_t,\partial_x) \pi u_2  \ &=\  -a_4^*(\partial_t, \partial_x)\pi u_0
\cr
\eps^3: \qquad a^*_2(\partial_t,\partial_x) \pi u_3  \ &=\  - a_4^*(\partial_t, \partial_x)\pi u_1
\cr
\eps^4:\qquad a^*_2(\partial_t,\partial_x) \pi u_4  \ &= \ - a_4^*(\partial_t, \partial_x)\pi u_2 \ -\  a_6^*(\partial_t,\partial_x) \pi u_0
\cr
\eps^5:\qquad a^*_2(\partial_t,\partial_x) \pi u_5  \ &= \  -a_4^*(\partial_t, \partial_x)\pi u_3\ -\ a_6^*(\partial_t,\partial_x) \pi u_1
\cr
\eps^6: \qquad a^*_2(\partial_t,\partial_x) \pi u_6  \ &= \ -a_4^*(\partial_t, \partial_x)\pi u_4\ -\ a_6^*(\partial_t,\partial_x) \pi u_2
\ -\  a_8^*(\partial_t,\partial_x) \pi u_0
\cr
\eps^7: \qquad a^*_2(\partial_t,\partial_x) \pi u_7  \ &= \  -a_4^*(\partial_t, \partial_x)\pi u_5\ -\  a_6^*(\partial_t,\partial_x) \pi u_3
\ -\  a_8^*(\partial_t,\partial_x) \pi u_1
\end{aligned}
\end{equation} 
The $\eps^0$-order equation yields the classical homogenized wave equation 
$
a_2^*(\partial_t,\partial_x) \pi u_0 \ =\ f.
$

 \vskip.2cm
 
\subsection{Leap frog and secular growth}
\label{sec:secular}
The equations for the odd subscripts are decoupled from those
with even subscripts. The equations repeat in pairs. This is the
{\it leap frog structure of the non oscillatory hierarchy}.
Starting with $n=1$ one concludes by
induction in steps of two,  that $\pi u_n=0$ for all odd $n$.

The leap frog structure implies that secular growth
is slow. Without the leap frog structure one would have 
\hskip-2pt
\footnote{The notation $A\lesssim B$ means that there is a constant
$C>0$, independent of $A$ and $B$, so that $A\le C\, B$. }
$\,|u_n| \lesssim t^n$ 
instead of the  $t^{n/2}$ in next theorem.

\begin{theorem}[\bf Secular growth]
\label{thm:secular}
If there is a $\ut>0$ so that $f=0$ for $t>\ut$, 
then for each non zero $\alpha\in \NN^{1+d}\setminus \{0\}$ and every $k=0,1,2,\dots$ there exists a constant
$C$ depending on $f,\alpha$ and $k$ so that for all $t\ge 0$,
\begin{equation}
\label{eq:leapfrog}
\big\|
\partial_{t,x}^\alpha u_{2k}(t)
\,,\,
 \partial_{t,x}^\alpha u_{2k+1}(t)\big\|_{L^2(\RR^d\times\TT^d) }
\ \le \ C \, \langle t \rangle^k\,,
\qquad
\langle t \rangle := \sqrt{1+t^2}\,.
\end{equation}
\end{theorem}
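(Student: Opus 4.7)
The plan has three parts: reduce the estimate on the full profile $u_n$ to one on the nonoscillating part $\pi u_n$ via Theorem \ref{thm:osc}, exploit the parity decoupling in the hierarchy \eqref{eq:hierarchy2} afforded by Theorem \ref{thm:recur2}, and run an induction on $k$ using the standard energy inequality for the constant-coefficient homogenized wave operator $a_2^*$.

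The reduction step uses Theorem \ref{thm:osc} to write
\[
u_n\ =\ \pi u_n\ +\ \sum_{j=1}^n\chi_j(y,\partial_t,\partial_x)\,\pi u_{n-j}.
\]
By Lemma \ref{lem:wellposedchi} each $\chi_j$ is a homogeneous $(\partial_t,\partial_x)$-operator of degree $j$ with fixed coefficients in $H^1(\TT^d)$, so the bound on $\|\partial_{t,x}^\alpha u_n(t)\|_{L^2(\RR^d\times\TT^d)}$ reduces to bounds of the form $\|\partial_{t,x}^\beta\pi u_m(t)\|_{L^2(\RR^d)}$ with $m\le n$ and $|\beta|\le |\alpha|+n-m$. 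Since $|\alpha|\ge 1$, all such $|\beta|\ge 1$, so the derivative-level energy estimate below will suffice.

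For the hierarchy, Theorem \ref{thm:recur2} ensures that only even-order operators $a_{2j+2}^*$ appear on the right of \eqref{eq:hierarchy2}, so the equations split by parity. For odd $n$, the source involves only odd-indexed $\pi u_m$ ($m<n$); starting from $a_2^*\pi u_1=0$ with zero past data, uniqueness for the wave equation gives $\pi u_1\equiv 0$, and induction in steps of two yields $\pi u_{2k+1}\equiv 0$ for all $k$. It remains to bound the even profiles. I would run an induction on $k$ based on the classical energy inequality for $a_2^*$: because $a_2^*$ is a constant-coefficient, second-order hyperbolic operator (with $\pi\rho>0$ and $\pi a$ positive definite), $\partial^\gamma$ commutes with $a_2^*$, and solutions $v$ of $a_2^* v=g$ vanishing for $t<0$ satisfy
\[
\|\nabla_{t,x}\partial^\gamma v(t)\|_{L^2(\RR^d)}\ \le\ C\int_0^t\|\partial^\gamma g(s)\|_{L^2(\RR^d)}\,ds.
\]
Base case $k=0$: $a_2^*\pi u_0=f$ with $f$ compactly supported in time, so this inequality gives the uniform bound $\|\partial^\beta\pi u_0(t)\|_{L^2(\RR^d)}\le C_\beta$ for every $|\beta|\ge 1$. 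For the inductive step I would start from
\[
a_2^*\,\pi u_{2k}\ =\ -\sum_{j=1}^k a_{2j+2}^*(\partial_t,\partial_x)\,\pi u_{2(k-j)},
\]
apply $\partial^\beta$, commute with the constant coefficients of each $a_{2j+2}^*$, and invoke the inductive hypothesis at multi-indices of order up to $|\beta|+2j+2$ to bound the right-hand side in $L^2(\RR^d)$ by $C\langle s\rangle^{k-1}$, with the worst contribution coming from $j=1$. Integrating in time yields $\|\partial^\beta\nabla_{t,x}\pi u_{2k}(t)\|_{L^2}\le C\langle t\rangle^k$, which closes the induction and, via the reduction step, gives the claimed estimate for $u_{2k}$ and $u_{2k+1}$.

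The main obstacle is arranging for the polynomial exponent to advance by exactly one per induction step rather than two, and this is precisely what Theorem \ref{thm:recur2} buys. Without the vanishing of odd-order homogenized operators, couplings such as $a_3^*\pi u_{n-1}$ would appear, mixing the two columns and forcing $\langle t\rangle^n$ growth in place of $\langle t\rangle^{n/2}$. The remaining items — the $\chi_n$ reduction, the bookkeeping across multi-indices, and the energy estimate itself — are routine once $a_2^*$ is recognized as a well-posed constant-coefficient wave operator.
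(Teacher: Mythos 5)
Your proof is correct and follows essentially the same strategy as the paper's: the odd nonoscillatory profiles vanish by the leap-frog structure (Theorem \ref{thm:recur2}), the even ones are bounded by induction on $k$ via the Duhamel estimate for the constant-coefficient hyperbolic operator $a_2^*$, and the oscillatory parts are controlled through the $\chi_j$. The only difference is organizational: you run the induction purely on the nonoscillatory parts $\pi u_{2k}$ and transfer to the full profiles afterward via \eqref{eq:osc}, whereas the paper carries the oscillatory parts $(I-\pi)u_n$ through its induction using the one-step recursion \eqref{eq:epsorthog}.
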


\begin{remark}
\label{rem:Mark}
Estimate \eqref{eq:leapfrog} provides a bound on the derivatives of the $u_n$ but
not on the $u_n$
themselves.  To estimate
$u_{2k}$ or $u_{2k+1}$ use $u=\int_0^t \partial_tu\,dt$ to find
\begin{equation}
\label{eq:leapfrog2}
\big\|
u_{2k}(t)
\,,\,
u_{2k+1}(t)\big\|_{L^2(\RR^d\times\TT^d) }
\ = \ 
O(\langle t \rangle^{k+1})\,.
\end{equation}
\end{remark}

\noindent
{\bf Proof of Theorem.}
The leading term $u_0(t,x)$ satisfies
$$
a^*_2(\partial_t,\partial_x) \pi u_0\ =  \ f\,,
\qquad
\pi u_0\ =\ 0 \quad {\rm for}\quad
t<0\,.
$$
Since $f\in C^\infty_0(\RR\,;\, H^s(\RR^d))$ for all $s$, it follows
that  for $0\ne \alpha\in \NN^{1+d}$,
$\partial_{t,x}^\alpha \pi u_0\in L^\infty(\RR\,;\, L^2( \RR^d\times\TT^d
))$.
Since $u_0=\pi u_0$ this finishes the analysis of $u_0$.

One has $\pi u_1=0$.
Equation \eqref{eq:oscillu1} implies that the oscillatory part of $u_1$
satisfies
$$
\partial_{t,x}^\alpha (I-\pi )u_1
\ =\
 -\partial_{t,x}^\alpha \mathcal{A}_{yy}^{-1}\, \mathcal{A}_{xy} \, u_0
\ \in \
L^\infty(\RR\,;\, L^2(\RR^d\times\TT^d))\,.
$$
This completes the analysis of $u_1$ and therefore the case $k=0$ of the
Theorem.

The proof is by induction on $k$.
 Assuming the
result for indices $ \le k$ it
 suffices to prove the case $k+1$.

First estimate the $\pi $ projections.
Since $2(k+1)+1$ is odd,  $\pi u_{2(k+1)+1}=0$.
To estimate $\pi u_{2k+2}$,
use the equation
$$
a^*_2(\partial_t,\partial_x) \pi u_{2k+2}
\ =\
\ -\
\sum_{2\leq n \atop n +j=k+2}
 a_{2n}^*(\partial_t,\partial_x) \pi u_{2j}\,.
$$
The case
$k$  of \eqref{eq:leapfrog}  bounds the right hand side.
Since $a_{2n}^*$ is a sum of derivatives,
the inductive hypothesis
implies that for all $\beta$ including $\beta=0$, 
$$
\big\|
\partial_{t,x}^\beta
a^*_2(\partial_t,\partial_x) \pi u_{2k+2}
\big\|_{L^2(\RR^d\times \TT^d)}\ =\
O(\langle t\rangle^k)\,.
$$
It is important that the
right hand side of the equation determining $a^*_2(\partial_t,\partial_x) \pi u_{2(k+1)}$ involves only derivatives of the earlier profiles and not
the profiles themselves.
The standard energy estimate for $a^*_2(\partial_t,\partial_x)$ implies that
for $\alpha\ne 0$,
$$
\big\|
\partial_{t,x}^\alpha \pi u_{2k+2}
  \big\|_{ L^2(\RR^d\times\TT^d) }
\ = \ O(\langle t\rangle^{k+1})\,.
$$
It remains to estimate $(I-\pi) u_n$ for $n=2k+2$ and $2k+3$.

Equation \eqref{eq:epsorthog} with index $2k+2$ in place of $k+2$
expresses $ (I-\pi) u_{2k+2}$ in terms of the profiles
with indices $\le 2k+1$. Those profiles are $O(\langle t\rangle^k)$
 by the inductive hypothesis. 
This yields
$$
\big\|
\partial_{t,x}^\alpha
(I-\pi)u_{2k+2 }
\big\|_{ L^2 (\RR^d\times\TT^d) }
\  = \ O( \langle t \rangle^k)
$$
an estimate stronger than the $O(\langle t\rangle^{k+1})$
required by the Theorem.

Equation \eqref{eq:epsorthog} with index $2k+3$
in place of $k+2$
expresses
$ (I-\pi)u_{2k+3}$
in terms of the profiles
with indices $\le 2k+2$.  Those with index $\le 2k+1$ are
$O(\langle t\rangle^k)$
by the inductive hypothesis.
All the derivatives of the profile $u_{2k+2}$
have just been shown to be $O(\langle t\rangle^{k+1})$.
It follows that
$$
\big\|
\partial_{t,x}^\alpha
(I-\pi)u_{2k+3}
\big\|_{L^2(\RR^d\times\TT^d) }
\ = \ O( \langle t \rangle^{k+1})\,.
\qquad
\Box
$$
\vskip.2cm

\section{High accuracy on $t\sim \eps^{-2+\delta}$ without crimes}
\label{sec:2minusdelta}

This section is devoted to a proof of the correctness of the traditional 
two scale ansatz for times strictly smaller than $\eps^{-2}$. 

\begin{theorem}
\label{thm:errorest}
For $k\in \N$, define a truncated ansatz, constructed from the
first non oscillating profiles
$\pi u_0, \pi u_2, \dots , \pi u_{2k}$ by
$$
U^k(\eps,t,x,y)\ :=\
\sum_{n=0}^{2k}\eps^n u_n(t,x,y)\ +\
 \eps^{2k+1} (I-\pi) u_{2k+1} 
 \ +\
\eps^{2k+2} (I-\pi) u_{2k+2}\,.
$$
 The 
approximate
solution is
$
U^k(\eps, t,x, x/\eps).
$
Denote by $u^\eps$ the exact solution of \eqref{eq:homprob}.
There is a constant $C$, independent of $0<\eps\le 1$  and $t\ge 0$, 
so  
the energy of the error is bounded by
\begin{equation}
\label{eq:errbd}
\big\| \nabla_{t,x} \big[u^\eps(t,x)\ -\   U^k(\eps, t,x, x/\eps)  \big]\big\|_{L^2(\RR^d_x)}
\ \le\
C\,
\eps^{2k+1} \langle t\rangle^{k+1}
\,.
\end{equation}
\end{theorem}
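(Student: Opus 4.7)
The strategy is to substitute $U^k(\eps,t,x,x/\eps)$ into the wave operator $\mathcal{L}_\eps := \rho(x/\eps)\partial_t^2 - \div\,a(x/\eps)\grad$, show that the resulting residual is $O(\eps^{2k+1}\langle t\rangle^k)$ in $L^2(\RR^d)$, and then conclude by the standard energy identity. Set $r^\eps(t,x) := u^\eps(t,x) - U^k(\eps,t,x,x/\eps)$. Since $f$ and each $u_n$ vanish for $t<0$, so do $r^\eps$ and its Cauchy data at $t=0$. Using $\pi u_{2k+1}=0$, the truncated ansatz reads $U^k = \sum_{n=0}^{2k+1}\eps^n u_n + \eps^{2k+2}(I-\pi)u_{2k+2}$. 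Guided by \eqref{eq:defW}, an order-by-order inspection of $\mathcal{L}_\eps U^k\big|_{y=x/\eps}$ shows that for $n=-2,\ldots,2k$ the coefficient of $\eps^n$ coincides with the classical $w_n$ (the absence of $\pi u_{2k+2}$ from the ansatz is harmless at order $\eps^{2k}$, since that profile enters only through $\mathcal{A}_{yy}$ and $\mathcal{A}_{yy}\pi=0$) and hence vanishes except at $n=0$, where it equals $f$. The only surviving residual pieces are
\begin{align*}
w^k_{2k+1}\ &=\ \rho(y)\partial_t^2 u_{2k+1}\ -\ \mathcal{A}_{xy}(I-\pi)u_{2k+2}\ -\ \mathcal{A}_{xx}u_{2k+1},\\
w^k_{2k+2}\ &=\ \rho(y)\partial_t^2(I-\pi)u_{2k+2}\ -\ \mathcal{A}_{xx}(I-\pi)u_{2k+2}.
\end{align*}

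The next step is to bound these in $L^2$. Theorem \ref{thm:secular} gives that every derivative of $u_{2k+1}$ is $O(\langle t\rangle^k)$. The crucial point is that the \emph{oscillatory} part $(I-\pi)u_{2k+2}$ is better behaved than the full profile: Theorem \ref{thm:osc} gives $(I-\pi)u_{2k+2} = \sum_{n=1}^{2k+2}\chi_n\,\pi u_{2k+2-n}$, so its derivatives are controlled by derivatives of the $\pi u_\ell$ with $\ell\le 2k+1$, and by leap frog these are $O(\langle t\rangle^k)$, as already noted at the end of the proof of Theorem \ref{thm:secular}. Combining with the two-scale $L^2$ transfer estimate of Appendix \ref{sec.estimate} yields
\[
\big\|\,\eps^{2k+1} w^k_{2k+1}(t,\cdot,\cdot/\eps)\ +\ \eps^{2k+2} w^k_{2k+2}(t,\cdot,\cdot/\eps)\,\big\|_{L^2(\RR^d)}\ \leq\ C\,\eps^{2k+1}\,\langle t\rangle^k.
\]

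Finally, $r^\eps$ solves $\mathcal{L}_\eps r^\eps = -(\text{residual})$ with vanishing Cauchy data. Uniform ellipticity and boundedness of $\rho$ and $a$ make the standard energy identity for $\mathcal{L}_\eps$ uniform in $\eps$, so
\[
\|\nabla_{t,x} r^\eps(t)\|_{L^2(\RR^d)}\ \leq\ C\int_0^t \|\mathrm{residual}(s)\|_{L^2(\RR^d)}\,ds\ \leq\ C\,\eps^{2k+1}\int_0^t \langle s\rangle^k\,ds\ \leq\ C\,\eps^{2k+1}\langle t\rangle^{k+1},
\]
which is \eqref{eq:errbd}. The main obstacle is the careful order-by-order bookkeeping in the residual computation together with the observation that the $\eps^{2k+1}$ residual involves only the \emph{oscillatory} part $(I-\pi)u_{2k+2}$ rather than the full profile; without this refinement the residual bound would inflate to $O(\eps^{2k+1}\langle t\rangle^{k+1})$ and Duhamel would cost an extra power of $\langle t\rangle$, giving the suboptimal $\langle t\rangle^{k+2}$ instead of the advertised $\langle t\rangle^{k+1}$.
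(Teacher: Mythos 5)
Your plan is the same as the paper's — compute the residual, show it starts at order $\eps^{2k+1}$, express it via the profiles $(I-\pi)u_{2k+1},(I-\pi)u_{2k+2}$, use secular growth for the time dependence, two-scale $L^2$ transfer for the fast variable, and close with an energy estimate. The residual formulas you write for $w^k_{2k+1}, w^k_{2k+2}$ coincide with the paper's (using $\pi u_{2k+1}=0$). But there is a genuine gap at the estimation step.

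You assert that the residual is $O(\eps^{2k+1}\langle t\rangle^k)$ in $L^2(\RR^d)$ and then apply the standard Duhamel estimate. This is false under the paper's hypotheses, where $a$ (and $\rho$) are only $L^\infty(\TT^d)$ and the corrector coefficients $c_{\beta,n}$ are only $H^1(\TT^d)$. The term $\mathcal{A}_{xy}(I-\pi)u_{2k+2}$ contains $\div_y\,a(y)\grad_x$ applied to $\sum c_{\beta,n}(y)\partial^\beta\pi u_{2k+2-n}(t,x)$, which produces $\div_y\big(a(y)c_{\beta,n}(y)\big)$ — a quantity that is only in $H^{-1}(\TT^d)$, not $L^2(\TT^d)$. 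Similarly $\mathcal{A}_{xx}(I-\pi)u_{2k+2}$ contributes another piece that, once $y=x/\eps$ is substituted, carries a full $x$-divergence of an $L^2$ object rather than an $L^2$ object itself. Consequently $r(\eps,t,x,x/\eps)$ is not in $L^2(\RR^d_x)$; it only admits a decomposition $r=f_\eps+\div\,g_\eps$ with $f_\eps,g_\eps\in L^2(\RR^d)$ (and $\partial_t g_\eps\in L^2$), obtained by combining the $\div_y\,a\,\grad_x$ piece with the $\eps\,\div_x\,a\,\grad_x$ piece via the chain rule — this is exactly what \eqref{eq:calcIIIeps} does. The standard energy identity you invoke is not applicable; one needs the dedicated estimate of Proposition \ref{prop:energyh-1source} for a wave equation forced by $f+\div g$, which trades spatial regularity of $g$ for time regularity ($g,\partial_t g\in L^\infty/L^1(L^2)$). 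Without this, your final inequality $\|\nabla_{t,x} r^\eps(t)\|\le C\int_0^t\|\mathrm{residual}(s)\|_{L^2}\,ds$ has an undefined right-hand side. Note also that the two-scale transfer estimate (Proposition \ref{prop:reslemnew}) needs $c\in L^2(\TT^d)$, so it does not save you: it applies to the $L^2$ piece of the residual but not to $\div_y(ac)\in H^{-1}$. The rest of your argument (vanishing of the intermediate $w_j$, the secular bounds $O(\langle t\rangle^k)$ on $(I-\pi)u_{2k+1},(I-\pi)u_{2k+2}$, the final power count) is correct and matches the paper.

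Your closing remark — that including only $(I-\pi)u_{2k+2}$ rather than the full $u_{2k+2}$ is what saves a power of $\langle t\rangle$ — is not quite the paper's point. The ansatz is designed so that $w_j=0$ for $-2\le j\le 2k$, $j\neq 0$; the surviving $w_{2k+1},w_{2k+2}$ involve only profiles with index $\le 2k+2$ whose oscillatory parts are built from $\pi u_j$, $j\le 2k$, hence are $O(\langle t\rangle^k)$. That structure, not the choice to drop $\pi u_{2k+2}$, is what limits the growth.
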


\begin{remark} 
\label{rem:elvis} 
 {\bf i.}
The energy of $u^\eps$ is bounded independently of time so the right hand
side of \eqref{eq:errbd} estimates the relative  energy error.

\noindent
{\bf ii.}  By choosing $k$
large one gets arbitrarily high order accuracy
on time intervals that grow as $1/\eps^{2-\delta}$ for any $\delta>0$.
{\rm Indeed, on the time interval $0\le t\le \eps^{-\gamma}$ with
$\gamma(k+1)<2k+1$ the relative error in energy is of order $\eps^{2k+1-\gamma(k+1)}$ 
and tends to zero as $\eps\to 0$.  }

\noindent
{\bf iii.}   The problem 
\eqref{eq:homprob} is invariant by differentiation in time.  The derivative
$\partial_t^ju^\eps$ is the solution of the same problem with source
term $\partial_t^jf$.  The profiles of the two scale asymptotic solution of that
problem are equal to the functions $\partial_t^ju_n(t,x,y)$.
Theorem \ref{thm:errorest} applied to that problem shows that with 
a constant $C$ depending on $j$ but independent of $t,\eps$,
\begin{equation}
\label{eq:errbd2}
\big\| \partial_t^j\nabla_{t,x} \big[u^\eps(t,x)\ -\   U^k(\eps, t,x, x/\eps)  \big]\big\|_{L^2(\RR^d)}
\ \le\
C\,
\eps^{2k+1} \langle t\rangle^{k+1}
\,.
\end{equation}

\end{remark}

The proof of Theorem \ref{thm:errorest} has three main ingredients.  The first
in \S \ref{sec:residual} relies on 
all the work done so far.  It  is a  precise formula for the
difference between $f$ and $\rho(x/\eps)\partial^2_t U^k-\dive\, a(x/\eps) \grad\, U^k$.
That difference has terms no more regular than $H^{-1}$.  
They are estimated in \S \ref{sec:reslem}.  The error
in energy with such singular source terms is bounded using
Proposition \ref{prop:energyh-1source}.

\subsection{Formula for the residual}
\label{sec:residual}

Suppose that  $U^k$ is  the finite power series from Theorem \ref{thm:errorest}. 
Then
\begin{equation}
\label{eq:fred}
 \bigg[
\rho(y) \partial_t^2  -  \mathcal{A}_{xx}
  - \frac{1} {\eps}
  \mathcal{A}_{xy}
  - \frac{1}{\eps^2}
  \mathcal{A}_{yy}
  \bigg]
  U^k(\eps,t,x,y)
  =
  W^k(\eps,t,x,y)
  =
  \sum_{n=-2}^{2k+2}
  \eps^n w_n(t,x,y).
\end{equation}
The profiles $w_j$ satisfy
$$
w_0=f,
\qquad
{\rm and\ for \ } j\ne 0, \ -2\leq j\le 2k,
\qquad
w_j\ =\ 0\, .
$$
Indeed, for $j\le 2k-2$ one has 
\begin{equation*}
 w_j=(\rho(y) \partial_t^2-\mathcal{A}_{xx})u_j-
 ( \mathcal{A}_{yy} u_{j+2}
 \ +\ 
 \mathcal{A}_{xy} u_{j+1} 
 )=0
\end{equation*}
by construction of the profiles $u_j$. For $j=2k-1$
use the fact that $\pi u_{2k+1}=0$ because 
$2k+1$ is odd to find that 
\begin{align*}
w_{2k-1}&=(\rho(y) \partial_t^2-\mathcal{A}_{xx}) u_{2k-1}-
 ( \mathcal{A}_{yy}(I-\pi) u_{2k+1}
 \ +\ 
 \mathcal{A}_{xy} u_{2k} )\\
&=(\rho(y) \partial_t^2-\mathcal{A}_{xx}) u_{2k-1}-
 ( \mathcal{A}_{yy} u_{2k+1}
 \ +\ 
 \mathcal{A}_{xy} u_{2k})=0.
\end{align*} 
 For $j=2k$
 use $\pi u_{2k+1}=0$ and $\mathcal{A}_{yy}\pi=0$ to find
\begin{align*}
w_{2k}&=(\rho(y) \partial_t^2-\mathcal{A}_{xx} ) u_{2k}-
 ( \mathcal{A}_{yy}(I-\pi) u_{2k+2}
 \ +\ 
 \mathcal{A}_{xy}(I-\pi) u_{2k+1})\\
&=(\rho(y) \partial_t^2-\mathcal{A}_{xx}) u_{2k}-
 ( \mathcal{A}_{yy} u_{2k+2}
 \ +\ 
 \mathcal{A}_{xy} u_{2k+1})=0.
\end{align*}

Therefore
\begin{align*}
 \bigg[
\rho(y) \partial_t^2\ &-\  \mathcal{A}_{xx}
  \ -\ \frac{1} {\eps}
  \mathcal{A}_{xy}
  \ -\ 
	\frac{1}
  {\eps^2}
  \mathcal{A}_{yy}
  \bigg]
  U^k(\eps,t,x,y)\ - f
  \cr
\ & =\
  \eps^{2k+1} w_{2k+1}(t,x,y) \ +\ \eps^{2k+2} w_{2k+2}(t,x,y)
	    \ =:\  r(\eps, t,x,y)\,.
\end{align*}
and thus
$$
\Big[\rho(x/\eps)\partial_t^2\ -\ 
  \dive \, a(x/\eps) \, \grad 
\Big]
U^k(\eps,t,x,x/\eps) \ -\ f
\ =\
r(\eps,t,x,x/\eps)\,.
$$

Equation \eqref{eq:fred} shows that
\begin{align*}
w_{2k+1}&=(\rho(y) \partial_t^2 -\mathcal{A}_{xx})(I-\pi)u_{2k+1}-
  \mathcal{A}_{xy}(I-\pi) u_{2k+2} ,
 \cr
w_{2k+2}&=(\rho(y) \partial_t^2-\mathcal{A}_{xx}) (I-\pi)u_{2k+2}.
\end{align*}
Therefore 
$$
r=
   \Big[ \rho(y) \partial_t^2-
    \mathcal{A}_{xx}
  \Big](I-\pi)
  \Big(
  \eps^{2k+1} u_{2k+1} + \eps^{2k+2} u_{2k+2}
  \Big)
  \ -\
   \eps^{2k+1}
  \mathcal{A}_{xy}(I-\pi)
u_{2k+2}\,.
$$
The definitions of the operators $\mathcal{A}$ yield
\begin{align}
\begin{split}
\label{eq:residualnoncriminal}
  r=&\ \eps^{2k+1}\left[\left(\rho(y) \partial_t^2-
    \div_x\, a(y)\grad_x\right)
  (I-\pi)
   u_{2k+1} + \eps\rho(y)\partial^2_t(I-\pi) u_{2k+2}\right]\\
	&-\eps^{2k+1}\left(\div_x\, a(y)\grad_y + \div_y\,a(y)\grad_x + \eps\div_x\,a(y)\grad_x\right) (I-\pi)u_{2k+2}\\
	:=&\ \eps^{2k+1}\left( I(\eps,t,x,y) \ +\  II(\eps,t,x,y)\right).
	\end{split}
\end{align}
In addition, for $\ell =2k+1,2k+2$,  with $\chi_n$ given by
\eqref{eq:chikcalpha},
\begin{equation}
\label{eq:I-piul}
(I-\pi)u_\ell \ =\
\sum_{n=1, \ \ell-n\  {\rm even}}^\ell
\,
\chi_n\
\pi u_{\ell-n}
= 
\sum_{n=1, \ \ell-n\  {\rm even}}^\ell
\,
\sum_{|\beta|=n}
c_{\beta,n}(y)\,\partial_{t,x}^\beta\pi u_{\ell-n}
\,.
\end{equation}

\subsection{Estimates for the residual}
\label{sec:reslem}

In view of \eqref{eq:I-piul} and Theorem \ref{thm:secular},
$I(\eps,t,x,y)$ in \eqref{eq:residualnoncriminal} 
is a sum of terms of the form $\eps^p c(y)v(t,x)$ with $p\geq 0$,  
$c\in L^2(\TT^d)$ and
\begin{equation*}
\|\partial_{t,x}^\alpha v\|_{L^2(\RR^d)}
\ \lesssim\
\langle t\rangle^k\
\end{equation*}
for $\alpha\in \N^{1+d}$, including $\alpha=0$. 

Proposition \ref{prop:reslemnew}
implies that  the $L^2$-norm of $I(\eps,t,x,x/\eps)$ 
satisfies 
\begin{equation}
\label{eq:IepsIIeps}
\|I(\eps, t,x,x/\eps)\|_{L^2(\RR^d_x)}
\ \lesssim\
 \langle t\rangle^k\,. 
\end{equation}
The  term $II(\eps,t,x,y)$ in \eqref{eq:residualnoncriminal} involves derivatives
 of $a(\cdot)$,
so is not square integrable.   Equation  \eqref{eq:I-piul}
shows that $II(\eps,t,x,y)$ is equal to
\begin{align*}
-\left(\div_x\, a(y)\grad_y + \div_y\,a(y)\grad_x + \eps\div_x\,a(y)\grad_x\right) 
\bigg[\sum_{n=1}^{2k+2}\sum_{|\beta|=n}c_{\beta,n}(y)\partial^\beta_{t,x}\pi u_{2k+2-n}(t,x)\bigg].
\end{align*}
Evaluate at $y=x/\eps$ to show that, with $\div$ acting on functions depending on $x$ as well as on functions depending on $x/\eps$,
\begin{align}
\begin{split}
\label{eq:calcIIIeps}
II(\eps,t,x,x/\eps)=-\eps&\sum_{n=1}^{2k+2}\sum_{|\beta|=n}\div\left[a(x/\eps)c_{\beta,n}(x/\eps)\grad\,\partial^\beta_{t,x}\pi u_{2k+2-n}(t,x)\right]\\
+&\sum_{n=1}^{2k+2}\sum_{|\beta|=n}\left(a(x/\eps)(\grad_y c_{\beta,n})(x/\eps)\cdot\grad\,\partial^\beta_{t,x}\pi u_{2k+2-n}(t,x)\right)\\
:=&\ \div\left(II^{(1)}(\eps,t,x,x/\eps)\right) \ +\  II^{(2)}(\eps,t,x,x/\eps).
\end{split}
\end{align}
Arguing exactly as for $I$,
using that each $c_{\beta,n}\in H^1(\TT^d)$, it follows that $II^{(1)}(\eps,t,x,x/\eps)$, $\partial_t II^{(1)}(\eps,t,x,x/\eps)$, 
and, $II^{(2)}(\eps,t,x,x/\eps)$ are in $L^2(\RR^d)$ with 
\begin{align}
\begin{split}
\label{eq:IIIeps}
\|\partial_t II^{(1)}(\eps,t,x,x/\eps)\|_{L^2(\RR^d_x)}
\ +\
\|II^{(1)}(\eps,t,x,x/\eps)\|_{L^2(\RR^d_x)}&
\ \lesssim\
\eps\langle t \rangle^k\\
\quad\|II^{(2)}(\eps,t,x,x/\eps)\|_{L^2(\RR^d_x)}&
\ \lesssim\
\langle t \rangle^k.
\end{split}
\end{align}
Combining estimate \eqref{eq:IIIeps} with \eqref{eq:IepsIIeps} 
yields the residual 
\begin{equation}
\label{eq:len}
r(\eps,t,x,x/\eps)\ =\ f(\eps, t,x) \ +\  \div\, g(\eps, t,x)\,,
\end{equation}
with $f(\eps, t,x)=\eps^{2k+1}(I+II^{(2)})(\eps,t,x,x/\eps)$ and $g(\eps,t,x)=\eps^{2k+1}II^{(1)}(\eps,t,x,x/\eps)$ and the two estimates,
\begin{equation}
\label{eq:fgestimate}
\|f(\eps,t,\cdot)\|_{L^2(\RR^d)}\lesssim\eps^{2k+1}\langle t\rangle^k,
\quad
\|\partial_t g(\eps, t,\cdot)\|_{L^2(\RR^d)}+\|g(\eps, t,\cdot)\|_{L^2(\RR^d)}\lesssim\eps^{2k+2}\langle t\rangle^k.
\end{equation}

\subsection{End of proof of Theorem \ref{thm:errorest}}

Denote by $u^\eps$ the exact solution and $U^k$ the approximation
from the statement of Theorem \ref{thm:errorest}. We have proved that
$$
\Big[\rho(x/\eps)\partial_t^2\ -\ 
  \dive \, a(x/\eps) \, \grad
\Big](u^\eps(t,x)-U^k(\eps,t,x,x/\eps))\ :=\ r(\eps, t,x,x/\eps)
$$
with $r(\eps,t,x,x/\eps)$ 
satisfying \eqref{eq:len} and 
\eqref{eq:fgestimate}.
Apply Proposition \ref{prop:energyh-1source}.
In the estimate of that Proposition, the $L^1\big([0,t]\,;L^2(\RR^d) \big)$ norms of 
$f$ and $\partial_t g$ are estimated by $t$ times the 
$L^\infty\big([0,t];L^2(\RR^d)  \big)$ norms, which are controlled by \eqref{eq:fgestimate}. 
This yields the claimed result 
\eqref{eq:errbd}.
\hfill
\qed
\vskip.2cm

\begin{remark}  The results concerning the two scale expansions extend 
with only minor changes in the proofs  to the case of coefficients $a(x,x/\eps)$
provided that for all $\beta$, $\partial_x^\beta a(x,y)\in L^\infty(\RR^d\times\TT^d)$.
In this case the correctors have coefficients $c_\alpha(x,y)$ 
and Lemma \ref{lem:wellposedchi} asserts
$\partial_x^\beta c_\alpha\in  L^\infty(\RR^d_x;H^1(\TT^d))$.  The proofs of the leap
frog structure and slow secular growth are unchanged.   The
residual estimate for the term $II$ in \eqref{eq:calcIIIeps} 
has a few additional terms treated using this regularity of $c$. 
For the criminal path, the case $a(x,x/\eps)$ is work in progress.
\end{remark}

\section{The criminal path}
\label{sec.crim}

The criminal path, briefly presented in Section \ref{sec.crimintro},
yields approximations valid for times as long as $\eps^{-N}$ for arbitrary $N$.

{\bf Main idea.}  
{\it The criminal path changes the  choice of the 
nonoscillatory parts $\pi u_n$.     
The oscillatory parts 
$(1-\pi) u_n$ are given in terms of 
the nonoscillatory parts by
\eqref{eq:osc} as in classical homogenization.}

We replace the traditional ansatz \eqref{eq:utaylor} 
for $U$ by the criminal ansatz \eqref{eq:crimeansatz} for $V$. 
Since the terms $v_n$ in \eqref{eq:crimeansatz} depend on $\eps$, 
we commit the asymptotic crime of mixing
different orders in $\eps$.  
The 
 terms of order $\eps^2$ in the
criminal path are introduced in different but related 
ways in the 
seminal articles
  \cite{BP}, 
  \cite{santosa},
  \cite{Lam}.

\subsection{Derivation of criminal equations}
According to Definition \ref{def:criminal} the criminal ansatz 
satisfies $v_0=\pi v_0$, $\pi v_n=0$ for $n\geq1$ and 
$(I-\pi) v_n=\chi_n v_0$.  The leading term $v_0=\pi\,v_0$ and
profile
$V(\eps, t,x,y)$ are constructed so that the two formal identities 
\begin{equation}
\label{eq:epsD}
\Big(
a_2^*(\eps\partial_{t,x})+ \cdots +
a_{2n}^*(\eps\partial_{t,x}) + \cdots
\Big)
v_0(\eps,t,x) = \eps^2 f\,,
\qquad
v_0 = 0 \mbox{ for } t<0\,,
\end{equation}
\begin{equation}
\label{eq.bp2}
V(\eps,t,x,y) \ =\  \Big(1+
\sum_{l= 1}^\infty \eps^l \chi_l(y,\partial_{t,x})\ \Big) v_0(\eps,t,x) \,,
\end{equation}
are satisfied up to an acceptable error.
Even if \eqref{eq:epsD} is truncated to 
be a finite sum, it  is high order in $t$.
For each $\eps$ it 
usually defines an ill posed time evolution.
In spite of this, the next sections construct functions $v_0^k$ that satisfy \eqref{eq:epsD}  with 
small enough error.

Equation \eqref{eq:epsD} can be understood in another way.
Theorems \ref{thm:osc} and \ref{thm:recur1} together with 
their remarks show that 
standard homogenization hierarchy  is equivalent 
to the pair  of identities in the sense of formal
power series,
\begin{equation*}
\Big(
\sum_{n= 1}^\infty
a_{2n}^*(\eps\partial_{t,x})
\Big)
\Big(\sum_{m=0}^\infty \eps^m \pi u_m\Big) = \eps^2 f,
\quad
U = \Big(1+
\sum_{l= 1}^\infty \eps^l \chi_l\ \Big) \Big(\sum_{n=0}^\infty \eps^n \pi u_n\Big).
\end{equation*}
Equivalently
\begin{equation}
\label{eq.bp}
\Big(
\sum_{n= 1}^\infty
a_{2n}^*(\eps\partial_{t,x})
\Big)
\pi U=\eps^2f\,,
\qquad
U = \Big(1+
\sum_{l= 1}^\infty \eps^l \chi_l\ \Big) \pi U.
\end{equation}
{\it If one does {\bf not} insist that $\pi U$ be  a formal power 
series in $\eps$}, this suggests the criminal equation \eqref{eq:epsD} for $v_0=\pi U$ 
and the criminal ansatz $V=U= \Big(1+\sum_{n=1}^\infty \eps^n \chi_n\ \Big) v_0$.

In the next sections, equation \eqref{eq:epsD} is converted to normal form, truncated at order $k$ and filtered, 
leading to the solutions $v_0^k$ from Definition \ref{def:criminal} 
with small enough error such that the approximation is very accurate.

\subsection{Elimination algorithms}
\label{sec:elimination}

The algorithms of this section eliminate
the time derivatives in 
\eqref{eq:epsD}, other than those in $a_2^*$,
while
changing the $x$-derivatives in the high order terms.

\begin{proposition}
\label{prop:elim2}
There are uniquely determined homogeneous operators $R_{2j}(\partial_{t,x})$
and $\widetilde a_{2j}(\partial_x)$
of degree $2j$, the latter involving only $\partial_x$, 
so that
\eqref{eq:R2j} holds as an identity in the sense of formal power series.
\end{proposition}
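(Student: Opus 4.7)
\textbf{Proof plan for Proposition \ref{prop:elim2}.}
The plan is to identify each differential operator with constant coefficients with its symbol, so that the identity \eqref{eq:R2j} becomes an identity of formal power series in the commuting variables $\tau,\xi=(\xi_1,\ldots,\xi_d)$ over $\RR$, where $\tau\leftrightarrow\partial_t$ and $\xi_i\leftrightarrow\partial_{x_i}$. The crucial structural input is that each homogenized symbol $a_{2j}^\ast(\tau,\xi)$ contains only even powers of $\tau$ (Remark iii following \eqref{eq:recur33} together with Theorem \ref{thm:recur2}), and that the quadratic part has the explicit form
\[
a_2^\ast(\tau,\xi) \ =\ (\pi\rho)\,\tau^2 \ -\ h(\xi),
\]
where $h(\xi)$ is a homogeneous polynomial of degree $2$ in $\xi$ and $\pi\rho>0$. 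Thus $a_2^\ast$, viewed as a polynomial in $\tau$ over the ring $\RR[\xi]$, is monic of degree two up to a nonzero constant factor and in particular is not a zero divisor in $\RR[\tau,\xi]$.

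Next, I match the homogeneous components of degree $2n$ on both sides of \eqref{eq:R2j}. At degree $2$ one gets $a_2^\ast=a_2^\ast$, with no condition. For $n\ge 2$, equating degree $2n$ components and isolating the term containing $R_{2(n-1)}$ yields
\[
R_{2(n-1)}(\tau,\xi)\,a_2^\ast(\tau,\xi)\ =\ \widetilde a_{2n}(\xi)\ -\ a_{2n}^\ast(\tau,\xi)\ -\ \sum_{j=1}^{n-2} R_{2j}(\tau,\xi)\,a_{2(n-j)}^\ast(\tau,\xi).
\]
This suggests an induction on $n$, with hypothesis: for each $j<n-1$ the operators $R_{2j}$ and (for $2\le j<n$) $\widetilde a_{2j}$ are uniquely determined, and each $R_{2j}(\tau,\xi)$ contains only even powers of $\tau$. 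Under this hypothesis the sum above is a known homogeneous polynomial $T_n(\tau,\xi)$ of degree $2n$ containing only even powers of $\tau$.

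The inductive step is then a single polynomial division. Divide $-T_n$ by $a_2^\ast$, regarded as a polynomial of degree $2$ in $\tau$ with leading coefficient $\pi\rho\neq 0$, to obtain
\[
-T_n(\tau,\xi)\ =\ Q_n(\tau,\xi)\,a_2^\ast(\tau,\xi)\ +\ r_n(\tau,\xi),\qquad \deg_\tau r_n\ <\ 2.
\]
Because both $T_n$ and $a_2^\ast$ lie in $\RR[\tau^2,\xi]\subset\RR[\tau,\xi]$, the quotient $Q_n$ also lies in $\RR[\tau^2,\xi]$, and the remainder, of $\tau$-degree less than two, must therefore be independent of $\tau$, i.e.\ $r_n=r_n(\xi)$. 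Set $\widetilde a_{2n}(\xi):=r_n(\xi)$ and $R_{2(n-1)}(\tau,\xi):=-Q_n(\tau,\xi)$; then the displayed equation above is satisfied, the even-$\tau$-power hypothesis persists at index $n-1$, and uniqueness is immediate because $a_2^\ast$ is not a zero divisor in $\RR[\tau,\xi]$, forcing the decomposition of $T_n$ as $-Q_n\,a_2^\ast+r_n(\xi)$ with $r_n$ independent of $\tau$ to be unique.

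The only delicate point, and the one I would be most careful about, is to verify that the polynomial division really preserves the even-parity-in-$\tau$ structure, since without it the remainder $r_n$ could a priori contain a $\tau$ term, which would spoil the conclusion that $\widetilde a_{2n}$ depends only on $\xi$. Once that parity bookkeeping is in place, the rest is straightforward algebra and the recursion terminates at each finite order, giving both existence and uniqueness of $R_{2j}$ and $\widetilde a_{2j}$ as claimed.
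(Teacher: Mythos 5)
Your argument is correct and is essentially the paper's proof in a slightly different guise: the paper's Lemma \ref{lem:elim} performs exactly the elimination step you phrase as Euclidean division of a polynomial in $\tau$ by $a_2^*=\urho\,\tau^2+a_2(\xi)$, solving the resulting triangular system in the coefficients $q_{2j}$ explicitly rather than invoking uniqueness of polynomial division, and then both proofs run the same induction on the homogeneous degree. Your parity-of-$\tau$ observation, which you rightly flag as the delicate point, is handled implicitly in the paper because the explicit formulas for the $q_{2j}$ never produce an odd power of $\partial_t$; your more abstract route reaches the same conclusion via the $\tau\mapsto-\tau$ symmetry of the division.
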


The heart of the proof is the following Lemma.
\begin{lemma}
\label{lem:elim}
Suppose that $m\ge 2$ and $S_{2m}(\partial_{t,x})$ 
is homogeneous
of degree $2m$ and contains only even powers of $\partial_t$. 
Then there exists a unique $r_{2m-2}(\partial_{t,x})$, 
homogeneous of degree $2m-2$,  
so that $r_{2m-2}a_2^*+S_{2m}$ is a differential operator in $\partial_x$ only. 
\end{lemma}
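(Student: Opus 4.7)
The key structural observation is that $a_2^*$, the standard homogenized wave operator, has the form
\begin{equation*}
a_2^*(\partial_{t,x})\ =\ \rho^*\,\partial_t^2\ -\ A^*(\partial_x),
\end{equation*}
where $\rho^*:=\pi\rho>0$ is a positive scalar and $A^*(\partial_x)$ is a nonzero, constant coefficient, homogeneous second order operator in $\partial_x$ alone. This is visible from formula \eqref{eq:Tom}: the first term of $a_2^*$ is $(\pi\rho)\partial_t^2$ and the remaining two terms are second order in $\partial_x$ with no time derivatives. The nonzero scalar in front of $\partial_t^2$ is the lever for eliminating time derivatives, while the fact that the remainder is purely in $\partial_x$ ensures the elimination terminates in the desired form.

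First I would use the hypothesis that $S_{2m}$ contains only even powers of $\partial_t$ to decompose $S_{2m}=\sum_{j=0}^{m}\partial_t^{2j}S_{(j)}(\partial_x)$ with $S_{(j)}$ homogeneous of degree $2m-2j$ in $\partial_x$, and seek $r_{2m-2}$ in the parallel form $r_{2m-2}=\sum_{j=0}^{m-1}\partial_t^{2j}r_{(j)}(\partial_x)$. Expanding $r_{2m-2}\,a_2^*+S_{2m}$ and collecting powers of $\partial_t^{2j}$ shows that the coefficient of $\partial_t^{2m}$ is $\rho^*r_{(m-1)}+S_{(m)}$, and for $1\le j\le m-1$ the coefficient of $\partial_t^{2j}$ is $\rho^*r_{(j-1)}-r_{(j)}A^*+S_{(j)}$. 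Setting these equal to zero successively yields the explicit downward recursion
\begin{equation*}
r_{(m-1)}=-S_{(m)}/\rho^*,\qquad r_{(j-1)}=\big(r_{(j)}A^*-S_{(j)}\big)/\rho^*\quad(j=m-1,\ldots,1),
\end{equation*}
which determines every $r_{(j)}$ uniquely. The leftover coefficient of $\partial_t^{0}$ is $S_{(0)}-r_{(0)}A^*$, a differential operator in $\partial_x$ only. Degree counting confirms $r_{(j)}$ is homogeneous of degree $2m-2-2j$ in $\partial_x$, so $r_{2m-2}$ is homogeneous of degree $2m-2$ as required.

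For uniqueness against an arbitrary (not necessarily even-in-$\partial_t$) second solution $\tilde r_{2m-2}$, I would set $\delta:=r_{2m-2}-\tilde r_{2m-2}$ and use that $\delta\,a_2^*=\rho^*\partial_t^2\delta-\delta A^*$ is $\partial_t$-free. Expanding $\delta=\sum_{k=0}^{2m-2}\partial_t^k\delta_k(\partial_x)$ and matching the coefficient of $\partial_t^j$ for $j\ge1$ gives $\rho^*\delta_{j-2}=\delta_j A^*$, with the convention $\delta_k=0$ outside $[0,2m-2]$. Starting from $j=2m$ and $j=2m-1$ forces $\delta_{2m-2}=\delta_{2m-3}=0$, and propagating downward two indices at a time gives $\delta_k=0$ for every $k$; the propagation step relies on the fact that the ring $\mathbb R[\partial_{x_1},\ldots,\partial_{x_d}]$ of constant coefficient differential operators is an integral domain, so right multiplication by the nonzero operator $A^*$ is injective.

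The only real obstacle is this last point: verifying uniqueness against candidates that might contain odd powers of $\partial_t$, which is a small algebraic injectivity statement rather than a computation. Existence reduces to a triangular recursion made available by the scalar leading $\partial_t^2$ coefficient in $a_2^*$, and the matching between even-$\partial_t$ powers in $r_{2m-2}a_2^*$ and in $S_{2m}$ is automatic thanks to the hypothesis on $S_{2m}$.
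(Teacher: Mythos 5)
Your proof is correct and follows the same route as the paper's: decompose $a_2^*=\rho^*\partial_t^2+(\text{pure }\partial_x)$, expand $r_{2m-2}$ and $S_{2m}$ in powers of $\partial_t$, and solve the resulting triangular linear system by descent in the $\partial_t$-degree, using that $\rho^*=\pi\rho\ne 0$ at each step. The one addition you make, proving uniqueness against competitors $\tilde r_{2m-2}$ that may contain odd powers of $\partial_t$, is a reasonable bit of extra care (the paper tacitly restricts $r_{2m-2}$ to even $\partial_t$-powers from the start); however, your claim that this step ``relies on'' the integral-domain property of $\mathbb{R}[\partial_{x_1},\dots,\partial_{x_d}]$ and injectivity of right multiplication by $A^*$ is a misattribution: in your own descent, at each stage the right-hand side $\delta_j A^*$ is already known to be zero (because $\delta_j=0$ was established at the previous step), so the conclusion $\delta_{j-2}=0$ follows from $\rho^*\ne 0$ alone, never from cancelling $A^*$. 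In other words, the ``only real obstacle'' you identify is not an obstacle at all, and the uniqueness argument is exactly as elementary as the existence one.
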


{\bf Proof.}   
Write
$$
r_{2m-2}(\partial_{t,x})
\ =\ q_0 \partial_t^{2m-2} + q_2(\partial_x)  \partial_t^{2m-4}
+\
\cdots\ +
q_{2m-4}(\partial_x)  \partial_t^{2}
 +
q_{2m-2}(\partial_{x})\,.
$$
The goal is to determine $q_0,\dots,q_{2m-2}$ in such a way that $r_{2m-2}a_2^*+S_{2m}$ is a differential operator in $\partial_x$ only. 
Order the terms in $S_{2m}$ 
according to the order of the time derivative
$$
S_{2m}\ =\ s_0(\partial_x) \partial_t^{2m} + s_2(\partial_x)  \partial_t^{2m-2}
+\
\cdots\ +
s_{2m-2}(\partial_x)  \partial_t^{2}
\,.
$$
Define $\urho:=\pi \rho$ and $a_2(\partial_x)$
so that $a_2^*$ from \eqref{eq:Tom} satisfies
\begin{equation}
\label{eq:JP}
a_2^*(\partial_{t,x}) = \urho\partial_t^2 + a_2(\partial_x)\,.
\end{equation}
In particular $a_2(\partial_x)$ is second order in $\partial_x$.  
Then the terms containing time derivatives in $r_{2m-2}a_2^*$
are equal to 
\begin{align}
\begin{split}
\label{eq:ra2star}
&\urho
\Big(
q_0 \partial_t^{2m} + q_2(\partial_x)  \partial_t^{2m-2}
+\
\cdots\ +
q_{2m-4}(\partial_x)  \partial_t^{4}
 +
q_{2m-2}(\partial_{x})\partial_t^2
\Big)\cr
& +\ 
\Bigl(
(q_0 a_2)(\partial_x)\partial_t^{2m-2} + (q_2 a_2)(\partial_x)\partial_t^{2m-4}+\
\cdots\ + (q_{2m-6} a_2)(\partial_x)\partial_t^{4} + (q_{2m-4} a_2)(\partial_x)\partial_t^2 
\Bigr).
\end{split}
\end{align}
Regrouping in order of decreasing powers of $\partial_t$ yields that \eqref{eq:ra2star} equals
\begin{align*}
\urho q_0\, \partial_t^{2m}\ +\ 
\Bigl(\urho q_2 + q_0 a_2\Bigr)  (\partial_x)\,\partial_t^{2m-2} &+\
\cdots\cr + 
\Bigl(\urho q_{2m-4} + q_{2m-6}a_2\Bigr)(\partial_x)\,\partial_t^4
& +\  
\Bigl(\urho q_{2m-2} + q_{2m-4}a_2\Bigr)(\partial_x)\,\partial_t^2.
\end{align*}
The unique choice eliminating the time derivatives in $r_{2m-2}a_2^*+S_{2m}$ is 
given by
$$
q_0=-\urho^{-1}s_0,
\quad
{\rm
and\ for\ } 1\leq j\leq m-1,
\quad
q_{2j}(\partial_x)=-\urho^{-1}\Bigl(s_{2j}(\partial_x) + (q_{2j-2}a_2)(\partial_x)\Bigr).
$$
This completes the proof of Lemma \ref{lem:elim}.
\hfill
\qed
\vskip.2cm

\begin{definition}
Denote by $\caO_N$ 
the set of constant coefficient
partial differential operators in $\partial_{t,x}$ that are 
sums of terms  homogeneous of degree 
at least $N$. 
\end{definition}

The operators in $\caO_N$ are those whose
symbols vanish to order $N$ at the origin.

{\bf Proof of Proposition  \ref{prop:elim2}. }
In the next expressions $\caO_N$ represents an element of 
$\caO_N$. 
The identity  \eqref{eq:R2j} holds if and only if for all $k\ge 2$
\begin{equation}
\label{eq:eliminationorderk}
\Big(
1+R_2+R_4+\cdots + R_{2k-2}
\Big)
\Big(a_2^* + a_4^* +\cdots+a_{2k}^*
\Big)
=
a_2^* + \widetilde a_4 +\cdots + \widetilde a_{2k} + \caO_{2k+2}\,.
\end{equation}

The goal is to find $R_{2j}$ such that \eqref{eq:eliminationorderk} holds. 
For $k=2$
 expanding yields
\begin{equation}
\Big(
1+R_2
\Big)
\Big(a_2^* + a_4^* 
\Big)
=a_2^* + R_2 a_2^* + a_4^* + \caO_{6}.
\end{equation}
The  term of order $4$ 
is $R_2 a_2^* + a_4^*$.  Choose $R_2$ using  Lemma
\ref{lem:elim}
as the  unique homogeneous order 2 operator so that this
fourth order term is independent of $\partial_t$.  Denote by
$\widetilde a_4$ that differential operator.  

The construction is recursive. Suppose that
the $R_2,\dots, R_{2k-2}$ and $\widetilde a_4,\cdots, \widetilde a_{2k}$
have been uniquely determined so that 
\eqref{eq:eliminationorderk} holds. We show that $R_{2k}$ and 
$\widetilde a_{2k+2}$ are uniquely  determined so that the 
case $k+1$ of 
\eqref{eq:eliminationorderk}
is satisfied.

In the case $k+1$ of \eqref{eq:eliminationorderk} the terms of order $\le 2k$ on the right are only influenced
by $R_2, \dots, R_{2k-2}$.  
Separating the lowest order term in $\caO_{2k+2}$ the right hand side of \eqref{eq:eliminationorderk} can be written as 
\begin{equation}
\label{eq:inductivestep}
a_2^* + \widetilde a_4 +\cdots + \widetilde a_{2k} + p_{2k+2} + \caO_{2k+4}, 
\end{equation}
where $p_{2k+2}(\partial_{t,x})$ is homogeneous of degree $2k+2$. 
To prove the case $k+1$ one must determine $R_{2k}$ and $\widetilde a_{2k+2}$ such that
\begin{equation}
\label{eq:finitenormal}
\Big(
1+R_2+R_4+\cdots + R_{2k-2}+R_{2k}
\Big)
\Big(a_2^* + a_4^* +\cdots+a_{2k+2}^*
\Big)
=
a_2^* + \widetilde a_4 +\cdots + \widetilde a_{2k+2} + \caO_{2k+4}\,.
\end{equation}
The term of order $2k+2$ 
is $R_{2k}a_2^* + p_{2k+2}$, where $p_{2k+2}$ is given by \eqref{eq:inductivestep}, in terms of the 
$R_{2j}, \widetilde a_{2j}$ that are known from the inductive step.
Lemma \ref{lem:elim} shows that there is a unique  $R_{2k}$ so that 
this term of order $2k+2$  is independent of  $\partial_t$.  That is the uniquely
determined $R_{2k}$ and the operator in $\partial_x$ is $\widetilde a_{2k+2}$.
The recursive construction is complete.
\hfill
\qed
\vskip.2cm

\begin{remark}   The proof
yields  a recursive 
algorithm to compute
$R_{2j}, \widetilde a_{2j}\,$ from the $a_{2j}^*$.  The computation of the 
coefficients of $a_{2j}^*$ requires the solution of $\sim d^{2j}$
cell problems.
\end{remark}

\begin{remark}
\label{rem.inverse}
Proposition 
\ref{prop:elim2} implies 
 that if
$$
a_2^*(\partial_{t,x}) 
+ a_4^*(\partial_{t,x}) + \cdots + a_{2k}^*(\partial_{t,x}) 
 =E(\partial_{t,x}) 
$$
with $E(\partial_{t,x}) \in 
 \caO_{2k+2}$, then there is a
 $\widetilde E(\partial_{t,x})\in\caO_{2k+2}$ so that 
$$
a_2^*(\partial_{t,x}) 
+ \widetilde a_4(\partial_{x}) + \cdots + \widetilde a_{2k}(\partial_{x}) 
 =\widetilde E(\partial_{t,x})\,.
$$
\end{remark}

The converse of Remark \ref{rem.inverse} is also true.  In the  ring of formal
power series $1+\sum_{j=1}^\infty R_{2j}(\partial_{t,x})$
has a unique multiplicative inverse
$$
1+ \sum_{j=1}^\infty\widetilde R_{2j}(\partial_{t,x})
\ =\ 
1+\sum_{k=1}^\infty
\Big(
-\sum_{j=1}^\infty R_{2j}(\partial_{t,x})
\Big)^k\,,
$$
satisfying
\begin{equation*}
\Big(
1+ \sum_{j=1}^\infty\widetilde R_{2j}(\partial_{t,x})
\Big)
\Big(
1+\sum_{j=1}^\infty R_{2j}(\partial_{t,x})
\Big)
=
\Big(
1+\sum_{j=1}^\infty R_{2j}(\partial_{t,x})
\Big)
\Big(
1+ \sum_{j=1}^\infty\widetilde R_{2j}(\partial_{t,x})
\Big)
 = 1.
\end{equation*}

The next Corollary is an immediate consequence.

\begin{corollary}  
\label{cor:RtildeR}
Define 
$
 \widetilde R^k(\partial_{t,x})
 \, :=\, \sum_{j=1}^{k}
 \widetilde R_{2j}(\partial_{t,x})
 \,.
 $
   Then
   \begin{equation}
\begin{aligned}
\label{eq:inversereduction}
\big(
1+\widetilde R^k(\partial_{t,x})
\big)
\Big[ 
a_2^*(\partial_{t,x}) 
&+ \widetilde a_4(\partial_{x}) + \cdots + \widetilde a_{2k+2}(\partial_{x}) 
\Big]
\cr
\ =\ 
&
\Big[
a_2^*(\partial_{t,x}) 
+ a_4^*(\partial_{t,x}) + \cdots + a_{2k+2}^*(\partial_{t,x}) 
\Big]
+
\caO_{2k+4}\,.
\end{aligned}
\end{equation}
\end{corollary}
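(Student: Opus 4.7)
My plan is to deduce the corollary directly from \eqref{eq:R2j} by multiplying through by the multiplicative inverse $1+\sum_{j\ge 1}\widetilde R_{2j}$ discussed just before the corollary, and then controlling the error introduced by truncating the three infinite series ($\mathcal R := \sum_{j\ge 1}R_{2j}$, $\widetilde{\mathcal R}:=\sum_{j\ge 1}\widetilde R_{2j}$, $\widetilde{\mathcal A}:=\sum_{j\ge 2}\widetilde a_{2j}$, $\mathcal A^*:=\sum_{j\ge 1}a_{2j}^*$) to finite sums. The key observation, already implicit in the paragraph introducing the $\widetilde R_{2j}$, is that each $R_{2j}$, $\widetilde R_{2j}$, $a_{2j}^*$, $\widetilde a_{2j}$ is a homogeneous operator of degree $2j$, so the ring of formal power series is graded and truncation errors land in $\caO_N$ in a controlled way.

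First I will rewrite \eqref{eq:R2j} as $(1+\mathcal R)\mathcal A^* = a_2^* + \widetilde{\mathcal A}$ and multiply on the left by $1+\widetilde{\mathcal R}$ to obtain the formal identity
\begin{equation*}
\mathcal A^* \;=\; \big(1+\widetilde{\mathcal R}\big)\big(a_2^* + \widetilde{\mathcal A}\big).
\end{equation*}
Then I will compare this with the desired finite identity by expanding the difference
\begin{equation*}
\mathcal A^* - \big(1+\widetilde R^k\big)\big(a_2^* + \widetilde a_4+\cdots + \widetilde a_{2k+2}\big)
\;=\; \big(\widetilde{\mathcal R}-\widetilde R^k\big)\big(a_2^*+\widetilde a_4+\cdots+\widetilde a_{2k+2}\big)
\; +\; \big(1+\widetilde{\mathcal R}\big)\big(\widetilde{\mathcal A}-\widetilde a_4-\cdots-\widetilde a_{2k+2}\big).
\end{equation*}
For the first summand, $\widetilde{\mathcal R}-\widetilde R^k\in\caO_{2k+2}$ (its lowest-order term is $\widetilde R_{2k+2}$), and the second factor begins with $a_2^*\in\caO_2$, so the product lies in $\caO_{2k+4}$. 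For the second summand, $\widetilde{\mathcal A}-\widetilde a_4-\cdots-\widetilde a_{2k+2}\in\caO_{2k+4}$ while $1+\widetilde{\mathcal R}\in\caO_0$, so that product is also in $\caO_{2k+4}$.

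Combining with $\mathcal A^* = a_2^*+a_4^*+\cdots+a_{2k+2}^* + \caO_{2k+4}$ yields \eqref{eq:inversereduction}. The only step requiring any care is the truncation bookkeeping, i.e.\ verifying that each omitted cross-term $\widetilde R_{2i}\cdot h_{2j}$ has total degree at least $2k+4$; this follows automatically from the grading, so the argument is genuinely immediate, matching the authors' claim. No additional analytic input (well-posedness, convergence, or regularity) is needed, since everything takes place in the formal-power-series ring of constant-coefficient operators in $\partial_{t,x}$.
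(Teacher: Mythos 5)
Your argument is correct and is exactly the "immediate consequence" the authors have in mind: they define the inverse $1+\sum_j\widetilde R_{2j}$ in the ring of formal power series and leave the truncation bookkeeping to the reader, which is precisely what you carry out via the graded decomposition $\mathcal A^* - (1+\widetilde R^k)(a_2^*+\widetilde a_4+\cdots+\widetilde a_{2k+2}) = (\widetilde{\mathcal R}-\widetilde R^k)(a_2^*+\cdots)+(1+\widetilde{\mathcal R})(\widetilde{\mathcal A}-\cdots)$. This matches the paper's approach; no gap.
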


\subsection{Criminal equation with time derivatives eliminated} 

Having constructed the operators $R_{2j}$ the elimination algorithm is used to transform 
equation \eqref{eq:epsD} to the normal form justifying \eqref{eq:disp5}. Then equation \eqref{eq:disp5} 
is truncated at order $k$ yielding equation 
\eqref{eq:disp} repeated here,
\begin{equation*}
\Big[
a^*_2(\partial_t, \partial_x)
 \ +\
 \sum_{j=2}^{k+1}
 \eps^{2j-2}\
 \widetilde a_{2j}(\partial_x)
 \Big]
 \uv_0^k(\eps,t,x) =
 \Big[
 1+
 R^k\big(\eps\partial_{t,x}\big)\Big]f,
 \quad
 \uv_0^k=0 \ \ {\rm for}\ \ t<0\,.
\end{equation*}

The operator in brackets on the left frequently defines
 an ill posed time evolution. 
 This instability does not doom the construction
of good approximations.  
Committing an error, which high order in $\eps$, we filter the source term 
$[1+R^k\big(\eps\partial_{t,x}\big)]f$. 

Choose cutoff functions $\psi_j\in C^\infty_0(\RR^d)$ for $j=1,2$ 
with $\psi_1= 1$ on a neighborhood of 
the origin and $\psi_2=1$ on a neighborhood of
${\rm supp}\, \psi_1$. 
Choose $0<\alpha<1$.  We compute a profile
$v_0^k$ that 
 satisfies
 with
 $D:=(1/i)\partial_x$,
\begin{equation}
\label{eq:six}
\begin{aligned}
\big[
a_2^*(\partial_t,\partial_x) + \eps^2 \widetilde a_4(\partial_{x}) 
 +
\cdots
 + 
\eps^{2k}\widetilde a_{2k+2}(\partial_x)
\big] 
v_0^k  =\psi_1(\eps^\alpha D)\big(1+R^{k}(\eps\partial_{t,x}) \big)f.
\end{aligned}
\end{equation}
The ill posed evolutions remain. However, for the filtered sources on the right
in 
\eqref{eq:six},
there exist nice solutions.
 Fourier transformation in $x$ yields  ordinary differential
 equations in time 
  parametrized by $\xi$
 for any tempered solution
 of \eqref{eq:six}.
 It shows
 that  a tempered 
solution must have  transform with  support in $\eps^{-\alpha}\{ {\rm supp}\, \psi_1\}$.
Such a solution satisfies
$\psi_2(\eps^\alpha D)v_0^k =v_0^k$.  
Therefore it also satisfies
 \begin{equation}
\label{eq:seven}
\begin{aligned}
\Big[
a_2^*(\partial_t,\partial_x) + 
\big[\eps^2 \widetilde a_4(\partial_{x}) 
 +
\cdots
 + 
\eps^{2k}\widetilde a_{2k+2}(\partial_x)
\big]
\psi_2(\eps^\alpha D)
\Big] 
v_0^k
= \psi_1(\eps^\alpha D)\big(1+R^{k}(\eps\partial_{t,x}) \big) f.
\end{aligned}
\end{equation}

\subsection{Stability Theorem}
\label{sec:stability}

The operator applied to $v_0^k$ in \eqref{eq:seven} is,
$$
a^*_2(\partial_t,\partial_x) \ +\ M(\eps, k, \partial_{x})\,,
\qquad
M(\eps,k, \partial_{x})\ :=\ 
\sum_{j=2}^{k+1}
\ 
\eps^{2j-2}\
\widetilde a_{2j}(\partial_{x})\ \psi_2(\eps^\alpha  D)\,,
\quad
0<\alpha<1\,.
$$
The operator $M$ also depends on $\alpha$ and  $\psi_2$.

\begin{theorem}
\label{thm:stable}  
There is an $\eps_0>0$ so that
for each $\eps\le \eps_0, 0<\alpha<1, k\in\N$, 
and $g_0,g_1\, \in \, H^1(\RR^d)\times L^2(\RR^d)$ there is a unique solution
$v$ with $\partial_t^jv\in C(\RR\,;\, H^{1-j}(\RR^d))$ for $j\ge 0$ to
$$
\big[
a_2^*(\partial_{t,x}) \ +\ M(\eps,k,\partial_{x})
\big]v\ =\ 0\,,
\qquad
v(0,\cdot) \ =\ g_0\,,
\quad
\partial_t v(0,\cdot)\ =\ g_1\,.
$$
This solution satisfies with a constant $C$ independent of $\eps$
and $v$,
$$
\sup_{t\in\RR}
\Big(
\|\nabla_x v(t)\|_{L^2(\RR^d)}
\ +\ 
\|\partial_t v(t)\|_{L^2(\RR^d)}
\Big)
\ \le\ 
C\Big(
\|\nabla_x v(0)\|_{L^2(\RR^d)}
\ +\ 
\|\partial_t v(0)\|_{L^2(\RR^d)}
\Big)\,.
$$
For any $\alpha_0<1$, the bound is uniform for $0<\alpha\le \alpha_0$.
\end{theorem}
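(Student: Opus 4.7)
The plan is to pass to the Fourier transform in $x$ and reduce to a family of second order ODEs in $t$ with parameter $\xi$, then to verify that their "spatial symbol" is real and elliptic uniformly in $\eps$ small, and finally to read off the stability estimate from the standard energy identity for a self-adjoint positive second order operator.

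First, recall from \eqref{eq:JP} that $a_2^*(\partial_{t,x}) = \underline{\rho}\partial_t^2 + a_2(\partial_x)$, where $a_2(\partial_x)$ is the classical homogenized elliptic operator associated with \eqref{eq:homprob}. By the standard theory \cite{BP,BLP,JKO,SP}, its Fourier symbol $P(\xi) := -a_2(i\xi)$ (with signs chosen so that the wave equation reads $\underline{\rho}\hat v_{tt} + P(\xi)\hat v = 0$) is real and satisfies $P(\xi) \ge c_0\,|\xi|^2$ for some $c_0 > 0$. Each corrector $\widetilde a_{2j}(\partial_x)$ is homogeneous of degree $2j$ with real coefficients, so its symbol $\widetilde p_{2j}(\xi) := \widetilde a_{2j}(i\xi)$ is a real homogeneous polynomial of degree $2j$ (since $i^{2j} \in \RR$), hence $|\widetilde p_{2j}(\xi)| \le C_j |\xi|^{2j}$. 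After Fourier transformation in $x$ the equation $[a_2^*(\partial_{t,x}) + M(\eps,k,\partial_x)]v = 0$ becomes, for every $\xi$,
\begin{equation*}
\underline{\rho}\,\hat v_{tt}(t,\xi) + Q(\eps,k,\xi)\,\hat v(t,\xi) = 0,
\qquad
Q(\eps,k,\xi) := P(\xi) + \sum_{j=2}^{k+1}\eps^{2j-2}\widetilde p_{2j}(\xi)\,\psi_2(\eps^\alpha\xi).
\end{equation*}

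The key step is a pointwise lower bound on $Q$. On the support of $\psi_2(\eps^\alpha\xi)$ one has $|\xi| \le C_{\psi_2}\,\eps^{-\alpha}$, hence
\begin{equation*}
\eps^{2j-2}\,|\widetilde p_{2j}(\xi)|\,|\psi_2(\eps^\alpha\xi)|
\le C_j\,\eps^{2j-2}|\xi|^{2j}\mathbf{1}_{|\xi|\le C_{\psi_2}\eps^{-\alpha}}
\le C_j\,\eps^{(1-\alpha)(2j-2)}|\xi|^2.
\end{equation*}
For $j \ge 2$ and $0<\alpha\le \alpha_0<1$, the exponent $(1-\alpha)(2j-2) \ge (1-\alpha_0)(2j-2)>0$, so the finite sum of correctors is bounded by $C_k\,\eps^{2(1-\alpha_0)}|\xi|^2$. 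Choosing $\eps_0 = \eps_0(k,\alpha_0,\psi_2)$ so that $C_k\,\eps_0^{2(1-\alpha_0)} \le c_0/2$ gives, for all $\eps\le\eps_0$ and $\xi\in\RR^d$,
\begin{equation*}
\frac{c_0}{2}|\xi|^2 \ \le\ Q(\eps,k,\xi)\ \le\ \frac{3c_0}{2}|\xi|^2 + C\,|\xi|^2 \ \lesssim\ |\xi|^2.
\end{equation*}
In particular, $Q$ is real, so the ODE is a standard linear harmonic oscillator in $t$ with parameter $\xi$; existence and uniqueness of $\hat v(\cdot,\xi)\in C^2(\RR)$ from prescribed $\hat v(0,\xi),\hat v_t(0,\xi)$ is immediate, and conservation of the ODE energy
\begin{equation*}
E(t,\xi) := \underline{\rho}\,|\hat v_t(t,\xi)|^2 + Q(\eps,k,\xi)\,|\hat v(t,\xi)|^2
\end{equation*}
is obtained by multiplying by $\overline{\hat v_t}$ and taking real parts.

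Integrating $E(t,\xi) = E(0,\xi)$ in $\xi$ and invoking Plancherel plus the two-sided bound $\frac{c_0}{2}|\xi|^2 \le Q \lesssim |\xi|^2$ yields
\begin{equation*}
\|\partial_t v(t)\|_{L^2(\RR^d)}^2 + \|\nabla_x v(t)\|_{L^2(\RR^d)}^2
\le C\bigl(\|\partial_t v(0)\|_{L^2(\RR^d)}^2 + \|\nabla_x v(0)\|_{L^2(\RR^d)}^2\bigr),
\end{equation*}
uniformly for $0<\eps\le\eps_0$ and $0<\alpha\le\alpha_0<1$. Uniqueness follows by applying this bound to the difference of two solutions, and the claimed regularity $\partial_t^j v\in C(\RR;H^{1-j})$ is obtained by differentiating the ODE in $t$ and repeating the same argument. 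The main (and essentially only) obstacle is the positivity step: one must extract the real homogeneous elliptic symbol $P(\xi)$ of $a_2^*$ and absorb all higher order correctors via the filter, which works precisely because $\alpha<1$ makes each term $\eps^{(1-\alpha)(2j-2)}$ a small relative perturbation of $c_0|\xi|^2$.
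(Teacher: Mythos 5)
Your proof is essentially the same as the paper's: Fourier transform in $x$ to reduce to a $\xi$-parametrized family of ODEs, establish the two-sided bound $c|\xi|^2\le Q(\eps,k,\xi)\lesssim|\xi|^2$ by observing that the filter $\psi_2(\eps^\alpha\xi)$ restricts to $|\xi|\lesssim\eps^{-\alpha}$ so each corrector is bounded by $\eps^{(1-\alpha)(2j-2)}|\xi|^2$, and then use conservation of the ODE energy together with Plancherel. The only cosmetic difference is that the paper first invokes boundedness of $M$ on Sobolev scales to get existence and then proves the uniform bound, while you read both off directly from the ODE reduction; the substance is identical.
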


\begin{remark}
Duhamel's principle implies that there is a constant $C$ so that  the 
unique
tempered solution of 
$$
\Big[
a_2^*(\partial_{t,x})  \ +\ M(\eps,k,\partial_{x})
\Big]v\ =\ f\,,
\qquad
v(0,\cdot) \ =\ 
\partial_t v(0,\cdot)\ =\ 0\,,
$$
satisfies for all $t,f$,
$
\|\nabla_{t,x} v(t)\|_{  L^2(\RR^d)  }
\ \le\ 
C\,
\|f\|_{L^1([0,t];L^2(\RR^d) ) }
$.
\end{remark}

\noindent
{\bf Proof of Theorem \ref{thm:stable}.}   
$M$ is bounded from $H^s\to H^{s+\sigma}$ for all $s, \sigma$ with bound 
independent of $s$.  The bound tends to infinity  as $\eps\to 0$.  
The boundedness implies the existence statement of the Theorem.
That the solutions are bounded independently of $\eps$ is more subtle. This is shown below.

With the notation from \eqref{eq:JP},
the equation $(a_2^*+M)v=0$
has the form $\urho v_{tt}=-\mu(D)v$ with 
\begin{equation}
\label{eq:defrho}
-\mu_\eps(\xi) \ :=\ 
a_2(i\xi)
\ +\ 
\psi_2(\eps^\alpha\xi)
\sum_{2\le j\le k+1}
\eps^{2j-2}\,
\widetilde a_{2j}(i\xi)
\,.
\end{equation}
Each summand on the right is real valued.
Theorem \ref{thm:stable}  follows from the following estimate.  For each $\alpha_0\in ]0,1[$
there is an $\eps_0>0$  and 
constants $0<c<C$ so that for all $0<\eps\le \eps_0$, $0<\alpha\le \alpha_0$ and all $\xi\in \RR^d$,
\begin{equation}
\label{eq:rholower}
c\,|\xi|^2
 \ \le\ \mu_\eps(\xi)\ \le\ C\,|\xi|^2\,.
\end{equation}

To prove
\eqref{eq:rholower} it suffices to show that the modulus of the second summand 
on the right hand side of  \eqref{eq:defrho}
is much smaller than the modulus of the first.  In the support of the second summand
$|\xi|\lesssim \eps^{-\alpha}$.  For such $\xi$ it holds that
$$
\big|
\eps^{2j-2} \widetilde a_{2j}(i\xi)
\big| 
\ \lesssim\
 \eps^{2j-2} |\xi|^{2j} 
 \ =\ (\eps|\xi|)^{2j-2}|\xi|^2
\ \lesssim\ \eps^{(1-\alpha)(2j-2)}|\xi|^2.
 $$
 The first factor on the right tends to zero
 as $\eps\to 0$, since $0<\alpha<1$ and $j\geq 2$. This proves the desired inequality.

The spatial Fourier transform of the solution satisfies
$$
\urho\ 
\frac{\partial^2  \widehat v }{\partial^2 t}
\ +\ 
\mu_\eps(\xi)\,\widehat v\ =\ 0\,.
$$
Multiplying by  the complex conjugate of $\partial_t \widehat v$ and taking the
real part proves the conservation laws
$$
\forall \xi\in \RR^d,
\qquad
\frac{\partial}{\partial t}
\bigg(
\frac{\urho\, | \partial_t \widehat v(t,\xi) |^2}{2}
\ +\
\frac{{\mu_\eps(\xi)} \ |\widehat v(t,\xi)|^2}{2}
\bigg)
\ =\ 0\,.
$$
The estimate \eqref{eq:rholower} implies that the conserved quantity
$$
\int_{\RR^d} \left(
\urho\,
\big|
\partial_t \widehat v (t,\xi)
\big|^2
\ +\
\mu_\eps(\xi) \ 
\big|
\widehat v(t,\xi)
\big|^2 \right)
\ d\xi
$$
is uniformly  equivalent to 
$
\| \partial_t v(t)\|_{L^2(\RR^d)}^2
+
\|\nabla_x v(t)\|_{L^2(\RR^d)}^2 $.   This completes the proof.
\hfill
\qed
\vskip.2cm

\begin{corollary}
\label{cor:hier}  Let $0<\alpha_0<1,k\in \N$ and $f\in H^\infty(\RR\times\RR^d)$ supported in $[0,1]\times\RR^d$. 
Then for and all $0<\alpha\le \alpha_0$ and $0< \eps <\eps_0$
there is a unique solution $\zeta\in C^\infty(\RR\,;\, H^\infty(\RR^d))$ of 
\eqref{eq:six}. It  satisfies ${\rm supp}\,  \widehat \zeta\subset
\eps^{-\alpha}\,{\rm supp}\, \psi_1$ and 
\begin{equation}
\sup_{t\in [0,\infty[}
\big\|
\partial_{t,x}^\beta
\zeta
\big\|_{L^2(\RR^d)}
\ \le\
C(k,f,\beta)
\ <\ \infty\,,
\qquad
\beta\ne 0\,,
\end{equation}
\begin{equation}
\big\|
\zeta(t)
\big\|_{L^2(\RR^d)}
\ \le\
C(k,f) \,
\langle t\rangle
\,.
\end{equation}
\end{corollary}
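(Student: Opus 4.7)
The plan is to apply Theorem \ref{thm:stable} together with the Duhamel estimate from the remark following it, after checking that the filtered source term $g(\eps,t,x):=\psi_1(\eps^\alpha D)\bigl(1+R^k(\eps\partial_{t,x})\bigr)f$ is nice enough. Since $f\in H^\infty$ is supported in $[0,1]\times\RR^d$, the operator $1+R^k(\eps\partial_{t,x})$ (a polynomial in $\partial_{t,x}$) preserves both the smoothness and the time support, and the spatial Fourier multiplier $\psi_1(\eps^\alpha D)$ preserves the time support while restricting the $x$-Fourier support to $\eps^{-\alpha}\,{\rm supp}\,\psi_1$. Hence $g\in C^\infty(\RR;H^\infty(\RR^d))$, $\mathrm{supp}_t\, g\subset[0,1]$, and for every multi-index $\beta$
\begin{equation*}
\|\partial_{t,x}^\beta g(\eps,\cdot,\cdot)\|_{L^1(\RR;L^2(\RR^d))}\ \le\ C(k,f,\beta)\,,
\end{equation*}
uniformly in $\eps\in(0,\eps_0]$ and $\alpha\in(0,\alpha_0]$.

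Next, I would justify that \eqref{eq:six} and \eqref{eq:seven} have the same tempered solutions vanishing for $t<0$. Taking the spatial Fourier transform of \eqref{eq:six} for fixed $\xi$ outside $\eps^{-\alpha}\,{\rm supp}\,\psi_1$ yields a homogeneous equation; because the normal form $\widetilde a_{2j}$ involves only $\partial_x$, this equation is a second order constant coefficient ODE in $t$, namely $\urho\,\partial_t^2\hat\zeta(\xi,\cdot)=-\mu(\xi)\hat\zeta(\xi,\cdot)$ for some real constant $\mu(\xi)$, whose only solution vanishing for $t<0$ is identically zero. Therefore any such tempered $\zeta$ has $\mathrm{supp}\,\hat\zeta\subset\eps^{-\alpha}\,{\rm supp}\,\psi_1$, and since $\psi_2\equiv 1$ on a neighborhood of ${\rm supp}\,\psi_1$, inserting $\psi_2(\eps^\alpha D)$ in front of the higher order terms does not change the equation. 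So \eqref{eq:six} with zero Cauchy data is exactly \eqref{eq:seven}, which is covered by Theorem \ref{thm:stable}.

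Applying the Duhamel estimate from the remark after Theorem \ref{thm:stable} gives a unique $\zeta$ with $\zeta=0$ for $t<0$ and
\begin{equation*}
\|\nabla_{t,x}\zeta(t)\|_{L^2(\RR^d)}\ \le\ C\,\|g(\eps,\cdot,\cdot)\|_{L^1([0,t];L^2(\RR^d))}\ \le\ C(k,f)\,,
\end{equation*}
uniformly in $t\ge 0$ and $\eps\in(0,\eps_0]$. Because the operator has constant coefficients, $\partial_{t,x}^\beta\zeta$ solves \eqref{eq:seven} with source $\partial_{t,x}^\beta g$ and zero data, so the same estimate yields $\|\nabla_{t,x}\partial_{t,x}^\beta\zeta(t)\|_{L^2}\le C(k,f,\beta)$ for every $\beta$. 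Writing any nonzero $\beta$ as $e_j+\beta'$ gives $\partial_{t,x}^\beta\zeta=\partial_j(\partial_{t,x}^{\beta'}\zeta)$, which is one component of $\nabla_{t,x}(\partial_{t,x}^{\beta'}\zeta)$, and hence the desired $L^\infty_tL^2_x$ bound. Smoothness $\zeta\in C^\infty(\RR;H^\infty(\RR^d))$ is then a standard consequence. Finally, for $\beta=0$ I use $\zeta(t)=\int_0^t\partial_s\zeta(s)\,ds$ to get $\|\zeta(t)\|_{L^2}\le t\|\partial_t\zeta\|_{L^\infty_tL^2_x}\le C(k,f)\langle t\rangle$.

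The main technical point, and essentially the only nonroutine step, is the Fourier support identity: reducing \eqref{eq:six} to \eqref{eq:seven} requires knowing that no nontrivial tempered solution can be created at spatial frequencies outside ${\rm supp}\,\psi_1(\eps^\alpha\cdot)$. This works precisely because the normal form reduction of Section \ref{sec:elimination} kept the time derivatives second order; without this reduction the order of the $t$-ODE would grow with $k$ and nonzero solutions vanishing on $\{t<0\}$ could appear, destroying uniqueness. Everything else reduces to constant coefficient energy estimates and Duhamel.
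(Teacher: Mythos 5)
Your proof is correct and takes essentially the same route as the paper: establish the Fourier-support identity (the normal form keeps only second-order time derivatives, so for $\xi$ off $\eps^{-\alpha}\,{\rm supp}\,\psi_1$ the transformed equation is a homogeneous second-order ODE with zero Cauchy data), use it to pass between \eqref{eq:six} and \eqref{eq:seven}, then invoke Theorem \ref{thm:stable} and the Duhamel remark, commuting with constant-coefficient $\partial_{t,x}^\beta$ for the higher derivative bounds and integrating in time for $\beta=0$. You supply details the paper leaves implicit (in particular, the precise mechanism behind ``taking the Fourier transform shows...''); just be slightly more careful than ``\eqref{eq:six} with zero Cauchy data is exactly \eqref{eq:seven}'' --- the two equations differ as operators and agree only after one first shows the solution's Fourier support lies where $\psi_2(\eps^\alpha\cdot)\equiv 1$, an argument that must be run separately for solutions of \eqref{eq:seven} as well (outside ${\rm supp}\,\psi_1(\eps^\alpha\cdot)$ the right side of \eqref{eq:seven} vanishes, giving the same homogeneous second-order ODE).
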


{\bf Proof.}  {\sl Uniqueness.}  
Taking the Fourier transform shows that any solution $\zeta\in C^\infty(\RR\,;\,H^\infty(\RR^d))$
to \eqref{eq:six} satisfies 
${\rm supp}\, \widehat \zeta\subset
\eps^{-\alpha}\,{\rm supp}\, \psi_1$.  Therefore $\zeta$ also
satisfies \eqref{eq:seven}.  The solutions of those equations
are uniquely determined thanks to Theorem \ref{thm:stable}.

{\sl Existence.}  
Define $\zeta$ as the solution to 
\eqref{eq:seven}. 
The same Fourier transform argument as for uniqueness shows
that this function satisfies 
${\rm supp}\, \widehat \zeta\subset
\eps^{-\alpha}\,{\rm supp}\, \psi_1$.   In particular
$\psi_2(\eps^\alpha D) \zeta=\zeta$.
This implies that $\zeta$ satisfies
\eqref{eq:six}. 
 Theorem \ref{thm:stable}
implies that it has the additional properties claimed in 
Corollary \ref{cor:hier}
establishing existence.
\hfill
\qed
\vskip.2cm

\subsection{Criminal approximation error}
\label{ss.crim-error}

This section performs the computations that are the main
ingredients in the proof of Theorem \ref{thm:criminal}.  

\begin{remark}
The profile  $v_0^k=\pi v_0^k$ from 
Definition \ref{def:criminal}
satisfies $\psi_2(\eps^\alpha D) v_0^k=v_0^k$
so is the unique tempered solution supported in $t\ge 0$ of
\begin{equation}
\label{eq:nopsi}
\Big[
a_2^*(\partial_{t,x} )
\ +\
\Big(\sum_{j=2}^{k+1}
\eps^{2j-2}
\widetilde a_{2j}(\partial_{x})
\Big)
\psi_2(\eps^\alpha D) 
\Big]
\pi v_0^k
\ =\
\big[
1+ R^{k}(\eps \partial_{t,x})
\big]
\psi_1(
\eps^\alpha D)
f\,.
\end{equation}
\end{remark}

 \S \ref{sec:crimeres} 
 computes a precise
 formula
for the residual.   
  The entire
paper prepares that computation.

\subsubsection{Formula for  the residual}
\label{sec:crimeres}
Recall the criminal approximation $V^k$ from \eqref{eq:defVk}, 
\begin{equation*}
V^k(\eps,t,x,y)\,
=\,\sum_{n=0}^{2k+2}\eps^n v_n^k(\eps,t,x,y)
=\left(I + \sum_{n=1}^{2k+2}\eps^n\chi_n(y,\partial_t,\partial_x)\right)v_0^k(\eps,t,x)\, .
\end{equation*}
Define
\begin{equation}
Z(\eps, t,x,y)
\ :=\
 \bigg[
\rho(y) \partial_t^2\ -\ 
  \frac{1}
  {\eps^2}
  \mathcal{A}_{yy}
  \ -\
   \frac{1}
  {\eps}
  \mathcal{A}_{xy}
  \ -\
  \mathcal{A}_{xx}
  \bigg]
V^k(\eps,t,x,y)\,.
\end{equation}
This is regrouped in powers of $\eps$ as if the $v_n^k$ did not depend on
$\eps$.\footnote{That is, the computations are
made
in the ring of Laurent expansions in  $\eps$ whose coefficients are
functions
of $\eps,t,x,y$.
In $\eps^n v_n^k$,
the function $v_n^k$ is a coefficient of
$\eps^n$. 
If for instance $v_n^k=\eps^2$
 the power from $v_n^k$ must not
be combined with the $\eps^n$, the expression $\eps^nv_n^k$  
is still a term in $\eps^n$.}
This yields
\begin{equation}
Z \ =\ \sum_{j=-2}^{2k+2} \eps^j \ Z_j
\ =\
\sum_{j=-2}^{2k} \eps^j \ Z_j
\ +\
\sum_{j=2k+1}^{2k+2} \eps^j \ Z_j
\ =:\
\sum_{j=-2}^{2k} \eps^j \ Z_j
\ +\
\caE_1\,.
\end{equation}
The term $\caE_1$ is the first  error term.  It is estimated in Lemma \ref{lem:etimatecaE1}.
Theorem \ref{thm:osc} shows that the definition of the nonoscillatory
parts of the $v^k_n$ is equivalent to
\begin{equation}
\big(
I-\pi\big)
Z_n=0\,,
\qquad
-2\le n\le 2k.
\end{equation}

Since $\pi v^k_n=0$ for $1\le n\le 2k+2$ and $v_0^k=\pi v_0^k$, Theorem \ref{thm:recur1} yields
\begin{equation*}
\pi \sum_{j=-2}^{2k} \eps^j  Z_j
\ =\ \sum_{j=0}^{2k} \eps^j a_{j+2}^*(\partial_{t,x})v^k_0
=\eps^{-2}\sum_{j=0}^{2k} \eps^j a_{j+2}^*(\eps\partial_{t,x})v^k_0\,.
\end{equation*}
Equation \eqref{eq:inversereduction} of Corollary \ref{cor:RtildeR} implies
\begin{align*}
\pi \sum_{j=-2}^{2k} \eps^j  Z_j
\ &=\
\eps^{-2}
\big(1+\widetilde R^{k}(\eps\partial_{t,x})\big)\Big[
a_2^*(\eps\partial_{t,x} )
\ +\
\sum_{n=2}^{k+1}
\widetilde a_{2n}(\eps\partial_{x})
\Big] v^k_0
+\eps^{-2}\caO_{2k+4}(\eps\partial_{t,x}) v^k_0,
\cr
\ &=\ 
\big(1+\widetilde R^{k}(\eps\partial_{t,x})\big)\Big[
a_2^*(\partial_{t,x} )
\ +\
\sum_{n=2}^{k+1}\eps^{2n-2}
\widetilde a_{2n}(\partial_{x})
\Big] v^k_0
+\eps^{-2}\caO_{2k+4}(\eps\partial_{t,x}) v^k_0.
\end{align*}
Since $v_0^k$ satisfies equation \eqref{eq:six}, 
\begin{equation*}
 \begin{aligned}
 \pi \sum_{j=-2}^{2k} \eps^j  Z_j
\ &=\ 
\big(1+\widetilde R^{k}(\eps\partial_{t,x})\big)\psi_1(\eps^\alpha D)\big(1+
R^{k}(\eps\partial_{t,x})\big) f
+\eps^{-2}
\caO_{2k+4}(\eps\partial_{t,x}) v^k_0
\cr
\ &=:\
\big(1+\widetilde R^{k}(\eps\partial_{t,x})\big)\big(1+
R^{k}(\eps\partial_{t,x})\big)\psi_1(\eps^\alpha D) f +\caE_2
\,.
 \end{aligned}
 \end{equation*}
 Use $(1+\widetilde R^{k}(\partial_{t,x}))(1+ R^{k}(\partial_{t,x})) 
 = 1 + \caO_{2k+2}$ to continue the computation, 
  \begin{align*}
 \big(1+\widetilde R^{k}(\eps\partial_{t,x})\big)
   \big(1+
R^{k}(\eps\partial_{t,x})\big)
\psi_1(\eps^\alpha D) f
\   &=\
  \big(1+ \caO_{2k+2}(\eps\partial_{t,x}))
  \psi_1(\eps^\alpha D)f
  \cr
  &=:\
   \psi_1(\eps^\alpha D)f  +\caE_3
   \cr
   &=\
   f + (\psi_1(\eps^\alpha D)-1)f + \caE_3
   \cr
    &=:
   f + \caE_4+ \caE_3.
  \end{align*}
  Therefore,
 \begin{equation}
 \label{eq:resid}
 Z(\eps,t,x,y)-f(t,x) \ =\
 \caE_1(\eps, t,x,y)
 \ +\
 \sum_{j=2}^4 \caE_j(\eps, t,x)\,.
 \end{equation}
with
\begin{align}
\begin{split}
\label{eq:listoferrorterms}
\caE_1 &= \eps^{2k+1} Z_{2k+1} + \eps^{2k+2} Z_{2k+2},
\qquad
\caE_2=
\eps^{-2}
\caO_{2k+4}(\eps\partial_{t,x}) v^k_0,
\cr
\caE_3
&=
\caO_{2k+2}(\eps\partial_{t,x})
\,  \psi_1(\eps^\alpha D)f,
  \qquad
\hskip.3cm  \caE_4  =
(\psi_1(\eps^\alpha D)-1)f .
\end{split}
\end{align}

\subsection{Residual estimates and proof of Theorem \ref{thm:criminal}}

The error  $u^\eps(t,x)-V^k(\eps,t,x,x/\eps)$ satisfies 
\begin{equation}
\label{eq:florian1}
\begin{aligned}
\Big[\rho(x/\eps)\partial_t^2 -\dive\, a(x/\eps)\, \grad\Big] \Big(
u^\eps(t,x)&-V^k(\eps,t,x,x/\eps)\Big)
 \cr
 &=\ \caE_1(\eps,t,x,x/\eps) \ +\  \sum_{j=2}^4 \caE_j(\eps,t,x).
\end{aligned}
\end{equation}

\begin{lemma}
\label{lem:etimatecaE1}
The error term $\caE_1(\eps,t,x,x/\eps)$ from \eqref{eq:listoferrorterms} is of the form 
\begin{equation*}
\caE_1(\eps, t,x,x/\eps)
\ =\
f(\eps,t,x) 
\ +\
 \div\, g(\eps, t,x)
\end{equation*}
with $f,g$ satisfying
uniformly in $t\ge0$,
\begin{align}
\label{eq:fgestimate2}
\|f(\eps, t,\cdot)\|_{L^2(\RR^d)}
\ +\ 
\|\partial_t g(\eps, t,\cdot)\|_{L^2(\RR^d)}
\ +\
\|g(\eps, t,\cdot)\|_{L^2(\RR^d)}
\ \lesssim\
\eps^{2k+1}.
\end{align}
\end{lemma}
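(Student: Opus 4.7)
The plan is to compute $\caE_1$ explicitly, mirroring the residual analysis of Section \ref{sec:reslem}. Because $v_n^k = \chi_n\,v_0^k$ for $1\le n\le 2k+2$, $v_0^k = \pi v_0^k$, and $v_n^k = 0$ for $n>2k+2$, the definition of $Z_j$ as the coefficient of $\eps^j$ in the Laurent series for $[\rho\partial_t^2 -\eps^{-2}\caA_{yy}-\eps^{-1}\caA_{xy}-\caA_{xx}]V^k$ gives, after the cancellations from the two scale hierarchy,
$$Z_{2k+1} = \rho(y)\partial_t^2\chi_{2k+1}v_0^k - \caA_{xy}\chi_{2k+2}v_0^k - \caA_{xx}\chi_{2k+1}v_0^k,$$
$$Z_{2k+2} = \rho(y)\partial_t^2\chi_{2k+2}v_0^k - \caA_{xx}\chi_{2k+2}v_0^k.$$
Every $\chi_n$ with $n\ge 1$ is a homogeneous operator of order $n\ge 1$ with coefficients in $(I-\pi)H^1(\TT^d)$, so only derivatives $\partial_{t,x}^\beta v_0^k$ with $|\beta|\ge 1$ appear in these expressions.

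I would then split each $Z_{2k+j}$ into a part whose coefficients (evaluated at $y=x/\eps$) are in $L^2(\RR^d)$, and a part containing the problematic operator $\div_y\,a(y)\grad_x$ hidden inside $\caA_{xy}$. Exactly as in the derivation of \eqref{eq:calcIIIeps} in the noncriminal case, applied to $\chi_{2k+2}v_0^k = \sum_{|\beta|=2k+2}c_{\beta,2k+2}(y)\partial_{t,x}^\beta v_0^k$, every such singular term is rewritten using the chain rule $(\partial_{y_j}F)(x/\eps)=\eps\,\partial_{x_j}[F(x/\eps)]$ as the sum of an $\eps\cdot\div_x(\dots)$ term and a bounded term. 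Grouping the divergence pieces into $g(\eps,t,x)$ and everything else into $f(\eps,t,x)$ gives the claimed decomposition $\caE_1=f+\div g$.

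For the estimates \eqref{eq:fgestimate2}, I rely on two ingredients. Corollary \ref{cor:hier} supplies the uniform bound $\|\partial_{t,x}^\beta v_0^k\|_{L^2(\RR^d)}\le C(k,f,\beta)$ for every $\beta\ne 0$ and every $t\ge 0$, uniformly in $\eps$; this applies because the $\chi_n$'s are positive order, so only genuine derivatives of $v_0^k$ occur, and the $\langle t\rangle$ growth from the $L^2$ bound on $v_0^k$ itself never enters. Then Proposition \ref{prop:reslemnew} converts the two scale $L^2(\RR^d\times\TT^d)$ bounds into $L^2(\RR^d_x)$ bounds after sampling $y=x/\eps$, with the implicit constant depending only on $\|c_{\beta,n}\|_{L^2(\TT^d)}$. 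The explicit factors $\eps^{2k+1}$ and $\eps^{2k+2}$ from $\caE_1$ yield $\|f(\eps,t,\cdot)\|_{L^2}\lesssim\eps^{2k+1}$, while the extra factor of $\eps$ gained from converting $\div_y$ to $\eps\div_x$ gives $\|g(\eps,t,\cdot)\|_{L^2}\lesssim\eps^{2k+2}$; one more time derivative of $g$ adds only a further $\partial_t$ on $v_0^k$, which is still uniformly bounded, so $\|\partial_t g(\eps,t,\cdot)\|_{L^2}\lesssim\eps^{2k+2}$ as well. All bounds are uniform in $t$, as required.

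The only delicate point, which is essentially bookkeeping, is making sure that every non-$L^2$ contribution in $Z_{2k+1}(\eps,t,x,x/\eps)$ and $Z_{2k+2}(\eps,t,x,x/\eps)$ can be cast in the form $\div_x(\cdot)$ with an additional factor of $\eps$, and that no term producing a second time derivative ends up inside $g$ (so that $\partial_t g$ stays controlled by a first derivative of $v_0^k$ and not a third). This follows because the only source of $\grad_y$ acting on $a(y)$ is the single operator $\div_y\,a(y)\grad_x$ sitting inside $\caA_{xy}$, which appears at most once per term and hits $x$-derivatives of $v_0^k$ only; the $\rho(y)\partial_t^2$ pieces do not produce singular $y$-derivatives and feed straight into $f$.
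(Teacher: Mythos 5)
Your proof is correct and follows essentially the same approach as the paper's: you reproduce the paper's formulas for $Z_{2k+1}$ and $Z_{2k+2}$ (which agree with the paper's statement that $\caE_1 = [\rho\partial_t^2 - \caA_{xx}](I-\pi)(\eps^{2k+1}v^k_{2k+1}+\eps^{2k+2}v^k_{2k+2}) - \eps^{2k+1}\caA_{xy}(I-\pi)v^k_{2k+2}$ after substituting $(I-\pi)v_n^k=\chi_n v_0^k$), perform the same chain-rule rewrite of the singular $\caA_{xy}$ contribution into $\eps\,\div(\cdots)$ plus an $L^2$ remainder exactly as in \eqref{eq:calcIIIeps}, and invoke Corollary \ref{cor:hier} together with Proposition \ref{prop:reslemnew} for the uniform-in-$t$ bounds, obtaining the same orders $\eps^{2k+1}$ for $f$ and $\eps^{2k+2}$ for $g,\partial_t g$.
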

\begin{proof}
As for the residual in the non criminal approximation 
write
\begin{align*}
\mathcal{E}_1(\eps, t,x,y)&=
 \eps^{2k+1} Z_{2k+1}(t,x,y) \ +\ \eps^{2k+2} Z_{2k+2}(t,x,y)\\
	&=\Big[ \rho(y) \partial_t^2-
    \mathcal{A}_{xx}
  \Big](I-\pi)
  \Big(
  \eps^{2k+1} v^k_{2k+1} + \eps^{2k+2} v^k_{2k+2}
  \Big)
  \ -\
   \eps^{2k+1}
  \mathcal{A}_{xy}(I-\pi)
v^k_{2k+2}\,.
\end{align*}
Since $(I-\pi)v^k_j=\chi_j\pi v^k_0=$ for $1\leq j\leq 2k+2$ and $\chi_j=\sum_{|\beta|=j}c_{\beta,j}(y)\partial^\beta_{t,x}$, it follows that
$$
\mathcal{E}_1(\eps, t,x,y)\ =\ 
\eps^{2k+1} \left(I(\eps,t,x,y)\ +\
II(\eps,t,x,y)\right),
$$
where
\begin{align*}
I(\eps,t,x,y) \ :=\ \big(\rho(y) \partial_t^2-
    \div_x\, a(y)\,\grad_x
  \big)  &\sum_{|\beta|=2k+1}  c_{\beta,2k+1}(y)\partial^\beta_{t,x}\pi v^k_0(\eps,x,t) 
\cr
+ \,\eps\,\rho(y)\,  \partial^2_t   &\sum_{|\beta|=2k+2}c_{\beta,2k+2}(y)\partial^\beta_{t,x}\pi v^k_0(\eps, t,x),
\end{align*}
\begin{align*}
II(\eps,t,x,y)\, :=\,
- \Big(\div_x a(y)\grad_y + \div_y  & a(y)\grad_x + 
\eps\,\div_xa(y)\grad_x\Big) 
\cr
&
\times
\Big(
\sum_{|\beta|=2k+2}c_{\beta,2k+2}(y)\partial^\beta_{t,x}\pi v^k_0(\eps,t,x)
\Big).
\end{align*} 

Use that $c_{\beta,2k+1}, c_{\beta,2k+2}\in H^1(\TT^d_y)$  to show that
$I(\eps, t,x,y)$ is a sum of terms of the form $\eps^p c(y)v(t,x)$ with $p\geq 0$, $c\in L^2(\TT^d)$.
Corollary \ref{cor:hier}
 implies that each $v$ satisfies, uniformly in  $t\ge 0$,
\begin{equation*}
\|\partial^\alpha_{t,x}v(t,x)\|_{L^2(\RR^d)}
\ \lesssim\
 1
\end{equation*}
for $\alpha\in\N^{1+d}$, including $\alpha=0$. 
Proposition \ref{prop:reslemnew} in the appendix
implies that uniformly for $t\ge  0$,
\begin{equation*}
\|I(\eps, t,x,x/\eps)\|_{L^2(\RR_x^d)}
\ \lesssim\
 1.
\end{equation*}
The second term, $II(\eps, t,x,y)$, involves derivatives of $a(\cdot)$. 
As in \eqref{eq:calcIIIeps}, write
\begin{align*}
II(\eps,t,x,x/\eps)
\ &=\ -\eps\, 
 \div\Big[
 a(x/\eps)
 \sum_{|\beta|=2k+2}
 c_{\beta, 2k+2}(x/\eps)\grad\,
 \partial^\beta_{t,x} v^k_0(\eps,t,x)
 \Big]
 \\
&\qquad + 
a(x/\eps)
\sum_{|\beta|=2k+2}\Big(
\big(\grad_y c_{\beta, 2k+2}\big)
(x/\eps)\cdot\grad\,\partial^\beta_{t,x} v^k_0(\eps,t,x)
\Big)
\\
&:=  \,\div\big(
II^{(1)}( \eps,t,x,x/\eps)\big) \ +\ 
 II^{(2)}(\eps,t,x,x/\eps)\,.
\end{align*} 
As for $I$ it follows that, 
uniformly in $t\ge 0$,
\begin{align*}
\|II^{(1)}(\eps,t,x,x/\eps)\|_{L^2(\RR^d_x)} \ +\  
\|\partial_t II^{(1)}(\eps,t,x,x/\eps)\|_{L^2(\RR^d_x)}&\ \lesssim\
 \eps,\\
\|II^{(2)}(\eps,t,x,x/\eps)\|_{L^2(\RR^d_x)}&
\ \lesssim\
 1.
\end{align*}
Defining $f(\eps,t,x):=\eps^{2k+1}(I + II^{(2)})(\eps,t,x,x/\eps)$ and $g(\eps,t,x):=\eps^{2k+1}II^{(1)}(\eps,t,x,x/\eps)$ completes the  proof of 
Lemma 
\ref{lem:etimatecaE1}.
\end{proof}
{\bf End of proof of Theorem \ref{thm:criminal}.}
Estimate the error terms $\caE_2,\caE_3,\caE_4$ from \eqref{eq:listoferrorterms}. 
Since $v^k_0$ and all of its derivatives are uniformly bounded in $L^2(\RR_x^d)$, 
one has
\begin{equation}
\label{eq:one}
\big\|
\caE_2(\eps, t,x)
\big\|_{L^2(\RR^d)}
\ +\
\big\|
\caE_3(\eps, t,x)
\big\|_{L^2(\RR^d)} \ \lesssim\ \eps^{2k+2}\,.
\end{equation}
The error from $\caE_4$ is smaller.  For any $N$ one has
\begin{equation}
\label{eq:two}
\big\|
\caE_4(\eps, t,x)
\big\|_{L^2(\RR^d)} \ \lesssim\ \eps^{N}\,.
\end{equation}

Theorem \ref{thm:criminal} is a 
consequence of Lemma \ref{lem:etimatecaE1}, 
\eqref{eq:one},
\eqref{eq:two},
and
Proposition \ref{prop:reslemnew} in the appendix.
\hfill
$\Box$
\vskip.3cm

\begin{remark}
In the same way as in part {\bf iii} of Remark \ref{rem:elvis}, one finds a constant $C$ 
depending on $j$ but independent of $\eps\le 1,t\ge 0$ so that
\begin{equation}
\big\|
\partial_t^j\nabla_{t,x} \big(u^\eps(t) \ -\ V^k(\eps, t,x,x/\eps)\big)
\big\|_{L^2(\RR^d_x)}
\ \le \ C\, \eps^{2k+1}\, \langle t\rangle
\,.
\end{equation}

\end{remark}

\section{Sources growing polynomially in time}
\label{sec.polynom}

The error estimates
 for sources with compact support in $0<t<1$
 (Theorem \ref{thm:errorest} 
for the classical case and 
Theorem \ref{thm:criminal} 
in the criminal case)
easily imply similar estimates
\eqref{eq:ClassicLong}
and 
\eqref{eq:CriminalLong}
for sources 
 that grow at most
polynomially in time. Suppose that 
$$
\exists m, \ \forall \alpha,\ 
\exists C,\
\forall t\,,\
\quad
\big\|
\partial_{t,x}^\alpha f(t)
\big\|_{L^2(\RR^d_x)}
\ \le\ 
C\, t^m,
\qquad
f=0\ \   {\rm for}\ \ t\le 0\,.
$$
Use a partition of unity
to write $f=\sum_{j=1}^\infty f_j$ with 
$f_j$ supported in $[j-1,j]$
and 
$$
\exists C,\
\forall \alpha, j, \
\forall  t,
\quad
\sup_{t\in \RR} \big\|\
\partial_{t,x}^\alpha f_j(t)
\big\|_{L^2(\RR^d_x)}
\ \le\
C\
\sup_{j-1\le t\le j}\
\big\|
\partial_{t,x}^\alpha f(t)
\big\|_{L^2(\RR^d_x)}\,.
$$
Denote by $u^\eps_j$ the exact solution with right hand side $f_j$ and by 
$u^\eps_{j, \rm approx}$ either the classical or the criminal approximation.
Then $u^\eps=\sum u_j^\eps$ and 
$u_{\rm approx}^\eps=\sum u_{j, \rm approx}^\eps$.
With
$
e^\eps_j(t)  := 
\big\|
\nabla_{t,x}\big[
u^\eps_j(t)
-
u^\eps_{j, \rm approx}(t)
\big]
\big\|_{L^2(\RR^d_x)}$, 
the triangle inequality implies that the error in energy satisfies
$$
\big\|
\nabla_{t,x}\big[
u^\eps(t)
-
u^\eps_{\rm approx}(t)
\big]
\big\|_{L^2(\RR^d_x)}
\ \le\ 
\sum_{j-1\le t} e^\eps_j(t)
\,.
$$

\subsection{Classical homogenization}

Fix $k$ the index in the classical approximation.  
Apply Theorem \ref{thm:errorest} 
 with initial time shifted to $j-1$.  
Since $f_j$ has size $O(j^m)$, and the error in 
 Theorem \ref{thm:errorest}  depends linearly on $f$,
 \begin{equation*}
\exists C,\ 
\forall j,t,\qquad
e_j^\eps(t)
\ \le \ C \, \eps^{2k+1}\  \big\langle (t-(j-1))_+ \big\rangle^{k+1}
\ j^m\,.
\end{equation*}

This implies
\begin{equation}
\label{eq:ClassicLong}
\big\|
\nabla_{t,x}\big[
u^\eps(t)
-
u^\eps_{\rm approx}(t)
\big]
\big\|_{L^2(\RR^d_x)}
\ \lesssim\
\eps^{2k+1}
\
\int_0^t 
\langle (t-s)_+ \rangle^{k+1}
\ s^m\
ds
\ \lesssim \ 
\eps^{2k+1}
\
\langle t\rangle^{k+2+m}\,.
\end{equation}
Given $N>0$ and $0<\delta< 2$, choosing
$k$ so large that 
$2k+1\ge N+ (k+2+m)(2-\delta)$ guarantees that
 the total error is 
$O(\eps^N)$ for times
$t\le  1/\eps^{2-\delta}$.

\subsection{Criminal path}

Theorem \ref{thm:criminal} shifted in time yields 
$e_j^\eps(t) \lesssim \eps^{2k+1}\, \big\langle (t-(j-1))_+\big\rangle\, j^m$, so
\begin{equation}
\label{eq:CriminalLong}
\big\|
\nabla_{t,x}\big[
u^\eps(t)
-
u^\eps_{\rm approx}(t)
\big]
\big\|_{L^2(\RR^d_x)}
\ \lesssim\
\eps^{2k+1}\
\int_0^t 
\,
\big\langle (t-s)_+\big\rangle
\
s^m\ ds
\ \lesssim\
\eps^{2k+1}
\ \langle t\rangle^{m+2}\,.
\end{equation}
Choosing 
$k$ so large  that 
$2k+1\ge N +N(m+2)$
 shows that the total error is  
$O(\eps^N)$  on intervals
$t\le \eps^{-N}$.

\section{Systems and Schr\"odinger equation}
\label{sec.system}

\subsection{Second order systems}
This section considers systems
of wave equations including  the elastodynamics equations. 
 The unknown $u(t,x):\RR^{d+1}\to \RR^p$ is $\RR^p$ valued.
   Its gradient is a function with values in 
   $\caM_{p,d}  $ the set of  $p\times d$ matrices.
 The scalar product of matrices is
   defined by
  multiplying corresponding components and summing.   
 The systems
have the form
$$
\rho(x/\eps) \partial^2_t u \ -\ \dive\, a(x/\eps)\, \grad\, u \ =\ f(t,x),
\qquad
\rho, a 
\quad
{\rm periodic},
\quad
u=f=0\ \ {\rm for}\ \ t<0\,,
$$
with source term $f:\RR^{1+d}\to \RR^p$. 
Assume that for  ${\rm a.e.}\ y\in \TT^d$ the fourth order tensor $a(y)$ is 
a symmetric map from $\caM_{p,d}$ to itself.
Assume that $a\in L^\infty(\TT^d)$ is 
uniformly positive definite in the sense that 
$$
\exists m_1>0,
\ 
\forall \eta\in \caM_{p,d},
\qquad
a(y) \eta \cdot \eta \ \geq\ m_1 |\eta|^2  
\qquad
{\rm  a.e.} \  y \in \TT^d\,.
$$
Assume  that 
 $\rho(y)\in L^\infty(\TT^d ;\caM_{p,p})$ is  symmetric 
and uniformly positive definite. 

Keep the notations  $\caA_{yy}$, $\caA_{xy}$, $\caA_{xx}$.  These operators now
map  $\RR^p$ valued functions to $\RR^p$ valued
functions. 
They are $p\times p$ matrices of operators. 
The operator $\Pi$ is defined for vector-valued functions as the component wise average over $\TT^d$. 
The operators $\chi_k(y,\partial_{t,x})$ in the next definition
have coefficients that belong to
$H^1(\TT^d;\caM_{p,p})$. The definition is analogous to the scalar case in Definition \ref{def.chi-k}.  
\begin{definition}
Set $\chi_{-1} :=  0$ and $\chi_0 :=  I$, where $I$ is the $p\times p$ 
identity matrix. 
For $k\ge 2$, define operators mapping $\RR^p$-valued functions of 
$t,x$ to $\RR^p$-valued functions of $t,x,y$ by
\begin{equation}
\begin{aligned}
\label{eq:rec1}
 \chi_k(y,\partial_{t,x})  \ :=\  - \caA_{yy}^{-1} (I-\pi) 
\Big[ \caA_{x y} \chi_{k-1}\ +\  (\caA_{xx}-\rho\partial_t^2) \chi_{k-2} \Big] 
.
\end{aligned}
\end{equation}
In the above equation, the composition rule
uses $p\times p$ matrix multiplication. 
\end{definition}

\begin{remark}
Denote by $\bfe_j$ the canonical basis vectors of $\RR^p$.
The product of 
the coefficient of $\partial_{t,x}^\alpha$ in $\chi_k$ 
and 
$\bfe_j$ yields a vector whose components in $H^1(\TT^d)$ 
 are the  usual correctors 
 with index  $\alpha$
 from periodic 
homogenization.
\end{remark}

\begin{definition}
For $n\geq2$
define the $p\times p$ system  of differential
operators $a^*_n$  by,
\begin{equation}
\label{eq:rec33}
a^*_n(\partial_t,\partial_x) \ = \ \pi \Big( 
\caA_{xy} \chi_{n-1}(y,\partial_{t,x}) \ +\ (\caA_{xx}-\partial_t^2\rho) 
\chi_{n-2}(y,\partial_{t,x}) \Big) . 
\end{equation}
Then $a^*_n(\partial_t,\partial_x)$ is  homogeneous  of degree $n$ 
in $(\partial_t,\partial_x)$.  It  contains only even powers of $\partial_t$. 
\end{definition}

\begin{theorem}
\label{thm:rec1}
The cascade of equations for the system wave equation is equivalent to
two families of equations.   The first
\begin{equation}
\label{eq:rec2}
- \sum_{n=2}^{k+2} 
a^*_n(\partial_t,\partial_x)\, \pi \, u_{k+2-n}  \ = \  f \, \delta_{0k} 
\end{equation}
yields wave equations defining  the nonoscillatory parts
in terms of the source and earlier nonoscillatory profiles.
The second
\begin{equation}
\begin{aligned}
\label{eq:rec2bis}
 (I-\pi) u_k  \ = \ \sum_{n=1}^k \chi_n(\partial_t,\partial_x,y) \,\pi \, u_{k-n} (\tau,\xi)\,,
\end{aligned}
\end{equation}
gives the oscillatory  parts in terms of nonoscillatory  parts.
\end{theorem}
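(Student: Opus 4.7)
The plan is to mirror exactly the scalar proofs of Theorems \ref{thm:osc} and \ref{thm:recur1}, checking that each algebraic manipulation survives replacing scalar coefficients by $p\times p$ matrices and that the cell problem remains well posed.

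First I would establish a vector-valued analogue of Lemma \ref{lem:cell}. Using the symmetry and uniform positive definiteness of the fourth-order tensor $a(y)$ on $\caM_{p,d}$, the bilinear form $(u,v)\mapsto \int_{\TT^d} a(y)\grad_y u\cdot \grad_y v\,dy$ is coercive on the quotient $H^1(\TT^d;\RR^p)/\pi H^1(\TT^d;\RR^p)$ by Lax--Milgram combined with the componentwise Poincar\'e--Wirtinger inequality. This yields the bijection $\caA_{yy}:(I-\pi)H^1(\TT^d;\RR^p)\to(I-\pi)H^{-1}(\TT^d;\RR^p)$, together with the identities $\pi \caA_{yy}=0$, $\caA_{yy}\pi=0$ and $\pi\caA_{xy}\pi=0$. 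Once these are in hand, Lemma \ref{lem:wellposedchi} carries over unchanged: by induction on $k$ each $\chi_k(y,\partial_{t,x})\varphi(t,x)\in (I-\pi)H^1(\TT^d;\RR^p)$, and $\chi_k$ is a homogeneous operator of degree $k$ with coefficients in $H^1(\TT^d;\caM_{p,p})$.

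Second, I would derive the oscillatory identity \eqref{eq:rec2bis} by induction in $k$, exactly as in Theorem \ref{thm:osc}. The base cases $(I-\pi)u_0=0$ and $(I-\pi)u_1=\chi_1\pi u_0$ follow from applying $-\caA_{yy}^{-1}(I-\pi)$ to the $\eps^{-2}$ and $\eps^{-1}$ lines of the hierarchy. For the inductive step, apply $-\caA_{yy}^{-1}(I-\pi)$ to \eqref{eq:epsk} (whose form is unchanged for systems), split each $u_j=\pi u_j+(I-\pi)u_j$, substitute the inductive hypothesis for the oscillatory parts at indices $\le k+1$, and collapse the resulting sum via the recursive definition \eqref{eq:rec1} of $\chi_{n+2}$. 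The combinatorics use only associativity of composition and the identity $\pi\caA_{xy}\pi=0$, never scalar commutativity, so the scalar proof transcribes verbatim.

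Third, I would derive \eqref{eq:rec2} by projecting \eqref{eq:epsk} onto constants. Since $\pi\caA_{yy}=0$, one obtains $\pi w_k=\pi\bigl[(\rho(y)\partial_t^2-\caA_{xx})u_k-\caA_{xy}u_{k+1}\bigr]$. Splitting each $u_j$ into its mean and oscillation, inserting formula \eqref{eq:rec2bis} for the oscillations, and regrouping by the index of $\pi u_{k-n}$, the coefficient collapses to $\pi\bigl[\caA_{xy}\chi_{n+1}+(\caA_{xx}-\rho\partial_t^2)\chi_n\bigr]=-a^*_{n+2}(\partial_t,\partial_x)$ by definition \eqref{eq:rec33} (the opposite sign convention in \eqref{eq:rec33} versus the scalar \eqref{eq:recur33} is what produces the minus in \eqref{eq:rec2}). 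Imposing $w_0=f$ and $w_k=0$ for $k\ne 0$ gives \eqref{eq:rec2}.

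The only nontrivial obstacle is the systems cell problem in step one, which is a classical result but must be stated carefully: coercivity uses that $a$ is elliptic as a symmetric operator on $\caM_{p,d}$ together with Poincar\'e--Wirtinger applied componentwise, since $\caA_{yy}$ couples components only through $\grad_y$. Once that is in place the algebra of steps two and three is purely formal and identical to the scalar case; the proof of the vanishing of odd-order homogenized operators (postponed to Theorem \ref{thm:rec2}) is what genuinely requires new combinatorial input and is flagged in the introduction as the delicate point.
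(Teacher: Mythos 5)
Your proposal is correct and takes the same route the paper intends: the paper in fact gives no separate proof of this theorem, regarding it as a direct transcription of the scalar Theorems~\ref{thm:osc} and~\ref{thm:recur1} once the vector-valued cell problem is settled, which is exactly your step one. You also correctly spotted the only non-cosmetic change, namely the opposite sign convention in \eqref{eq:rec33} versus \eqref{eq:recur33}, which is what produces the minus sign in \eqref{eq:rec2}, and you correctly defer the genuine combinatorial novelty to Theorem~\ref{thm:rec2}.
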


\begin{theorem}
\label{thm:rec2}
For any $n\geq1$, the coefficients of $a^*_n(\partial_t,\partial_x)$ are symmetric
 $p\times p$ matrices. 
For any odd $n\geq1$, the homogenized operator of order $n$ vanishes.
That is  for $m\geq1$,
 $a^*_{2m+1}(\partial_t,\partial_x)=0$.
\end{theorem}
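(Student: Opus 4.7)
The plan is to adapt the scalar proof of Theorem \ref{thm:recur2} to matrix-valued operators and to supply an additional argument for the coefficient symmetry, which is trivial in the scalar setting but genuinely new in the systems case.

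First I would reproduce the combinatorial machinery of the scalar proof. With $C_1 := -\caA_{yy}^{-1}(I-\pi)\caA_{xy}$ and $C_2 := -\caA_{yy}^{-1}(I-\pi)(\caA_{xx} - \rho\partial_t^2)$ now $p\times p$ matrix-valued, the recursion \eqref{eq:rec1} (with $\chi_0 = I$) gives $\chi_k = W_k$ where $W_k$ is the sum over all ordered words in $\{C_1, C_2\}$ of weight $\#C_1 + 2\#C_2 = k$, satisfying both $W_k = C_1 W_{k-1} + C_2 W_{k-2}$ and $W_k = W_{k-1} C_1 + W_{k-2} C_2$. Using the matrix symmetry of $\rho$ and the symmetry of $a(y)$ as a map on $\caM_{p,d}$ (equivalently $(a^{ij})^T = a^{ji}$ for the block $p\times p$ pieces), one verifies that $\caA_{yy}$ and $\caA_{xx}-\rho\partial_t^2$ are self-adjoint on $L^2(\TT^d;\RR^p)$ while $\caA_{xy}$ is skew-adjoint, when $\partial_{t,x}$ are frozen as symbols. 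With $D_1, D_2$ defined as in the scalar proof, one has $C_1^* = -D_1$, $C_2^* = D_2$; an induction combining $(AB)^* = B^* A^*$ with the fact that reversal is a bijection on the set of words of fixed weight yields $W_k^* = (-1)^k Z_k$, where $Z_k = D_1 Z_{k-1} + D_2 Z_{k-2}$.

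Second, I would transcribe the final manipulation of the scalar proof. Using the intertwining $W_k G = G Z_k$ with $G := \caA_{yy}^{-1}(I-\pi)$, which follows from $C_i G = G D_i$, the scalar identity $a_k^* = (-1)^k a_k^*$ becomes the matrix identity
\[
(a_n^*)^T \ =\ (-1)^n\, a_n^*,
\]
where $T$ denotes the transpose of the $p\times p$ matrix of scalar differential operators, with no reversal of $\partial_{t,x}$. For odd $n$ this asserts that every coefficient matrix of $a_n^*$ is antisymmetric.

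The main obstacle is the first assertion of the theorem, namely that the coefficient matrices of $a_n^*$ are symmetric for every $n\ge 1$. Once both assertions are in hand, the coefficient matrices for odd $n$ are simultaneously symmetric and antisymmetric, hence vanish, which delivers the second assertion. To establish the symmetry I would derive a ``two-sided'' corrector formula, schematically of the form
\[
a_n^* \ =\ \sum_{\alpha+\beta = n-2}\, \pi\bigl(\chi_\alpha^*\,\mathcal{O}\,\chi_\beta\bigr),
\]
with $\chi_k^*$ the $y$-adjoint of $\chi_k$ and $\mathcal{O}\in \{\rho\partial_t^2 - \caA_{xx},\ \caA_{xy}\}$, in which swapping $\alpha\leftrightarrow\beta$ combined with matrix transpose returns the same expression via the matrix symmetries of $\rho$ and $a$. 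Proving such an identity by induction on $n$, feeding the defining recurrence for $\chi_k$ back on itself and carefully tracking the matrix transposes, is the step I expect to be technically the most delicate; it is the one place where the matrix symmetries of $\rho$ and $a$ are used in an essentially non-formal way that has no analog in the scalar case.
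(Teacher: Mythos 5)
Your $(-1)^n$ argument --- the word combinatorics via $W_k$ and $Z_k$, the adjoint relation $W_k^* = (-1)^k Z_k$ derived from $C_1^*=-D_1$, $C_2^*=D_2$, and the bilinear-form computation leading to $a_n^*\lambda\cdot\mu = (-1)^n\,\lambda\cdot a_n^*\mu$ --- matches the paper's proof of Theorem \ref{thm:rec2} essentially step for step. What you do differently, and correctly, is flag that for odd $n$ this identity gives only that the coefficient matrices are \emph{antisymmetric}, which for $p\ge 2$ does not force them to vanish; to conclude $a_{2m+1}^*=0$ one also needs symmetry, i.e.\ the first assertion of the theorem. It is worth being explicit about what the paper does at this point: its final display is precisely $a_k^*\lambda\cdot\mu = (-1)^k\lambda\cdot a_k^*\mu$, after which it asserts ``For odd $k$ this yields $a_k^*=0$,'' with no separate symmetry argument on the page. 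So the step you call the ``main obstacle'' is one the paper does not visibly supply either --- you appear to have found a genuine elision in the published argument rather than a different route.

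Where your proposal is itself incomplete is exactly the fix you offer. The ``two-sided corrector formula'' $a_n^* = \sum_{\alpha+\beta=n-2}\pi\bigl(\chi_\alpha^*\,\mathcal{O}\,\chi_\beta\bigr)$ is left schematic: you do not specify which choice of $\mathcal{O}$ is paired with which $(\alpha,\beta)$, do not prove the formula, and do not carry out the verification that the $\alpha\leftrightarrow\beta$ swap combined with the $L^2(\TT^d)$-adjoints of $\chi_\alpha,\chi_\beta$ and the symmetries of $\rho$ and $a$ returns the transpose. Balanced integration-by-parts representations of this type are indeed the standard device for exhibiting symmetry of Burnett-type tensors (this is the flavor of the variational argument of \cite{SmCh} the paper cites for the elliptic case), so the direction is plausible, but what you have written is a plan rather than a proof. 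In short: part one of your proposal reproduces the paper's argument and is fine; part two correctly identifies a missing ingredient for the odd-order vanishing but does not actually supply it --- and as far as one can see from the text, neither does the paper.
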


{\bf Proof.}
Introduce the operators $C_1, C_2, D_1, D_2, W_k, Z_k$ as in the proof of 
Theorem \ref{thm:recur2}. Their coefficients are now $p\times p$ matrices
of operators. 
For given vectors $\lambda,\mu \in\RR^p$, since $\chi_0=I$, definition (\ref{eq:rec33}) 
implies that
$$
\begin{aligned}
a^*_k(&\partial_t,\partial_x)\lambda\cdot\mu
\cr & = \int_{\T^d} \Big( \caA_{\xi y} W_{k-1}(\tau,\xi) \chi_0(y) \lambda
+ (\caA_{\xi\xi}-\tau^2\rho) W_{k-2}(\tau,\xi) \chi_0(y)\lambda \Big) \cdot \chi_0(y)\mu \, dy 
\cr
& = \int_{\T^d} \Big(  W_{k-1}(\tau,\xi) \chi_0(y)\lambda\cdot (-\caA_{\xi y} \chi_0(y)\mu)
+ W_{k-2}(\tau,\xi) \chi_0(y)\lambda\cdot ((\caA_{\xi\xi}-\tau^2\rho) \chi_0(y)\mu) \Big)  dy 
\cr
& = (-1)^k \int_{\T^d} \chi_0(y)\lambda\cdot \Big( Z_{k-1}(\tau,\xi) \caA_{\xi y} \chi_0(y) \mu 
+ Z_{k-2}(\tau,\xi) (\caA_{\xi\xi}-\tau^2\rho) \chi_0(y) \mu \Big) dy \, .
\end{aligned}
$$
Integrating  by parts  as 
in the proof of Theorem \ref{thm:recur2} using analogous adjoint relations
yields
\begin{align*}
a^*_k(\partial_t,\partial_x)\lambda\cdot\mu &= (-1)^k \int_{\T^d} \chi_0(y)\lambda\cdot \Big[ \caA_{\xi y} W_{k-1}(\tau,\xi) \chi_0(y) \mu
+ (\caA_{\xi\xi}-\tau^2\rho) W_{k-2}(\tau,\xi) \chi_0(y) \mu \Big] dy
\cr 
&= (-1)^k \lambda\cdot a^*_k(\partial_t,\partial_x)\mu \, .
\end{align*}
For odd $k$  this yields $a_k^*=0$, and, for even $k$, the coefficients
of $a_k^*$ are symmetric. 
\hfill
$\Box$
\vskip.3cm

\subsection{Schr\"odinger  equation}

Consider
the homogenization of Schr\"odinger's equation
 $$
i\,\rho(x/\eps) \, \partial_t u^\eps \, - \, \big(\dive \, a(x/\eps)\, \grad\big)\, u^\eps
\ =\
 f(t,x)\,.
 $$
With only the most minor modifications of the proof
one finds 
an anologue of Theorem \ref{thm:recur2}.
This yields a leap frog structure
and slow secular growth for the two scale expansions.
The analogue of Proposition \ref{prop:energyh-1source}
is that if $g\in L^\infty(\RR;L^2(\RR^d)) \cap L^1(\RR;L^2(\RR^d))$ vanishes for $t\le 0$
and $g_t\in L^1(\RR;L^2(\RR^d))$, then 
the solution of $i \,\rho(x/\eps) \,u_t = \dive (a(x/\eps) \grad u )+ \dive g $
vanishing for $t\le 0$ is uniformly bounded in $L^\infty(\RR;H^1(\RR^d))$.
{The supplementary material to this paper contains a  
proof of this
estimate.}
The classical approximation is accurate for times
$\sim 1/\eps^{-2+\delta}$ for any small $\delta>0$.  The criminal approach
yields approximations with error $\le \eps^N$
for times  $1/\eps^N$.

\appendix
\section{Examples with   maximal secular growth}
\label{sec:maxgrowth}

\subsection{Saturated secular growth}
Theorem \ref{thm:secular} gives an upper bound on the growth in time of the profiles.  
This appendix  shows that the upper bound is attained for 
generic problems in dimension $d=1$. Consider 
$$
\partial_t^2 u \ -\ \partial_x\,\big( a(x/\eps)\, \partial_x u\big) \ =\ f(t,x)\ \in C^\infty_0(]0,1[\times \RR)\,.
$$
with $a(y)$ 1-periodic and not identically constant. Denote
$$
a^*_2(\partial_t,\partial_x) \ :=\ 
\partial_t^2
\ -\ 
c^2\partial_x^2
\ =\ 
\big(
\partial_x -c\partial_t
\big)
\big(
\partial_x +c\partial_t
\big),
$$
the homogenized operator. Then $u_0=u_0(t,x)$ is the solution of 
$a^*_2(\partial_t,\partial_x) u_0=f$ that vanishes for $t<0$.  Therefore for $t>1$ there are
uniquely determined $g_0,h_0\in C^\infty(\RR)$ with $g_0(s)=0$ for $s\gg 1$
and $h_0(s)=0$ for $s\ll -1$ so that for $t>1$ one has
$$
u_0 \ =\ g_0(x-ct) \ +\ h_0(x+ ct)\,.
$$
For most $f$, both $g_0$ and $h_0$ are not identically equal to zero.  This is
true in particular if $f\ge 0$ and not identically equal to zero.   In that
case both $g_0$ and $h_0$ are non negative and not identically zero.
The profile $\pi u_2$ satisfies
of
$$
a^*_2(\partial_t,\partial_x) \pi u_2 \ =\ -a_4^*(\partial_t, \partial_x) u_0\,.
$$
The fourth order homogeneous polynomial
$-a_4^*(\tau,\xi)$ contains no odd powers of $\tau$.  
For $t>1$,  $u_0$ satisfies
$a_2^*(\partial_t,\partial_x)u_0=0$.  In this domain,
replacing systematically $\partial_t^2u_0$ by $c^2\partial_x^2 u_0$ and  
equivalently $\tau^2$ by $c^2\xi^2$ yields
a new polynomial $q(\xi)$ so that 
$-a_4^*(\partial_t,\partial_x)u_0 = q(\partial_x) u_0$.   
As soon as $a(y)$ is not  constant one has
$q(\partial_x) = \gamma \partial_x^4$ with $\gamma\ne 0$ 
(see \cite{COV2}). This shows that
$$
-\,a_4^*(\pm c, 1) \ =\ \gamma \ \ne 0\,.
$$
The equation for $\pi u_2$ for $t>1$ is
\begin{equation}
\label{eq:dal1}
a^*_2(\partial_t,\partial_x) \pi u_2 \ =\ 
(\gamma \partial^4g_0)(x-ct) + (\gamma \partial^4h_0)(x+ c t)
\,.
\end{equation}
Writing $a_2^*(\partial_t,\partial_x) = (\partial_t -c\partial_x)(\partial_t +c\partial_x)$ one 
verifies that the function
$$
z_2(t,x) = \frac{1}{4c^2}\,\Big[(ct+x)\,
(\gamma \partial^3g_0)(x-ct) \ +\ 
(ct-x)\,(\gamma \partial^3h_0)(x+ct)\Big]\,.
$$
satisfies \eqref{eq:dal1}. Choose a cutoff function $\chi\in C^\infty(\RR_t)$ equal to zero
for $t<1/2$ and equal to $1$ for $t\ge 1$.   Then 
$$
a^*_2(\partial_t,\partial_x)\Big( \pi u_2
\ -\ \chi(t) z_2 \Big)
$$
is compactly supported in $0\le t\le1$.  Therefore
$$
\pi u_2\, =\,\chi(t)\, z_2(t,x) + r_2(t,x),
\quad
{\rm and,}
\quad
\forall  \alpha,  \ 
\partial_{t,x}^\alpha r_2\in L^\infty(\RR^{1+1}).
$$
The equation for $\pi u_4$  is
$$
a^*_2(\partial_t,\partial_x)\pi u_4 \ =\ 
-\,a_4^*(\partial_t, \partial_x) \pi u_2\, -\, a_6^*(\partial_t ,\partial_x) \pi u_0\,.
 $$
Only the $\pi u_2$ term on the right is unbounded.   Furthermore, if any of the derivatives
in $a_4^*(\partial_t,\partial_x)$ fall on the factors $ct\pm x$ in $\pi u_2$ the resulting function is 
bounded.  Therefore
\begin{align*}
a_4^*(   &  \partial_t,\partial_x) \Big[
(ct   +x)\,
(\gamma \partial^3g_0)(x-ct) 
+
(ct-x)\,(\gamma \partial^3h_0)(x    +ct)
\Big]
\cr
&=\
(ct+x)\,
(\gamma^2 \partial^7g_0)(x-ct) 
+
(ct-x)
(\gamma^2 \partial^7h_0)(x+ct)\ +\  L^\infty(\RR^{1+1} ).
\end{align*}
Reasoning as above yields  with 
$
\partial_{t,x}^\alpha r_3\in L^\infty(\RR^{1+1})
$,
$$
\pi u_4=
 \frac1{(4c^2)^2}\,\Big[\frac{(ct+x)^2}{2}
((\gamma\partial^3)^2
g_0)(x-ct) +
\frac{(ct-x)^2}{2}
((\gamma\partial^3)^2
h_0(x+ct)\Big]
\ +\
 \langle t\rangle 
r_3\,.
$$
An induction yields
$$
\pi u_{2n} \ =\ 
 \frac{(ct+x)^n}{n!}
g_n(x-ct) \ +\ 
\frac{(ct-x)^n}{n!} h_n(x+ct)
\ +\
 \langle t\rangle^{n-1} r_n\,,
$$
$$
g_n\, :=\, \Big(
\frac{\gamma\partial^3}{4c^2}\Big)^n
g_0\,,
\qquad
h_n\, :=\, \Big(
\frac{\gamma\partial^3}{4c^2}\Big)^n
h_0\,,
\qquad
\partial_{t,x}^\alpha r_n\in L^\infty(\RR^{1+1})\,.
$$
The $u_n$ saturate the upper bounds of
 Theorem  \ref{thm:secular}.

In addition note  that $u_{2n}$ grows with $n$ as the $(3n)^{\rm th}$
derivative of $g_0,h_0$.  This implies that  for generic real analytic $g_0,h_0$,
the series $\sum \eps^n u_n$ is  divergent.

\subsection{Classic homogenization is inaccurate beyond $t=\eps^{-2}$}  
\label{sec:beyond}
One could imagine
that by including  many  correctors, the classical algorithm might 
be accurate for times beyond $\eps^{-2}$.  The example of the preceding  
subsection show that that is not the case. 
For that example the exact solution satisfies $\sup_{0<\eps<1}\sup_{\RR^{1+1}}|u|<\infty$.   For $c>0$, $\delta>0$ as small as one likes define $t_\eps=c/\eps^{2+\delta}$.
To show the inaccuracy of the classical approximation it suffices to show that
for any $0<N\in \ZZ$, 
\begin{equation}
\label{eq:arnold3}
\lim_{\eps\to 0}\
\Big\|
\sum_{j=0}^{2N}
\eps^j \,
u_j(t_\eps)
\Big\|_{L^\infty(\RR_x)} \ =\ \infty\,.
\end{equation}

For the example
one has
for $t$ large 
\begin{equation}
\label{eq:arnold1}
\eps^{2j}\, t^{j}
\ \lesssim\
\|\eps^{2j} \pi u_{2j}(t)\|_{L^\infty(\RR_x)}
\ \lesssim\
\eps^{2j}\, t^{j}
\,.
\end{equation}
For $N$ fixed, formula \eqref{eq:radwanska} implies that 
\begin{equation}
\label{eq:arnold2}
\|\eps^{k} (I-\pi )u_{k}(t)\|_{L^\infty(\RR_x)}
\ \lesssim\
\eps^{k-1}\, t^{k-1}
\,.
\end{equation}
  Therefore, with $\lambda_\eps:=\eps^{-\delta}$ one has
$$
\Big\|
\eps^{2N} (I-\pi) u_{2N}(t_\eps)+\sum_{k=0}^{2N-1} \eps^{k}  u_{k}(t_\eps)
\Big\|_{L^\infty(\RR_x)}
 \lesssim 
\lambda_\eps^{2N-1}
\ \ {\rm and}\ \
\lambda_\eps^{2N}
\lesssim
\big\|
\eps^{2N} \pi u_{2N} (t_\eps)
\big\|_{L^\infty(\RR_x)}.
$$
It follows that with $C_j>0$,
$$
\Big\|
\sum_{j=0}^{2N}
\eps^j \,
u_j(t_\eps)
\Big\|_{L^\infty(\RR_x)} 
\ \ge\ 
C_1\,\lambda_\eps^{2N}
\ -\ 
C_2\,\lambda_\eps^{2N-1}\,.
$$
The limit $\eps\to 0$ yields
\eqref{eq:arnold3}.

\section{Two scale $L^2$ estimate}
\label{sec.estimate}

This appendix contains a proof of a classical estimate for  
oscillating two scale  functions.  It  is used in the error estimates
in Sections \ref{sec:reslem} and
\ref{ss.crim-error}.

\begin{proposition}
\label{prop:reslemnew}  
For each integer $s>d/2$, 
there is a constant $C$ so that for all
 $v\in H^s(\R^d)$ and $c\in L^2(\TT^d)$, 
 \begin{equation}
\label{eq:residlemmanew}
\int_{\RR^d}
\  |\, v(x)\, c(x/\eps)\,|^2\,\ dx
\ \le\
C\,
\|c\|^2_{L^2(\TT^d)}\,
\sum_{|\alpha| \le s}
  \int_{\RR^d}
  |
  (\eps\partial_x)^\alpha v(x)|^2\  dx
\,.
\end{equation}
\end{proposition}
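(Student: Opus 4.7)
The plan is to decompose $\RR^d$ into a disjoint union of $\eps$-sized periodicity cells, estimate the integral on each cell by pulling out the sup-norm of $v$, and then use Sobolev embedding on each cell in a scale-invariant way.

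Let $Q=[0,1]^d$ denote a fundamental domain for $\TT^d$ and write $\RR^d=\bigcup_{k\in\ZZ^d}Q_k$ with $Q_k:=\eps(k+Q)$. On each cell,
\begin{equation*}
\int_{Q_k} |v(x)\,c(x/\eps)|^2\,dx
\ \le\
\|v\|_{L^\infty(Q_k)}^2\int_{Q_k}|c(x/\eps)|^2\,dx
\ =\
\eps^d\,\|v\|_{L^\infty(Q_k)}^2\,\|c\|_{L^2(\TT^d)}^2,
\end{equation*}
the last equality coming from the change of variables $y=x/\eps-k$ and the periodicity of $c$.

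The next step is the scale-invariant Sobolev embedding. Define $w(y):=v(\eps(k+y))$ for $y\in Q$. Since $s>d/2$, the standard Sobolev embedding $H^s(Q)\hookrightarrow L^\infty(Q)$ yields a constant $C$, independent of $k$ and $\eps$, with
\begin{equation*}
\|v\|_{L^\infty(Q_k)}^2
\ =\
\|w\|_{L^\infty(Q)}^2
\ \le\
C^2\sum_{|\alpha|\le s}\int_{Q}|\partial_y^\alpha w(y)|^2\,dy.
\end{equation*}
Because $\partial_y^\alpha w(y)=\eps^{|\alpha|}(\partial_x^\alpha v)(\eps(k+y))$, reversing the change of variables gives
\begin{equation*}
\int_{Q}|\partial_y^\alpha w|^2\,dy
\ =\
\eps^{-d}\int_{Q_k}|(\eps\partial_x)^\alpha v(x)|^2\,dx,
\end{equation*}
so that
\begin{equation*}
\eps^d\,\|v\|_{L^\infty(Q_k)}^2
\ \le\
C^2\sum_{|\alpha|\le s}\int_{Q_k}|(\eps\partial_x)^\alpha v(x)|^2\,dx.
\end{equation*}

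Combining the two displays and summing over $k\in\ZZ^d$ yields the desired inequality \eqref{eq:residlemmanew}. The only point that needs care is the uniformity of the Sobolev embedding constant as $k$ ranges over $\ZZ^d$ and $\eps$ varies; this is handled automatically because the embedding is applied on the fixed domain $Q$ after the rescaling, and the rescaling absorbs $\eps$ into the operator $(\eps\partial_x)^\alpha$. I do not anticipate any serious obstacle; the only subtlety is tracking the $\eps^d$ factors so that they cancel exactly between the volume of $Q_k$ and the Jacobian in the Sobolev estimate.
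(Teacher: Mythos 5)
Your proof is correct and follows essentially the same path as the paper's: partition $\RR^d$ into $\eps$-sized cells, bound the cell integral by $\|v\|_{L^\infty}^2 \cdot \eps^d\|c\|_{L^2(\TT^d)}^2$, apply Sobolev embedding on the rescaled unit cell to $w(y)=v(\eps(k+y))$, and sum. The only (purely cosmetic) difference is that you translate before scaling, which makes the uniformity of the Sobolev constant in $k$ slightly more transparent; the $\eps^d$ bookkeeping cancels exactly as you observe.
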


{\bf Proof.} 
Denote by $Y:=[0,1)^d$ the unit box. 
Then  $\RR^d$ is  a disjoint union of   boxes $Y_k:= k+ Y$,
$\RR^d=\bigcup_{k\in\Z^n} Y_k$.
Scaling by $\eps$ yields
$\RR^d=\bigcup_{k\in\Z^n} \eps\,Y_k$.
Sobolev's inequality for $Y_k$ reads
\begin{equation}
\label{eq:sobolevnew}
 \big\|
 w
 \big\|_{L^\infty(Y_k)}^2
 \ \leq 
 C(s,d)\sum_{|\alpha| \le s}
  \int_{Y_k}
  |\partial^\alpha_y w(y)|^2\  dy\,,
  \qquad
  w\in H^s(Y_k)\,.
\end{equation}
When $y\in Y_k$, $x:=\eps y\in \eps Y_k$.
For $v\in H^s(\eps Y_k)$,
apply \eqref{eq:sobolevnew} 
 to  $w(y) := v( \eps y )\in H^s(Y_k)$ to find
\begin{equation}
 \label{eq:epsBnew}
 \| v\|_{L^\infty(\eps Y_k)}^2 
 \leq C(s,d) \,
 \eps^{-d}
 \sum_{|\alpha| \le s}
  \int_{\eps Y_k}
  |
  (\eps\partial_x)^\alpha v(x)|^2\  dx\,.
  \end{equation}
  Estimate, using \eqref{eq:epsBnew} in the last line,
  \begin{align*}
  \int_{\eps Y_k}
  \left|
   v(x)\   c(x/\eps)
   \right|^2\ dx
  \ &\le \
	\big\|
  v
  \big\|^2_{L^\infty(\eps Y_k)}
   \int_{\eps Y_k} \,
  |c(x/\eps)|^2
  \ dx \
  \cr
  \ &\leq   
  \ C(s,d)\,
\eps^{-d} \,   \sum_{|\alpha| \le s}
 \int_{\eps Y_k}
  |
  (\eps\partial_x)^\alpha v(x)|^2\  dx\,\eps^d\,\big\|
  c\big\|_{L^2(\TT^d)}^2\ .
\end{align*}
 Summing over $k$ yields 
 \eqref{eq:residlemmanew}.
 \hfill
\qed
\vskip.2cm

\section{Stability estimate for the wave equation}
\label{sec.stability}
This appendix contains an  estimate for wave equations with sources in $L^\infty_{\text{loc}}(\RR;H^{-1}(\RR^d))$. The weak regularity in $x$ is 
compensated by additional regularity in time.
The residuals in the criminal and the non criminal approximation are of that form. 
For completeness the proof is included.  The systems case is exactly 
analogous.

\begin{proposition}
\label{prop:energyh-1source}
 Suppose that $0<m_1<m_2<\infty$ are real numbers and
   $a, \rho\in L^\infty(\RR^d)$ satisfy
 $
 m_1\le  a,\rho \le  m_2.
 $
 There is a constant $C>0$ depending only on $m_1,m_2$ so that 
 for all  $f\in L^1_{\text{loc}}(\RR;L^2(\mathbb{R}^d))$ 
 and  $g\in L^\infty_{\text{loc}}(\RR;L^2(\RR^d;\RR^d))$ with 
$\partial_t g\in L^1_{\text{loc}}(\RR;L^2(\mathbb{R}^d;\RR^d))$ and $f=g=0$ for $t\leq 0$
the solution of
\begin{equation*}
\big(\rho \,\partial_t^2 \ -\  \div(a\,\grad)\big)\, u \ =\  f\  +\  \div\, g \,,
\qquad
u=0 \quad {\rm for}\quad t \le 0\,,
\end{equation*}
 satisfies for all  $t>0$,
\begin{align}
\label{eq:bob}
\|\nabla_{t,x} u(t)\|_{L^2(\mathbb{R}^d)}
 \leq
C
\Big[
\big\|
f
\big\|_{L^1([0,t]\,;\, L^2(\RR^d)) }
+
\big\|
\partial_t g
\big\|_{L^1([0,t];L^2(\RR^d))}
+ \big\|g\|_{L^\infty([0,t];L^2(\RR^d))}
\Big].
\end{align} 
\end{proposition}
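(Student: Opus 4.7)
The plan is a standard energy argument, but the critical point is that the distributional term $\operatorname{div} g$ cannot be paired directly with $\partial_t u$ and integrated by parts in $x$, because that would require $\nabla\partial_t u \in L^1_t L^2_x$, which is not given. The only usable regularity of $g$ is in time, so one must integrate by parts in $t$ rather than in $x$ when handling the divergence source.

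First I would introduce the natural energy
\begin{equation*}
E(t) \ :=\ \frac{1}{2}\int_{\RR^d}\Bigl(\rho(x)|\partial_t u(t,x)|^2 \ +\ a(x)\nabla u(t,x)\cdot\nabla u(t,x)\Bigr)\,dx,
\end{equation*}
which, thanks to $m_1\le\rho,a\le m_2$, is equivalent to $\tfrac12\|\nabla_{t,x}u(t)\|_{L^2(\RR^d)}^2$ up to constants depending only on $m_1,m_2$. Multiplying the equation by $\partial_t u$ and integrating (formally for now) over $\RR^d$ yields
\begin{equation*}
\frac{dE}{dt}\ =\ \int_{\RR^d} f\,\partial_t u\,dx \ -\ \int_{\RR^d} g\cdot\nabla\partial_t u\,dx.
\end{equation*}
The first term is controlled by $\|f(t)\|_{L^2}\,E(t)^{1/2}$. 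For the second, I would integrate by parts in time: define the modified energy
\begin{equation*}
\widetilde E(t)\ :=\ E(t)\ +\ \int_{\RR^d} g(t,x)\cdot\nabla u(t,x)\,dx,
\end{equation*}
so that, using $-\int g\cdot\nabla\partial_t u = -\tfrac{d}{dt}\int g\cdot\nabla u + \int\partial_t g\cdot\nabla u$, one obtains
\begin{equation*}
\frac{d\widetilde E}{dt}\ =\ \int_{\RR^d}f\,\partial_t u\,dx \ +\ \int_{\RR^d}\partial_t g\cdot\nabla u\,dx.
\end{equation*}
Each term on the right is bounded by $C\bigl(\|f(t)\|_{L^2}+\|\partial_t g(t)\|_{L^2}\bigr)\,E(t)^{1/2}$.

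Next I would compare $\widetilde E$ and $E$. By Cauchy--Schwarz and the $a$-coercivity,
\begin{equation*}
\Bigl|\int_{\RR^d}g\cdot\nabla u\,dx\Bigr|\ \le\ \|g(t)\|_{L^2}\|\nabla u(t)\|_{L^2}\ \le\ \tfrac12 E(t) \ +\ C\|g(t)\|_{L^2}^2,
\end{equation*}
so $\tfrac12 E(t) - C\|g(t)\|_{L^2}^2 \le \widetilde E(t) \le \tfrac32 E(t) + C\|g(t)\|_{L^2}^2$. Since $u=g=0$ for $t\le 0$ and $\partial_t g\in L^1_{\text{loc}}(\RR;L^2)$ forces $g\in C(\RR;L^2)$ so that $g(0)=0$, integrating $d\widetilde E/dt$ from $0$ to $t$ and setting $M(t):=\sup_{0\le s\le t}E(s)^{1/2}$ gives
\begin{equation*}
E(t)\ \le\ 2\widetilde E(t) \ +\ 2C\|g(t)\|_{L^2}^2 \ \le\ C' M(t)\bigl(\|f\|_{L^1_s L^2}+\|\partial_t g\|_{L^1_s L^2}\bigr) \ +\ 2C\|g\|_{L^\infty_s L^2}^2.
\end{equation*}
Taking the supremum in $t$ yields a quadratic inequality of the form $M^2 \le AM + B$ with $A=C'(\|f\|_{L^1 L^2}+\|\partial_t g\|_{L^1 L^2})$ and $B=2C\|g\|_{L^\infty L^2}^2$, whence $M \le A+\sqrt{B}$, which is exactly \eqref{eq:bob}.

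The main obstacle, and the only non-mechanical point, is rigorously justifying the energy identity when the only available regularity of the right-hand side is $f+\operatorname{div} g\in L^1_{\text{loc}}(\RR;H^{-1}(\RR^d))$. I would handle this by a density/regularization argument: mollify $f$ and $g$ in both $t$ and $x$, obtain smooth solutions $u_n$ for which the computation above is legitimate and the estimate \eqref{eq:bob} holds with constants independent of $n$, then pass to the limit using weak-$\ast$ compactness in $L^\infty_t(H^1_x)\cap W^{1,\infty}_t(L^2_x)$ and uniqueness of the finite-energy solution of the wave equation with source in $L^1_{\text{loc}}(\RR;H^{-1})$. The bounds on $\widetilde E$ and the equivalence with $E$ transfer to the limit since all quantities involved depend continuously on the data in the relevant norms.
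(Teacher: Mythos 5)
Your proof is correct and follows essentially the same route as the paper's: the decisive step — integrating by parts in $t$ (rather than in $x$ alone) to trade $\div g\cdot\partial_t u$ for $\partial_t g\cdot\nabla u$ plus a boundary term — is identical, and your modified energy $\widetilde E(t)=E(t)+\int g\cdot\nabla u\,dx$ is just a name for the quantity the paper obtains after its two integrations by parts. The only small difference is the closing step: the paper bounds $E(t)\le C(\text{norms})\,M(t)^{1/2}$ and divides by $M(t)^{1/2}$ after evaluating at the time where $E$ attains its supremum, whereas you absorb the boundary term by Young's inequality to get a quadratic $M^2\le AM+B$ and solve it; both are equally valid, and the regularization step you invoke is the same one the paper uses (``approximating by smooth functions'') to justify the formal energy identity.
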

{\bf Proof.} 
Approximating $a,\rho,f,g$ by smooth functions it suffices to prove the estimate
for solutions and right hand side that belong to  $H^s([0,t]\times\RR^d)$ for all $s,t>0$
with a  constant that  depends only on the $m_j$.
Introduce continuous functions
$$
E(t)\ :=\ 
\frac12\int_{\R^d} \left( \rho(x)\,  \big|\partial_t u(t,x)  \big|^2\ +\
a(x)\, \big|  \nabla u(t,x) \big|^2 \right) \,dx
\,,
\quad 
{\rm and}
\quad
M(t)  := \sup_{0\le \ut\le t}  E(\ut).
$$
Testing the equation with $\partial_t u$
yields the standard energy identity
$$
E(t)\ =\ 
\int_0^t
\int_{\RR^d}
\partial_tu
\big(
f + \div g\big)\ dx\,dt\,.
$$
Estimate the first of the two summands
on the right as
\begin{align*}
\Big|
\int_0^t\int_{\RR^d} f\partial_t u \ dxdt\Big|
\le
\big\|
f
\big\|_{L^1([0,t]\,;\, L^2(\RR^d))}\
\big\|
\partial_t u
\big\|_{L^\infty([0,t]\,;\, L^2(\RR^d))}
 \le  C\,  \big\|
f
\big\|_{L^1([0,t]\,;\, L^2(\RR^d))}\,
M(t)^{1/2}
\end{align*}

with a constant depending only on the $m_j$.
For the second summand two integrations by parts yield
\begin{align*}
\int_0^t\int_{\RR^d} \div\, g\ \partial_t u \ dxdt
\ &=\
-
\int_0^t\int_{\RR^d}  g\cdot \grad\,\partial_t u \ dxdt
\cr
\ &=\ 
\int_0^t\int_{\RR^d}  \partial_tg\cdot \grad\, u \ dxdt
\ -\
\int_{\RR^d} g(t,x)\cdot \grad\, u(t,x)\ dx\,.
\end{align*}
Therefore
\begin{align*}
\Big|
\int_0^t\int_{\RR^d} \div g\ \partial_t u \ dxdt
&\Big|
\ \le \
\cr
\big\|
\partial_t g
\big\|_{L^1(]0,t[ ;  L^2(\RR^d)  ) }\,
\big\|
& \grad \, u
\big\|_{L^\infty([0,t]\,;\, L^2(\RR^d) ) }
+
\| g\|_{L^\infty([0,t];L^2(\RR^d ) ) }
\|\grad\, u\|_{L^\infty( [0,t];L^2(\RR^d)) }
\cr
\ &\le \ C\, 
\Big( \big\|
\partial_t g
\big\|_{L^1(]0,t[\,;\, L^2(\RR^d))}
+ \big\|g\|_{L^\infty([0,t];L^2(\RR^d))}
\Big)
M(t)^{1/2}\,.
\end{align*}
Combining yields
\begin{equation}
\label{eq:C2}
E(t)\ \le \ C
\Big(
\big\|
f
\big\|_{L^1([0,t]\,;\, L^2(\RR^d)) }
+
\big\|
\partial_t g
\big\|_{L^1([0,t]\,;\, L^2(\RR^d))}
+ \big\|g\|_{L^\infty([0,t];L^2(\RR^d))}
\Big)
M(t)^{1/2}\,.
\end{equation}
For each $t>0$, choose  $0<\ut\le t$ so that $E(\ut)= M(t)$.   
Estimate \eqref{eq:C2} at time $\ut$ yields
\begin{align*}
M(t)\  &\le
\ C
\Big(
\big\|
f
\big\|_{L^1([0,\ut]\,;\, L^2(\RR^d)) }
+
\big\|
\partial_t g
\big\|_{L^1([0,\ut]\,;\, L^2(\RR^d))}
+ \big\|g\|_{L^\infty([0,\ut];L^2(\RR^d))}
\Big)
M(\ut)^{1/2}
\cr
&\le
C
\Big(
\big\|
f
\big\|_{L^1([0,t]\,;\, L^2(\RR^d)) }
+
\big\|
\partial_t g
\big\|_{L^1([0,t]\,;\, L^2(\RR^d))}
+ \big\|g\|_{L^\infty([0,t];L^2(\RR^d))}
\Big)
M(t)^{1/2}\,.
\end{align*}
If $M(t)\ne 0$, 
dividing by $M(t)^{1/2}$ yields
\eqref{eq:bob}.
  If $M(t)=0$, \eqref{eq:bob} holds with $C=0$.
\hfill
$\Box$
\vskip.3cm

\textbf{Acknowledgments.} {G.A. is a member of the DEFI project at INRIA Saclay Ile-de-France.}

\end{document}